\title[Ribbon structures]{Ribbon structures of the Drinfeld center of a finite tensor category}
\author[K.~Shimizu]{Kenichi Shimizu}
\email{kshimizu@shibaura-it.ac.jp}
\address{Department of Mathematical Sciences \\
  Shibaura Institute of Technology \\
  307 Fukasaku, Minuma-ku, Saitama-shi, Saitama 337-8570, Japan.}
\date{}
\numberwithin{equation}{section}
\newtheorem{counter}{}[section]
\theoremstyle{definition}
\newtheorem{definition}         [counter]{Definition}
\newtheorem*{notation*}         {Notation}
\theoremstyle{plain}
\newtheorem{lemma}              [counter]{Lemma}
\newtheorem{theorem}            [counter]{Theorem}
\newtheorem{corollary}          [counter]{Corollary}
\newtheorem*{theorem*}          {Theorem}
\theoremstyle{remark}
\newtheorem{remark}             [counter]{Remark}
\newcommand{\xarrlen}{2em}
\newcommand{\xarr}[1]{\xrightarrow{\makebox[\xarrlen]{$\scriptstyle #1$}}}
\newcommand{\xarq}[1]{\xrightarrow{\quad #1 \quad}}
\newcommand{\Kappa}{\mathord{\mathrm{K}}}
\newcommand{\bfk}{\mathds{k}} 
\newcommand{\din}{\mathtt{i}} 
\newcommand{\id}{\mathrm{id}}
\newcommand{\op}{\mathrm{op}}
\newcommand{\Hom}{\mathrm{Hom}}
\newcommand{\Nat}{\mathrm{Nat}}
\newcommand{\radj}{\mathrm{ra}}
\newcommand{\rradj}{\mathrm{rra}}
\newcommand{\ladj}{\mathrm{la}}
\newcommand{\lladj}{\mathrm{lla}}
\newcommand{\Fun}{\mathrm{Fun}}
\newcommand{\Lex}{\mathrm{Lex}}
\newcommand{\lmod}[1]{{#1}\mbox{-}\mathrm{mod}}
\newcommand{\bimod}[2]{{#1}\mbox{-}\mathrm{mod}\mbox{-}{#2}}
\newcommand{\fdVec}{\mathbf{Vec}}
\newcommand{\unitobj}{\mathord{\mathds{1}}}
\newcommand{\eval}{\mathrm{ev}}
\newcommand{\coev}{\mathrm{coev}}
\newcommand{\rev}{\mathrm{rev}}
\newcommand{\env}{\mathrm{env}}
\newcommand{\catactl}{\mathbin{\ogreaterthan}}
\newcommand{\catactr}{\mathbin{\olessthan}}
\newcommand{\iHom}{\underline{\mathrm{Hom}}}
\newcommand{\Inv}{\mathrm{Inv}}
\newcommand{\Nak}{\mathds{N}}
\newcommand{\relmod}{\mu}
\newcommand{\modobj}{D}
\newcommand{\relmodb}{\boldsymbol{\relmod}}
\newcommand{\modobjb}{\boldsymbol{\modobj}}
\begin{document}

\maketitle

\begin{abstract}
  We classify the ribbon structures of the Drinfeld center $\mathcal{Z}(\mathcal{C})$ of a finite tensor category $\mathcal{C}$. Our result generalizes Kauffman and Radford's classification result of the ribbon elements of the Drinfeld double of a finite-dimensional Hopf algebra. Our result implies that $\mathcal{Z}(\mathcal{C})$ is a modular tensor category in the sense of Lyubashenko if $\mathcal{C}$ is a spherical finite tensor category in the sense of Douglas, Schommer-Pries and Snyder.
\end{abstract}

\section{Introduction}

A braided monoidal category is a monoidal category $\mathcal{B}$ equipped with an isomorphism $\sigma_{X,Y}: X \otimes Y \to Y \otimes X$ satisfying the hexagon axiom, and a ribbon category is a braided rigid monoidal category $\mathcal{B}$ equipped with a {\em ribbon structure} (also called a {\em twist}), that is, a natural isomorphism $\theta: \id_{\mathcal{B}} \to \id_{\mathcal{B}}$ satisfying
\begin{equation*}
  \theta_{X \otimes Y} = (\theta_X \otimes \theta_Y) \circ \sigma_{Y, X} \circ \sigma_{X,Y}
  \quad \text{and} \quad
  (\theta_X)^* = \theta_{X^*}
\end{equation*}
for all $X, Y \in \mathcal{B}$, where $(-)^*$ is the duality functor; see, {\it e.g.}, \cite{MR3242743}. These notions are used, for example, to formulate and construct several kinds of topological invariants or, more generally, topological quantum field theory.

Given a rigid monoidal category $\mathcal{C}$, we have a braided rigid monoidal category $\mathcal{Z}(\mathcal{C})$ called the {\em Drinfeld center} of $\mathcal{C}$. In this paper, we classify the ribbon structures of the Drinfeld center of a {\em finite tensor category} in the sense of \cite{MR2119143}. A typical example of a finite tensor category is the category $H\mbox{-mod}$ of finite-dimensional left modules over a finite-dimensional Hopf algebra $H$. As is well-known, the Drinfeld center of $H\mbox{-mod}$ is identified with the category of finite-dimensional modules over the Drinfeld double of $H$. Our result can be thought of as a categorical generalization of Kauffman and Radford's classification result of the ribbon elements of the Drinfeld double of a finite-dimensional Hopf algebra \cite{MR1231205}.

Etingof, Nikshych and Ostrik \cite{MR2097289} have introduced the {\em distinguished invertible object} $\modobj$ of a finite tensor category $\mathcal{C}$. Following \cite{MR2097289}, there is a natural isomorphism $\delta_X: \modobj \otimes X \to X^{****} \otimes \modobj$ ($X \in \mathcal{C}$). As this result generalizes the celebrated Radford $S^4$-formula, we call $\delta$ the {\em Radford isomorphism}. Our classification result claims that the ribbon structures of $\mathcal{Z}(\mathcal{C})$ are parametrized by `square roots' of the Radford isomorphism ({\it cf}. \cite[Theorem 3]{MR1231205}).

Our result yields a new example of `non-semisimple' modular tensor categories in the sense of Lyubashenko \cite{MR1324034,MR1352517,MR1354257,MR1862634}. If $\mathcal{B}$ is a braided finite tensor category, then the coend $F = \int^{X \in \mathcal{B}} X \otimes X^*$ has a canonical paring $\omega: F \otimes F \to \unitobj$ defined in terms of the braiding. We say that $\mathcal{B}$ is {\em non-degenerate} if $\omega$ is. A {\em modular tensor category} \cite{MR1862634} is a non-degenerate ribbon finite tensor category. The braided finite tensor category $\mathcal{Z}(\mathcal{C})$ is always non-degenerate by \cite{MR3996323} and \cite[Proposition 8.6.3]{MR3242743}, but it does not have a ribbon structure in general. Our result determines when $\mathcal{Z}(\mathcal{C})$ admits a ribbon structure, and hence is modular. For example, $\mathcal{Z}(\mathcal{C})$ is a modular tensor category if $\mathcal{C}$ is spherical in the sense of Douglas, Schommer-Pries and Snyder \cite[Definition 4.5.2]{2013arXiv1312.7188D}. This answers Open Problem (7) of \cite[Section 6]{MR2681261}.

\subsection*{Organization of this paper}

This paper is organized as follows: In Section~\ref{sec:preliminaries}, we collect basics on monoidal categories and their modules from \cite{MR1712872,MR3242743,MR3934626} and fix notations used throughout in this paper.

In Section~\ref{sec:bimodule-cat-center}, for two tensor functors $F, G: \mathcal{B} \to \mathcal{C}$ between finite tensor categories $\mathcal{B}$ and $\mathcal{C}$, the category $\mathcal{Z}(F, G)$ is introduced as a special case of the center of a bimodule category. Spelling out the definition, an object of this category is a pair $(V, \sigma)$ consisting of an object $V \in \mathcal{C}$ and a natural transformation
\begin{equation*}
  \sigma_X: V \otimes F(X) \to G(X) \otimes V \quad (X \in \mathcal{B})
\end{equation*}
satisfying certain conditions. The Drinfeld center is the case where $F$ and $G$ are the identity functor. Unlike the Drinfeld center, the category $\mathcal{Z}(F, G)$ does not have a tensor product. Though, for three tensor functors $F, G, H: \mathcal{B} \to \mathcal{C}$, one can define the tensor product $\otimes: \mathcal{Z}(G, H) \times \mathcal{Z}(F, G) \to \mathcal{Z}(F, H)$. These categories, as well as this tensor product, are useful to formulate our classification result.

The main result of Section~\ref{sec:bimodule-cat-center} is a monadic description of $\mathcal{Z}(F, G)$. Given tensor functors $F, G: \mathcal{B} \to \mathcal{C}$, one can define an algebra $A_{F,G} \in \mathcal{C} \boxtimes \mathcal{C}^{\rev}$ as a coend of a certain functor. The category $\mathcal{C} \boxtimes \mathcal{C}^{\rev}$ acts on $\mathcal{C}$ from the left in a natural way, and hence the algebra $A_{F,G}$ defines a monad on $\mathcal{C}$. We see that the Eilenberg-Moore category of this monad can be identified with $\mathcal{Z}(F, G)$.

In Section~\ref{sec:relative-modulus}, we first reformulate a result of \cite{MR2097289} with the techniques introduced in Section \ref{sec:bimodule-cat-center}. Let $\mathcal{C}$ be a finite tensor category, and let $\iHom$ be the internal Hom functor of the $\mathcal{C} \boxtimes \mathcal{C}^{\rev}$-module category $\mathcal{C}$.
We consider the algebra $A = \iHom(\unitobj,\unitobj)$ in $\mathcal{C} \boxtimes \mathcal{C}^{\rev}$.
It is known that there is an equivalence
\begin{equation*}
  \Kappa: \mathcal{C} \xrightarrow{\quad \approx \quad}
  (\text{the category of $A$-modules in $\mathcal{C} \boxtimes \mathcal{C}^{\env}$}),
  \quad V \mapsto (V \boxtimes \unitobj) \otimes A
\end{equation*}
of left module categories over $\mathcal{C} \boxtimes \mathcal{C}^{\rev}$. By the results of Section~\ref{sec:bimodule-cat-center}, we see that this equivalence induces an equivalence
\begin{equation*}
  \mathcal{Z}(\id_{\mathcal{C}}, S^4) \approx  (\text{the category of $A^{**}$-$A$-bimodules in $\mathcal{C} \boxtimes \mathcal{C}^{\env}$})
\end{equation*}
of categories, where $S = (-)^*$ is the left duality functor of $\mathcal{C}$. We define the {\em Radford object} to be the object $\modobjb_{\mathcal{C}} \in \mathcal{Z}(\id_{\mathcal{C}}, S^4)$ corresponding to the $A^{**}$-$A$-bimodule $A^*$ (Definition~\ref{def:rad-obj}). This object capsules the categorical analogue of the Radford $S^4$-formula given in \cite{MR2097289}; see Appendix~\ref{sec:def-rad-obj}.

For a tensor functor $F: \mathcal{B} \to \mathcal{C}$ whose right adjoint is exact, the relative modular object $\relmod_F \in \mathcal{C}$ is defined by $\relmod_F = F^{\rradj}(\unitobj)$, where $F^{\rradj}$ is a double right adjoint of $F$ \cite{MR3569179}.
As noted in \cite{MR3569179}, there is a canonical isomorphism $\gamma_F$ such that, in our notation, the pair $\relmodb_F := (\chi_F, \gamma_F)$ belongs to the category $\mathcal{Z}(F, F)$. The main result of Section~\ref{sec:relative-modulus} is that there is an isomorphism $F(\modobjb_{\mathcal{B}}) \cong \modobjb_{\mathcal{C}} \otimes \relmodb_F$ in $\mathcal{Z}(F, F S^4)$ (see Theorem~\ref{thm:relative-modulus} for the precise meaning). This result is established by utilizing the Nakayama functor introduced in \cite{MR4042867}.

The main result of this paper is stated and proved in Section~\ref{sec:main-result}.
We say that a pivotal structure $j$ of $\mathcal{Z}(\mathcal{C})$ is {\em ribbon} (Definition~\ref{def:ribbon-pivot}) if $u^{-1} j$, where $u$ is the Drinfeld isomorphism, is a ribbon structure of $\mathcal{Z}(\mathcal{C})$. Since every ribbon structure of $\mathcal{Z}(\mathcal{C})$ is obtained from a ribbon pivotal structure of $\mathcal{Z}(\mathcal{C})$ in this way, we actually classify ribbon pivotal structures of $\mathcal{Z}(\mathcal{C})$ in this paper.

Pivotal structures of $\mathcal{Z}(\mathcal{C})$ were classified in \cite{2016arXiv160805905S}.
Let $\mathrm{Aut}(\mathcal{C})$ denote the class of tensor autoequivalences of $\mathcal{C}$.
Given $F \in \mathrm{Aut}(\mathcal{C})$, we denote by $\widetilde{F}$ the braided tensor autoequivalence on $\mathcal{Z}(\mathcal{C})$ induced by $F$. In \cite{2016arXiv160805905S}, a bijection
\begin{equation*}
  \Theta: \Inv(\mathcal{Z}(F, G)) \to \Nat_{\otimes}(\widetilde{F}, \widetilde{G})
\end{equation*}
is constructed for $F, G \in \mathrm{Aut}(\mathcal{C})$ (see Theorem~\ref{thm:nat-tensor-bij}), where $\Inv(\mathcal{Z}(F, G))$ is the set of isomorphism classes of invertible objects of $\mathcal{Z}(F, G)$. Pivotal structures of $\mathcal{Z}(\mathcal{C})$ are classified by this bijection for $F = \id_{\mathcal{C}}$ and $G = S^2$.

The object $\modobjb_{\mathcal{Z}(\mathcal{C})}$ can be expressed by the bijection $\Theta$. We consider the relative modular object $\relmodb_U$ of the forgetful functor $U: \mathcal{Z}(\mathcal{C}) \to \mathcal{C}$. Theorem~\ref{thm:relative-modulus} expresses $\relmodb_U$ in terms $\modobjb_{\mathcal{Z}(\mathcal{C})}$ and $\modobjb_{\mathcal{C}}$. On the other hand, $\relmodb_U$ is expressed in terms of the half-braiding (Corollary~\ref{cor:rel-mod-obj-Dri}). By comparing these two expressions, we find that $\modobjb_{\mathcal{Z}(\mathcal{C})}$ is given by $\modobjb_{\mathcal{Z}(\mathcal{C})} = (\unitobj_{\mathcal{Z}(\mathcal{C})}, \Theta(\modobjb_{\mathcal{C}}))$ (Theorem~\ref{thm:Rad-obj-Dri-cen}).

The condition for a pivotal structure of $\mathcal{Z}(\mathcal{C})$ to be ribbon can be written in terms of $\modobjb_{\mathcal{Z}(\mathcal{C})}$ (see Lemma~\ref{lem:ribbon-pivot-finite}). By using the functorial property of the map $\Theta$, we see that the bijection $\Theta$ restricts to a bijection between the set
\begin{equation*}
  \{ [\boldsymbol{\beta}] \in \Inv(\mathcal{Z}(\id_{\mathcal{C}}, S_{\mathcal{C}}^2)) \mid S_{\mathcal{C}}^2(\boldsymbol{\beta}) \otimes \boldsymbol{\beta} \cong \modobjb_{\mathcal{C}} \}
\end{equation*}
of `square roots' of $\modobjb_{\mathcal{C}}$ and the set of ribbon pivotal structures of $\mathcal{Z}(\mathcal{C})$ (Theorem~\ref{thm:classification}). This generalizes a result of Kauffman and Radford to the setting of finite tensor categories \cite{MR1231205}.
Some applications are discussed at the end of this paper.

\subsection*{Acknowledgment}

The author is supported by JSPS KAKENHI Grant Number 16K17568 and JP20K03520.

\section{Preliminaries: Monoidal categories and their modules}
\label{sec:preliminaries}

\subsection{Adjunctions, ends and coends}

Our standard reference for category theory is \cite{MR1712872}.
The opposite category of a category $\mathcal{C}$ is denoted by $\mathcal{C}^{\op}$.
Given a functor $F$, we denote by $F^{\ladj}$ and $F^{\radj}$ a (fixed) left and right adjoint of $F$, respectively, provided that they exist. The following result is well-known:

\begin{lemma}
  Let $\mathcal{V}$, $\mathcal{X}$ and $\mathcal{Y}$ be categories. If a functor $F: \mathcal{X} \to \mathcal{Y}$ admits a left adjoint, then there are natural isomorphisms
  \begin{align}
    \label{eq:nat-adjunctions-1}
    \Nat(A, F B) & \cong \Nat(F^{\ladj} A, B), \\
    \label{eq:nat-adjunctions-2}
    \Nat(C F, D) & \cong \Nat(C, D F^{\ladj})
  \end{align}
  for functors $A : \mathcal{V} \to \mathcal{Y}$, $B : \mathcal{V} \to \mathcal{X}$,
  $C : \mathcal{Y} \to \mathcal{V}$ and $D : \mathcal{X} \to \mathcal{V}$.
\end{lemma}

The isomorphism \eqref{eq:nat-adjunctions-1} is induced by the adjunction isomorphism, and the latter one is constructed as follows: Let $\eta$ and $\varepsilon$ be the unit and the counit of the adjunction $F \dashv F^{\ladj}$, respectively. Given a natural transformation $\xi : C F \to D$, we define the natural transformation $\xi^{\sharp} : C \to D F^{\ladj}$ by
\begin{equation*}
  \xi^{\sharp}_Y = \Big(C(Y)
  \xrightarrow{\quad C(\eta_Y) \quad}
  C(F F^{\ladj}(Y))
  \xrightarrow{\quad \xi_{F^{\ladj}(Y)} \quad} D F^{\ladj}(Y) \Big)
\end{equation*}
for $Y \in \mathcal{Y}$. Given $\zeta \in \Nat(C, D F^{\ladj})$, we define $\zeta^{\flat} : C F \to D$ by
\begin{equation*}
  \zeta^{\flat}_X = \Big(
  C F(X) \xrightarrow{\quad \zeta_{F(X)} \quad} D F^{\ladj} F(X)
  \xrightarrow{\quad D(\varepsilon_X) \quad} D(X)
  \Big)
\end{equation*}
for $X \in \mathcal{X}$. The isomorphism \eqref{eq:nat-adjunctions-2} is given by the maps $\xi \mapsto \xi^{\sharp}$ and $\zeta \mapsto \zeta^{\flat}$.

The reader is assumed to be familiar with basic results and the standard integral notation for (co)ends \cite[IX]{MR1712872}. In this paper, the following formula for (co)ends will be important:

\begin{lemma}
  \label{lem:coend-adj}
  Let $T : \mathcal{A}^{\op} \times \mathcal{B} \to \mathcal{C}$ and $F : \mathcal{A} \to \mathcal{B}$ be functors, where $\mathcal{A}$, $\mathcal{B}$ and $\mathcal{C}$ be categories. We suppose that $F$ admits a left adjoint. Then we have
  \begin{equation}
    \label{eq:adj-coend-1}
    \int^{X \in \mathcal{A}} T(X, F(X))
    \cong \int^{Y \in \mathcal{B}} T(F^{\ladj}(Y), Y)
  \end{equation}
  meaning that the coend on the left-hand side exists if and only if that on the right-hand side exists and, if either of them exists, then they are canonically isomorphic. Similarly, we have a canonical isomorphism
  \begin{equation}
    \label{eq:adj-coend-2}
    \int_{X \in \mathcal{A}} T(F(X), X)
    \cong \int_{Y \in \mathcal{B}} T(Y, F^{\ladj}(Y)),
  \end{equation}
  in an analogous meaning as the case of coends.
\end{lemma}

The isomorphism \eqref{eq:adj-coend-1} is noted in \cite[Lemma 3.9]{MR2869176}, and the isomorphism \eqref{eq:adj-coend-2} is obtained by the dual argument.
For reader's convenience, we note how to obtain \eqref{eq:adj-coend-1}. Let $C_1$ and $C_2$ be coends of the left and the right-hand side of \eqref{eq:adj-coend-1}, respectively. We denote their universal dinatural transformation by
\begin{equation*}
  i^{(1)}_X : T(X, F(X)) \to C_1
  \quad \text{and} \quad
  i^{(2)}_Y : T(F^{\ladj}(Y), Y) \to C_2
\end{equation*}
for $X \in \mathcal{A}$ and $Y \in \mathcal{B}$. The isomorphism $\phi : C_1 \to C_2$ of the above lemma and its inverse are characterized by the equations
\begin{gather*}
  \phi \circ i^{(1)}_X = i^{(2)}_{F(X)} \circ T(\varepsilon_X, \id_{F(X)})
  \quad \text{and} \quad
  \phi^{-1} \circ i^{(2)}_Y = i^{(1)}_{F^{\ladj}(Y)} \circ T(\id_{F^{\ladj}(Y)}, \eta_Y^{})
\end{gather*}
for $X \in \mathcal{A}$ and $Y \in \mathcal{B}$, respectively.

\subsection{Monoidal categories}
\label{subsec:mon-cat}

A {\em monoidal category} \cite[VII.1]{MR1712872} is a category $\mathcal{C}$ endowed with a functor $\otimes: \mathcal{C} \times \mathcal{C} \to \mathcal{C}$ (called the {\em tensor product}), an object $\unitobj \in \mathcal{C}$ (called the {\em unit object}), and natural isomorphisms
\begin{equation*}
  (X \otimes Y) \otimes Z \cong X \otimes (Y \otimes Z)
  \quad \text{and} \quad
  \unitobj \otimes X \cong X \cong X \otimes \unitobj
  \quad (X, Y, Z \in \mathcal{C})
\end{equation*}
satisfying the pentagon and the triangle axioms. If these natural isomorphisms are identities, then $\mathcal{C}$ is said to be {\em strict}. In view of the Mac Lane coherence theorem, we may assume that all monoidal categories are strict.

\subsubsection{Monoidal functors}

Let $\mathcal{C}$ and $\mathcal{D}$ be monoidal categories. A {\em monoidal functor} \cite[XI.2]{MR1712872} from $\mathcal{C}$ to $\mathcal{D}$ is a functor $F: \mathcal{C} \to \mathcal{D}$ endowed with a morphism $F_0: \unitobj \to F(\unitobj)$ in $\mathcal{C}$ and a natural transformation
\begin{equation*}
  F_2(X, Y): F(X) \otimes F(Y) \to F(X \otimes Y) \quad (X, Y \in \mathcal{C})
\end{equation*}
satisfying certain axioms. A monoidal functor $F$ is said to be {\em strong} if $F_2$ and $F_0$ are invertible, and said to be {\em strict} if $F_2$ and $F_0$ are identities.

Given two monoidal functors $F$ and $G$ from $\mathcal{C}$ to $\mathcal{D}$, we denote by $\Nat_{\otimes}(F, G)$ the set of monoidal natural transformations from $F$ to $G$ (see \cite[XI.2]{MR1712872} for the definition of monoidal natural transformations).

\subsubsection{Convention on duals}

We fix our convention for dual objects in a monoidal category. Let $L$ and $R$ be objects of a monoidal category $\mathcal{C}$, and let $\varepsilon: L \otimes R \to \unitobj$ and $\eta: \unitobj \to R \otimes L$ be morphisms of $\mathcal{C}$. We say that $(L, \varepsilon, \eta)$ is a {\em left dual object} of $\mathcal{C}$ and $(R, \varepsilon, \eta)$ is a {\em right dual object of $L$} if the equations
\begin{equation*}
  (\varepsilon \otimes \id_L) \circ (\id_L \otimes \eta) = \id_L
  \quad \text{and} \quad
  (\id_R \otimes \varepsilon) \circ (\eta \otimes \id_R) = \id_R
\end{equation*}
are satisfied. A monoidal category is said to be {\em rigid} if every its object has a left dual object and a right dual object.

Now we assume that $\mathcal{C}$ is a rigid monoidal category. For each $X \in \mathcal{C}$, we fix a left dual object $(X^*, \eval_X, \coev_X)$ and a right dual object $({}^*\!X, \eval'_X, \coev'_X)$ of $X$. Then the assignment $X \mapsto X^*$ extends to a strong monoidal functor $(-)^*: \mathcal{C}^{\op} \to \mathcal{C}^{\rev}$, which we call the {\em left duality functor} of $\mathcal{C}$. Here, $\mathcal{C}^{\rev}$ is the monoidal category obtained from $\mathcal{C}$ by reversing the order of the tensor product. The {\em right duality functor} ${}^*(-): \mathcal{C}^{\op} \to \mathcal{C}^{\rev}$ of $\mathcal{C}$ is defined analogously.

It is known that ${}^*(-)$ is a quasi-inverse of $(-)^*$. In this paper, we assume that $(-)^*$ and ${}^*(-)$ are strict monoidal functors and mutually inverse to each other (see, {\it e.g.}, \cite[Lemma 5.4]{MR3314297} for a discussion). Thus we have
$(X \otimes Y)^* = Y^* \otimes X^*$,
${}^*(X^*) = X = ({}^*\!X)^*$, etc.

\subsubsection{Duality transformation}

Let $F : \mathcal{C} \to \mathcal{D}$ be a strong monoidal functor between rigid monoidal categories, and let $X \in \mathcal{C}$ be an object. The object $F(X^*)$ is a left dual object of $F(X)$ in a natural way, and hence there is a canonical isomorphism $\zeta_X : F(X)^* \to F(X^*)$ by the uniqueness of a left dual object.
The family $\zeta = \{ \zeta_X \}$ is an isomorphism of monoidal functors, which we call the {\em duality transformation} of $F$ ({\it cf}. \cite[Section 1]{MR2381536}).

\subsection{Modules over a monoidal category}

Let $\mathcal{C}$ be a monoidal category. A {\em left $\mathcal{C}$-module category} is a category $\mathcal{M}$ endowed with a functor $\ogreaterthan: \mathcal{C} \times \mathcal{M} \to \mathcal{M}$ (called the {\em action}) and natural isomorphisms
\begin{equation*}
  (X \otimes Y) \ogreaterthan M \to X \ogreaterthan (Y \ogreaterthan M)
  \quad \text{and} \quad
  \unitobj \ogreaterthan M \to M
  \quad (X, Y \in \mathcal{C}, M \in \mathcal{M})
\end{equation*}
satisfying certain equations similar to the axioms for monoidal categories.
A right $\mathcal{C}$-module category, a $\mathcal{C}$-bimodule category and related notions are defined analogously; see \cite[Chapter 7]{MR3242743} and \cite{MR3934626}. Here we only review basic results on left $\mathcal{C}$-module categories.

Let $\mathcal{M}$ and $\mathcal{N}$ be left $\mathcal{C}$-module categories.
A {\em lax left $\mathcal{C}$-module functor} from $\mathcal{M}$ to $\mathcal{N}$ is a functor $F: \mathcal{M} \to \mathcal{N}$ equipped with a natural transformation
\begin{equation*}
  \xi_{X,M}: X \catactl F(M) \to F(X \catactl M) \quad (X \in \mathcal{C}, M \in \mathcal{M})
\end{equation*}
compatible with the associativity and the unit isomorphisms. We also introduce the dual notion: An {\em oplax left $\mathcal{C}$-module functor} from $\mathcal{M}$ to $\mathcal{N}$ is a functor $G : \mathcal{M} \to \mathcal{N}$ equipped with a natural transformation
\begin{equation*}
  \zeta_{X,M} : G(X \catactl M) \to X \catactl G(M) \quad (X \in \mathcal{C}, M \in \mathcal{M})
\end{equation*}
such that, in a word, $G^{\op} : \mathcal{M}^{\op} \to \mathcal{N}^{\op}$ is a lax left $\mathcal{C}^{\op}$-module functor.
In this paper, adjoints of (op)lax module functors are considered.
A left adjoint of a lax left $\mathcal{C}$-module functor is not such a functor in general, but an oplax one. More precisely, we have:

\begin{lemma}[{\cite[Lemma 2.11]{MR3934626}}]
  \label{lem:mod-func-adj}
  Let $\mathcal{M}$ and $\mathcal{N}$ be left $\mathcal{C}$-module categories, let $R: \mathcal{M} \to \mathcal{N}$ be a functor, and let $L: \mathcal{N} \to \mathcal{M}$ be a left adjoint of $R$ with unit $\eta: \id_{\mathcal{N}} \to R L$ and counit $\varepsilon: L R \to \id_{\mathcal{M}}$.
  \begin{itemize}
  \item [(1)] If $(R, \xi)$ is a lax left $\mathcal{C}$-module functor, then $L$ is an oplax left $\mathcal{C}$-module functor with the structure morphism $\zeta$ given by
    \begin{equation*}
      \zeta_{X,N} = \varepsilon_{X \catactl L(N)} \circ L(\xi_{X, L(N)}) \circ L(\id_X \catactl \eta_{N})
      \quad (X \in \mathcal{C}, N \in \mathcal{N}).
    \end{equation*}
  \item [(2)] If $(L, \zeta)$ is an oplax left $\mathcal{C}$-module functor, then $R$ is a lax left $\mathcal{C}$-module functor with the structure morphism $\xi$ given by
    \begin{equation*}
      \xi_{X,M} = R(\id_X \catactl \varepsilon_{M}) \circ R(\zeta_{X, R(M)}) \circ \eta_{X \catactl R(M)}
      \quad (X \in \mathcal{C}, M \in \mathcal{M}).
    \end{equation*}
  \end{itemize}
  The above two constructions give a one-to-one correspondence between
  the set of lax left $\mathcal{C}$-module functor structures on $R$ and
  the set of oplax left $\mathcal{C}$-module functor structures on $L$.
\end{lemma}

We note that the natural transformations $\xi_{X,-}$ and $\zeta_{X,-}$ for $X \in \mathcal{C}$ appearing in the above lemma correspond to each other via the bijection
\begin{equation}
  \label{eq:mod-fun-adj-1}
  \Nat(T_X R, R T'_X)
  \mathop{\cong}^{\eqref{eq:nat-adjunctions-1}} \Nat(L T_X R, T'_X)
  \mathop{\cong}^{\eqref{eq:nat-adjunctions-2}} \Nat(L T_X, T'_X L),
\end{equation}
where $T_X = X \catactl \id_{\mathcal{N}}$ and $T'_X = X \catactl \id_{\mathcal{M}}$.

We say that an (op)lax $\mathcal{C}$-module functor is {\em strong} if its structure morphism is invertible. The following lemma is also important:

\begin{lemma}[{\cite[Lemma 2.10]{MR3934626}}]
  \label{lem:mod-func-strong}
  Suppose that $\mathcal{C}$ is rigid. Then all lax $\mathcal{C}$-module functors and all oplax $\mathcal{C}$-module functors are strong.
\end{lemma}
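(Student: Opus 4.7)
The plan is to construct an explicit two-sided inverse of the structure morphism using rigidity; the oplax case will follow by the analogous construction with right duals. Let $(F, \xi)$ be a lax left $\mathcal{C}$-module functor with $\xi_{X,M} : X \catactl F(M) \to F(X \catactl M)$. Since $X$ admits a left dual $(X^*, \eval_X, \coev_X)$, I would define a candidate inverse $\zeta_{X,M} : F(X \catactl M) \to X \catactl F(M)$ as the composite
\begin{align*}
  F(X \catactl M)
  & \xrightarrow{\coev_X \catactl \id}
  X \catactl X^* \catactl F(X \catactl M)
  \xrightarrow{\id_X \catactl \xi_{X^*, X \catactl M}}
  X \catactl F(X^* \catactl X \catactl M) \\
  & \xrightarrow{\id_X \catactl F(\eval_X \catactl \id_M)}
  X \catactl F(M),
\end{align*}
where strictness identifies $(X \otimes X^*) \catactl N$ with $X \catactl (X^* \catactl N)$ and $\unitobj \catactl M$ with $M$.

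To verify $\zeta_{X,M} \circ \xi_{X,M} = \id_{X \catactl F(M)}$, I would first apply naturality of the natural transformation $\coev_X \catactl (-)$ to push the leading $\xi_{X,M}$ past the $\coev_X$-insertion. This puts two $\xi$'s in series as $\xi_{X^*, X \catactl M} \circ (\id_{X^*} \catactl \xi_{X,M})$, which by the associativity axiom of a lax module functor equals $\xi_{X^* \otimes X, M}$. Then naturality of $\xi$ in the first argument with respect to $\eval_X : X^* \otimes X \to \unitobj$, combined with the unit axiom $\xi_{\unitobj, M} = \id$, rewrites the tail as $\id_X \catactl \eval_X \catactl \id_{F(M)}$. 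Altogether the composite simplifies to $\bigl( (\id_X \otimes \eval_X) \circ (\coev_X \otimes \id_X) \bigr) \catactl \id_{F(M)}$, which is the identity by the triangle identity for $(X, X^*)$.

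The reverse composition $\xi_{X,M} \circ \zeta_{X,M} = \id_{F(X \catactl M)}$ is handled by a mirror argument: naturality of $\xi$ in the $\mathcal{M}$-variable slides $\xi_{X,M}$ inside, where it combines with $\id_X \catactl \xi_{X^*, X \catactl M}$ into $\xi_{X \otimes X^*, X \catactl M}$, and naturality in the first variable with respect to $\coev_X$ reduces the result to $F$ applied to the same triangle-identity composite on $X \catactl M$. For an oplax left $\mathcal{C}$-module functor, one runs the analogous construction with the right dual $({}^*X, \eval'_X, \coev'_X)$ of $X$; every step carries over with arrows reversed. The main bookkeeping obstacle is keeping associators and unitors straight, but since one works strictly by Mac Lane coherence, the verification reduces to the two module functor axioms and the triangle identities for duals.
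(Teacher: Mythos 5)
Your proof is correct and gives the standard argument: build a two-sided inverse of $\xi_{X,M}$ from $\coev_X$ and $\eval_X$, then reduce both composites to a triangle identity using naturality of $\xi$, the associativity axiom $\xi_{X\otimes Y,M}=\xi_{X,Y\catactl M}\circ(\id_X\catactl\xi_{Y,M})$, and the unit axiom $\xi_{\unitobj,M}=\id$; the oplax case follows analogously with ${}^*X$. The paper does not prove this lemma but simply cites \cite[Lemma 2.10]{2014arXiv1406.4204D}, where essentially the same construction appears, so your argument matches the intended one.
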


Thus, when $\mathcal{C}$ is rigid, lax $\mathcal{C}$-module functors and oplax $\mathcal{C}$-module functors may be simply called $\mathcal{C}$-module functors (but sometimes the adjective `op(lax)' is used to specify the direction of the structure morphism). Lemmas~\ref{lem:mod-func-adj} and \ref{lem:mod-func-strong} imply that the class of $\mathcal{C}$-module functors is closed under taking an adjoint.

Now we suppose that $\mathcal{C}$ is rigid. Let $F : \mathcal{M} \to \mathcal{N}$ be a lax left $\mathcal{C}$-module functor with structure morphism $\xi$.
We note how the inverse of $\xi$ is given:
First, we remark that there is a natural isomorphism
\begin{equation}
  \label{eq:adj-X-action}
  \Hom_{\mathcal{M}}(M, X \catactl M')
  \cong \Hom_{\mathcal{M}}(X^* \catactl M, M')
  \quad (X \in \mathcal{C}, M, M' \in \mathcal{M}),
\end{equation}
that is, $T_{X^*}$ is left adjoint to $T_X$. By the proof of \cite[Lemma 2.10]{MR3934626}, we see that the inverse of $\xi_{X,-} : T_X F \to F T_X$ corresponds to $\xi_{X^*,-}$ via the bijection
\begin{equation}
  \label{eq:mod-fun-adj-2}
  \Nat(F T_{X}, T_{X} F)
  \xrightarrow[\cong]{\ \eqref{eq:nat-adjunctions-1}, \eqref{eq:nat-adjunctions-2} \ }
  \Nat((T_{X})^{\ladj} F, F (T_{X})^{\ladj})
  \mathop{=}^{\eqref{eq:adj-X-action}}
  \Nat(T_{X^*} F, F T_{X^*}).
\end{equation}

If $F$ has a left adjoint, then $F^{\ladj}$ is an oplax left $\mathcal{C}$-module functor by the structure morphism $\zeta$ given by Lemma~\ref{lem:mod-func-adj}. By the above discussion, the inverse of $\xi_{X,-}$ corresponds to $\zeta_{X^*,-}$ via the bijection
\begin{equation*}
  \Nat(F T_{X}, T_{X} F)
  \mathop{\cong}^{\eqref{eq:mod-fun-adj-2}}
  \Nat(T_{X^*} F, F T_{X^*})
  \mathop{\cong}^{\eqref{eq:mod-fun-adj-1}}
  \Nat(F^{\ladj} T_{X^*}, T_{X^*} F^{\ladj}).
\end{equation*}
In general, a morphism $\nu \in \Nat(F T_{X}, T_{X} F)$ is sent to
\begin{equation*}
  F^{\ladj} T_{X^*}
  \xrightarrow{\quad \cong \quad}
  (T_{X} F)^{\ladj}
  \xrightarrow{\quad \nu^{\ladj} \quad}
  (F T_{X})^{\ladj}
  \xrightarrow{\quad \cong \quad}
  T_{X^*} F^{\ladj}.
\end{equation*}
via the above bijection. Here, the first and the third arrows represent the canonical isomorphism $(P \circ Q)^{\ladj} \cong Q^{\ladj} \circ P^{\ladj}$ for composable functors $P$ and $Q$ admitting left adjoints. By the above discussion, we obtain:

\begin{lemma}
  Under the above assumptions, the following diagram commutes:
  \begin{equation*}
    \begin{tikzcd}[column sep = 3em]
      F^{\ladj} \circ T_{X^*}
      \arrow[r, equal, "\eqref{eq:adj-X-action}"]
      \arrow[d, "\zeta_{X^*,-}"']
      & F^{\ladj} \circ (T_{X})^{\ladj}
      \arrow[r, "\cong"]
      & (T_{X} \circ F)^{\ladj}
      \arrow[d, leftarrow, "(\xi_{X,-})^{\ladj}"] \\
      T_{X^*} \circ F^{\ladj}
      \arrow[r, equal, "\eqref{eq:adj-X-action}"]
      & (T_{X})^{\ladj} \circ F^{\ladj}
      \arrow[r, "\cong"]
      & (F \circ T_X)^{\ladj}
    \end{tikzcd}
  \end{equation*}
\textsf{}\end{lemma}

By using the above lemma twice, we obtain:

\begin{lemma}
  \label{lem:mod-func-double-adj}
  Let $F: \mathcal{M} \to \mathcal{N}$ be a lax left $\mathcal{C}$-module functor with structure morphism $\xi$.
  Suppose that a double left adjoint $F^{\lladj} := (F^{\ladj})^{\ladj}$ exists, and let
  \begin{equation*}
    \omega_{X,M} : X \catactl F^{\lladj}(M) \to F^{\lladj}(X \catactl M)
    \quad (X \in \mathcal{C}, M \in \mathcal{M})
  \end{equation*}
  be the structure morphism of $F^{\lladj}: \mathcal{M} \to \mathcal{N}$ as a lax left $\mathcal{C}$-module functor. Then, for every object $X \in \mathcal{C}$, the following diagram commutes:
  \begin{equation*}
    \begin{tikzcd}[column sep = 3em]
      T_{X^{**}} \circ F^{\lladj}
      \arrow[d, "\omega_{X^{**}, -}"']
      \arrow[r, equal, "\eqref{eq:adj-X-action}"]
      & (T_{X})^{\lladj} \circ F^{\lladj}
      \arrow[r, "\cong"]
      & (T_{X} \circ F)^{\lladj}
      \arrow[d, "(\xi_{X,-})^{\lladj}"] \\
      F^{\lladj} \circ T_{X^{**}}
      \arrow[r, equal, "\eqref{eq:adj-X-action}"]
      & F^{\lladj} \circ (T_X)^{\lladj}
      \arrow[r, "\cong"]
      & (F \circ T_X)^{\lladj}
    \end{tikzcd}
  \end{equation*}
\end{lemma}

\subsection{Finite abelian categories}
\label{subsec:fin-ab-cat}

Throughout this paper, we work over an algebraically closed field $\bfk$ of arbitrary characteristic. By an algebra over $\bfk$, we always mean an associative and unital algebra over $\bfk$. Given an algebra $R$ over $\bfk$, we denote by $R\mbox{-mod}$ the category of finite-dimensional left $R$-modules. A {\em finite abelian category} over $\bfk$ is a $\bfk$-linear category that is equivalent to $A\mbox{-mod}$ for some finite-dimensional algebra $A$ over $\bfk$.

Given two finite abelian categories $\mathcal{M}$ and $\mathcal{N}$, we denote by $\Lex(\mathcal{M}, \mathcal{N})$ the category of $\bfk$-linear left exact functors from $\mathcal{M}$ to $\mathcal{N}$.
Let $A$ and $B$ be two finite-dimensional algebras, and let $\bimod{A}{B}$ denote the category of finite-dimensional $A$-$B$-bimodules.
By the Eilenberg-Watts theorem and the tensor-Hom adjunction, we see that the functor
\begin{equation}
  (\bimod{A}{B})^{\op} \to \Lex(\lmod{A}, \lmod{B}),
  \quad M \mapsto \Hom_{A}(M, -)
\end{equation}
is an equivalence of categories. Let $F: \mathcal{M} \to \mathcal{N}$ be a $\bfk$-linear functor between finite abelian categories $\mathcal{M}$ and $\mathcal{N}$. The above equivalence implies that $F$ has a left adjoint if and only if $F$ is left exact. By applying this argument to $F^{\op}$, we see that $F$ has a right adjoint if and only if it is right exact.

The above equivalence also implies that a $\bfk$-linear functor $\mathcal{M} \to \fdVec := \lmod{\bfk}$ is representable if and only if it is left exact. Thus, given objects $V \in \fdVec$ and $M \in \mathcal{M}$, an object $V \otimes_{\bfk} M \in \mathcal{M}$ can be defined by the natural isomorphism
\begin{equation}
  \label{eq:Vect-action-def}
  \Hom_{\mathcal{M}}(V \otimes_{\bfk} M, M') \cong \Hom_{\bfk}(V, \Hom_{\mathcal{M}}(M, M'))
\end{equation}
for $M' \in \mathcal{M}$. Every finite abelian category $\mathcal{M}$ is a left $\fdVec$-module category by the operation $(V, M) \mapsto V \otimes_{\bfk} M$, and every $\bfk$-linear functor between finite abelian categories is a $\fdVec$-module functor in a natural way.

\subsection{Finite tensor categories and their modules}
\label{subsec:FTCs}

A {\em finite tensor category} \cite{MR2119143} is a rigid monoidal category $\mathcal{C}$ such that $\mathcal{C}$ is a finite abelian category, the tensor product functor $\otimes: \mathcal{C} \times \mathcal{C} \to \mathcal{C}$ is $\bfk$-linear in each variable, and the unit object $\unitobj \in \mathcal{C}$ is a simple object.

Let $\mathcal{C}$ be a finite tensor category. A {\em finite left $\mathcal{C}$-module category} is a left $\mathcal{C}$-module category $\mathcal{M}$ such that $\mathcal{M}$ is a finite abelian category over $\bfk$ and the action of $\mathcal{C}$ on $\mathcal{M}$ is $\bfk$-linear and right exact in each variable (this condition implies that the action is {\em exact} in each variable \cite[Corollary 2.26]{MR3934626}). A finite right $\mathcal{C}$-module category and a finite $\mathcal{C}$-bimodule category are defined analogously.

Now let $\mathcal{M}$ be a left $\mathcal{C}$-module category. An algebra $A$ in $\mathcal{C}$ ($=$ a monoid object in $\mathcal{C}$ \cite{MR1712872}) defines a monad $A \catactl \id_{\mathcal{M}}$ on $\mathcal{M}$. We define the category ${}_A \mathcal{M}$ of {\em left $A$-modules in $\mathcal{M}$} to be the Eilenberg-Moore category of this monad.
It is well-known that ${}_A \mathcal{M}$ is a finite abelian category if $\mathcal{M}$ is a finite left $\mathcal{C}$-module category.

If $\mathcal{N}$ is a right $\mathcal{C}$-module category, then the category $\mathcal{N}_A$ of right $A$-modules in $\mathcal{N}$ is defined analogously. If $B$ is an algebra in $\mathcal{C}$ and $\mathcal{L}$ is a $\mathcal{C}$-bimodule category, then the category ${}_A \mathcal{L}_B$ of $A$-$B$-bimodules in $\mathcal{C}$ is defined. The categories $\mathcal{N}_A$ and ${}_A \mathcal{L}_B$ are finite abelian categories provided that $\mathcal{N}$ and $\mathcal{L}$ are finite.

\subsection{Internal Hom functors}

Let $\mathcal{C}$ be a finite tensor category, and let $\mathcal{M}$ be a finite left $\mathcal{C}$-module category. We fix an object $M \in \mathcal{M}$. Then the functor $\id_{\mathcal{C}} \catactl M$ from $\mathcal{C}$ to $\mathcal{M}$ is $\bfk$-linear and exact, and hence it has a right adjoint. We denote a right adjoint of $\id_{\mathcal{C}} \catactl M$ by $\iHom(M, -)$. Namely, there is an isomorphism
\begin{equation*}
  \Hom_{\mathcal{C}}(V, \iHom(M, N)) \cong \Hom_{\mathcal{M}}(V \catactl M, N)
\end{equation*}
natural in $V \in \mathcal{C}$ and $N \in \mathcal{N}$. The assignment $(M, N) \mapsto \iHom(M, N)$ extends to a functor $\mathcal{M}^{\op} \times \mathcal{M} \to \mathcal{C}$, which we call the {\em internal Hom functor} of $\mathcal{M}$.

Since the functor $\id_{\mathcal{C}} \catactl M$ is a left $\mathcal{C}$-module functor in an obvious way, its right adjoint is also a left $\mathcal{C}$-module functor. Namely, there is a natural isomorphism
\begin{align}
  \label{eq:int-Hom-iso-1}
  \iHom(M, X \catactl N) & \cong X \otimes \iHom(M, N)
\end{align}
for $X \in \mathcal{C}$ and $N \in \mathcal{M}$.

\section{The Drinfeld center and its variants}
\label{sec:bimodule-cat-center}

\subsection{The Drinfeld center and its variants}

Let $\mathcal{C}$ be a finite tensor category, and let $\mathcal{M}$ be a finite $\mathcal{C}$-bimodule category.
The {\em center} of $\mathcal{M}$, denoted by $\mathcal{Z}(\mathcal{M})$, is the category defined as follows:
An object of this category is a pair $(M, \sigma_M)$ consisting of an object $M \in \mathcal{M}$ and a natural transformation
\begin{equation*}
  \sigma_M(X) : M \catactr X \to X \catactl M
  \quad (X \in \mathcal{C})
\end{equation*}
such that the diagrams
\begin{equation*}
  \begin{tikzcd}[column sep = 5em]
    M \catactr (X \otimes Y)
    \arrow[r, "\sigma_{M}(X \otimes Y)"]
    \arrow[d, "\cong"']
    & (X \otimes Y) \catactl M \\
    (M \catactr X) \catactr Y
    \arrow[d, "\sigma_M(X) \catactr \id"']
    & X \catactl (Y \catactl M)
    \arrow[u, "\cong"'] \\
    (X \catactl M) \catactr Y
    \arrow[r, "\cong"]
    & X \catactl (M \catactr Y)
    \arrow[u, "\id \catactl \sigma_M(Y)"']
  \end{tikzcd}
  \qquad
  \begin{tikzcd}
    \unitobj \catactl M
    \arrow[dd, "\sigma_M(\unitobj)"']
    \arrow[rd, "\cong"] \\
    & M \\
    M \catactr \unitobj
    \arrow[ru, "\cong"']
  \end{tikzcd}
\end{equation*}
commute for all objects $X, Y \in \mathcal{C}$, where $\cong$'s mean the associativity or the unit isomorphism for the $\mathcal{C}$-bimodule category $\mathcal{M}$. Given two objects $\mathbf{M} = (M, \sigma_M)$ and $\mathbf{N} = (N, \sigma_N)$ of $\mathcal{Z}(\mathcal{M})$, a morphism $f : \mathbf{M} \to \mathbf{N}$ in $\mathcal{Z}(\mathcal{M})$ is a morphism $f : M \to N$ in $\mathcal{M}$ such that the equation
\begin{equation*}
  (\id_X \catactl f) \circ \sigma_M(X) = \sigma_N(X) \circ (f \catactr \id_X)
\end{equation*}
holds for all $X \in \mathcal{C}$.

Given two finite $\mathcal{C}$-bimodule categories $\mathcal{M}$ and $\mathcal{N}$, we denote by $\Fun_{\mathcal{C}|\mathcal{C}}(\mathcal{M}, \mathcal{N})$ the category of $\bfk$-linear $\mathcal{C}$-bimodule functors from $\mathcal{M}$ to $\mathcal{N}$. The following lemma is well-known:

\begin{lemma}
  \label{lem:center-as-bimod-func}
  For a finite $\mathcal{C}$-bimodule category $\mathcal{M}$, there is an equivalence
  \begin{equation}
    \label{eq:center-as-bimod-func}
    \Fun_{\mathcal{C}|\mathcal{C}}(\mathcal{C}, \mathcal{M})
    \approx \mathcal{Z}(\mathcal{M}),
    \quad T \mapsto T(\unitobj)
  \end{equation}
  of $\bfk$-linear categories.
\end{lemma}

Specifically speaking, if $T \in \Fun_{\mathcal{C}|\mathcal{C}}(\mathcal{C}, \mathcal{M})$, then the object $T(\unitobj) \in \mathcal{M}$ becomes an object of $\mathcal{Z}(\mathcal{M})$ together with the natural transformation defined by
\begin{equation*}
  T(\unitobj) \catactr X
  \xrightarrow{\quad \xi^{(r)}_{X,\unitobj} \quad}
  T(\unitobj \otimes X)
  \xrightarrow{\quad = \quad}
  T(X \otimes \unitobj)
  \xrightarrow{\quad (\xi^{(\ell)}_{X,\unitobj})^{-1} \quad}
  X \catactl T(\unitobj)
\end{equation*}
for $X \in \mathcal{C}$, where $\xi^{(\ell)}$ and $\xi^{(r)}$ are the left and the right $\mathcal{C}$-module structure of the bimodule functor $T$ (we note that a structure morphism of a $\mathcal{C}$-bimodule functor is invertible by Lemma~\ref{lem:mod-func-strong}).

A quasi-inverse of \eqref{eq:center-as-bimod-func} is constructed as follows: Given an object $(M, \sigma_M)$ of $\mathcal{Z}(\mathcal{M})$, we define $T_{M} : \mathcal{C} \to \mathcal{M}$ by $T_M(X) = X \catactl M$ for $X \in \mathcal{C}$. The functor $T_M$ becomes a $\mathcal{C}$-bimodule functor with the left $\mathcal{C}$-module structure given by the associativity isomorphism and the right $\mathcal{C}$-module structure given by
\begin{gather*}
  T_M(X) \catactr Y
  = (X \catactl M) \catactr Y
  \xrightarrow{\quad \cong \quad} X \catactl (M \catactr Y) \\
  \xrightarrow{\quad \id \catactl \sigma_M(Y) \quad}
  X \catactl (Y \catactr M)
  \xrightarrow{\quad \cong \quad} (X \otimes Y) \catactr M
  = T_M(X \otimes Y)
\end{gather*}
for $X, Y \in \mathcal{C}$. The assignment $(M, \sigma_M) \mapsto T_M$ gives a quasi-inverse of \eqref{eq:center-as-bimod-func}.

The structure morphism $\sigma_M : M \catactr \id_{\mathcal{C}} \to \id_{\mathcal{C}} \catactl M$ is used to define a (lax) right $\mathcal{C}$-module structure of $T_M$. Since $\mathcal{C}$ is rigid, every lax $\mathcal{C}$-module functor is strong by Lemma~\ref{lem:mod-func-strong}. Thus we have:

\begin{lemma}
  If $(M, \sigma_M)$ is an object of $\mathcal{Z}(\mathcal{M})$, then $\sigma_M$ is invertible.
\end{lemma}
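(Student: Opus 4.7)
The plan is to construct an explicit two-sided inverse $\tau_V(X) : G(X) \otimes V \to V \otimes F(X)$ of $\sigma_V(X)$ using the half-braiding on the right dual ${}^*X$ and the rigid structure of $\mathcal{C}$. By Lemma~\ref{lem:tensor-func-strict} applied separately to $F$ and $G$, I may assume without loss of generality that both functors are strict monoidal and strictly preserve the duality; the hexagon axiom then reduces to $\sigma_V(X \otimes Y) = (\id_{G(X)} \otimes \sigma_V(Y)) \circ (\sigma_V(X) \otimes \id_{F(Y)})$, the unit axiom becomes $\sigma_V(\unitobj) = \id_V$, and $F$ and $G$ commute on the nose with taking duals and duality morphisms.

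The proposed inverse is
\[
  \tau_V(X) := (\eval'_{G(X)} \otimes \id_V \otimes \id_{F(X)}) \circ (\id_{G(X)} \otimes \sigma_V({}^*X) \otimes \id_{F(X)}) \circ (\id_{G(X) \otimes V} \otimes \coev'_{F(X)}).
\]
To verify $\tau_V(X) \circ \sigma_V(X) = \id_{V \otimes F(X)}$, I would first combine the hexagon on $X \otimes {}^*X$ with the naturality of $\sigma_V$ along $\eval'_X : X \otimes {}^*X \to \unitobj$ and the unit axiom to obtain
\[
  \id_V \otimes \eval'_{F(X)} = (\eval'_{G(X)} \otimes \id_V) \circ (\id_{G(X)} \otimes \sigma_V({}^*X)) \circ (\sigma_V(X) \otimes \id_{{}^*F(X)}).
\]
Substituting this identity into $\tau_V(X) \circ \sigma_V(X)$---after using functoriality of $\otimes$ to commute $\sigma_V(X)$ past the disjoint factor $\coev'_{F(X)}$---reduces the composition to $\id_V \otimes ((\eval'_{F(X)} \otimes \id_{F(X)}) \circ (\id_{F(X)} \otimes \coev'_{F(X)}))$, which is $\id_V \otimes \id_{F(X)}$ by the zig-zag equation.

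The opposite identity $\sigma_V(X) \circ \tau_V(X) = \id_{G(X) \otimes V}$ is dual. Applying the hexagon to ${}^*X \otimes X$ together with the naturality of $\sigma_V$ along $\coev'_X : \unitobj \to {}^*X \otimes X$ gives
\[
  \sigma_V({}^*X \otimes X) \circ (\id_V \otimes \coev'_{F(X)}) = \coev'_{G(X)} \otimes \id_V,
\]
and substituting this into $\sigma_V(X) \circ \tau_V(X)$ collapses the expression via the zig-zag equation for $G(X)$. I do not anticipate a genuine mathematical obstacle here: the only delicate part is bookkeeping which tensor factors each morphism acts on. Conceptually this is the classical proof that a half-braiding in $\mathcal{Z}(\mathcal{C})$ is automatically invertible, adapted to accommodate two possibly distinct tensor functors $F$ and $G$ on the two sides of the hexagon.
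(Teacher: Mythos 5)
Your explicit construction of a two-sided inverse
\[
  \tau_V(X) = (\eval'_{G(X)} \otimes \id_V \otimes \id_{F(X)}) \circ (\id_{G(X)} \otimes \sigma_V({}^*X) \otimes \id_{F(X)}) \circ (\id_{G(X) \otimes V} \otimes \coev'_{F(X)})
\]
by pushing $\sigma_V$ through the right dual and contracting with duality morphisms is exactly Majid's argument for $F = G$, which the paper's proof simply cites and asserts carries over unchanged. The verification you sketch (hexagon on $X \otimes {}^*X$ and on ${}^*X \otimes X$, naturality of $\sigma_V$ along $\eval'_X$ and $\coev'_X$, unit axiom, zig--zag) is correct and is indeed the intended general-case version, so you have in effect reconstructed the proof the paper delegates to Majid.

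There is, however, a gap in your reduction to the strict case. Lemma~\ref{lem:tensor-func-strict} replaces a \emph{single} tensor functor $F: \mathcal{C} \to \mathcal{D}$ by a strict one $F'$ at the cost of changing the source to a new category $\mathcal{C}'$ (the Eilenberg--Moore category of the Hopf monad $F L$), and this $\mathcal{C}'$ depends on $F$. Applying the lemma to $G$ produces a different intermediate category, and after strictifying $F$ via $K: \mathcal{C} \to \mathcal{C}'$ the composite $G \circ K^{-1}$ need not be strict; iterating only shifts the problem. So ``applied separately to $F$ and $G$'' does not yield a common equivalence making both functors simultaneously strict and strictly duality-preserving, and your stated WLOG is not justified as written. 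Fortunately this is a defect only in the reduction, not in the underlying argument: inserting the canonical isomorphisms $F({}^*X) \cong {}^*F(X)$ and $G({}^*X) \cong {}^*G(X)$ of \eqref{eq:duality-trans} into the formula for $\tau_V(X)$ and using the unit axiom in the form $\sigma_V(\unitobj) \circ (\id_V \otimes F_0) = G_0 \otimes \id_V$ gives the same two-sided inverse, with the compatibility of $F_0, F_2$ (resp.\ $G_0, G_2$) with the duality data absorbing the extra factors in the diagram chase. Either present the proof in that form, or at least restrict the strictification claim to one of the two functors and track the structure morphisms of the other.
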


The center construction is functorial in the following sense:

\begin{lemma}
  Let $\mathcal{M}$ and $\mathcal{N}$ be finite $\mathcal{C}$-bimodule categories, and let $F: \mathcal{M} \to \mathcal{N}$ be a $\bfk$-linear $\mathcal{C}$-bimodule functor with structure morphisms
  \begin{equation*}
    \xi^{(\ell)}_{X,M}: X \catactl F(M) \to F(X \catactl M)
    \quad \text{and} \quad
    \xi^{(r)}_{M,X}: F(M) \catactr X \to F(M \catactr X).
  \end{equation*}
  Then $F$ induces a $\bfk$-linear functor
  \begin{equation*}
    F: \mathcal{Z}(\mathcal{M}) \to \mathcal{Z}(\mathcal{N}),
    \quad (M, \sigma_M) \mapsto \Big( F(M), (\xi^{(\ell)}_{X,M})^{-1}F(\sigma_M)\xi^{(r)}_{M,X} \Big).
  \end{equation*}
\end{lemma}
\begin{proof}
  Straightforward.
\end{proof}

By a {\em tensor functor}, we mean a $\bfk$-linear exact strong monoidal functor.
Let $\mathcal{C}$ and $\mathcal{D}$ be finite tensor categories. Given a tensor functor $F : \mathcal{C} \to \mathcal{D}$ and a left $\mathcal{D}$-module category $\mathcal{M}$ with action $\catactl$, we denote by ${}_{\langle F \rangle}\mathcal{M}$ the category $\mathcal{M}$ viewed as a left $\mathcal{C}$-module category by the action $\catactl_F$ defined by
\begin{equation*}
  X \catactl_F M := F(X) \catactl M \quad (X \in \mathcal{C}, M \in \mathcal{M}).
\end{equation*}
Analogous notations will be used for right module categories and bimodule categories. In this paper, we mainly deal with the following special case of the center construction:

\begin{definition}
  For two tensor functors $F, G : \mathcal{C} \to \mathcal{D}$, we set
  \begin{equation*}
    \mathcal{Z}(F, G) := \mathcal{Z}({}_{\langle G \rangle} \mathcal{D}_{\langle F \rangle}).
  \end{equation*}
\end{definition}

Some important categories are obtained as special cases:
For example, $\mathcal{Z}(F, F)$ is the category that has been introduced by Majid \cite{MR1151906} under the name of the dual of the functored category $(\mathcal{C}, F)$. In particular, the category $\mathcal{Z}(\mathcal{C}) := \mathcal{Z}(\id_{\mathcal{C}}, \id_{\mathcal{C}})$ is the {\em Drinfeld center} of $\mathcal{C}$. The category $\mathcal{Z}(\id_{\mathcal{C}}, S^2)$, where $S = (-)^*$ is the left duality functor on $\mathcal{C}$, is called the {\em twisted Drinfeld center} of $\mathcal{C}$ in \cite{MR3638361}.

\subsection{Tensor products, duals and invertible objects}

\newcommand{\TF}{\mathscr{T\!F}}

Let $\mathcal{C}$ and $\mathcal{D}$ be finite tensor categories, and let $F : \mathcal{C} \to \mathcal{D}$ be a tensor functor.
Following Majid \cite[Theorem 3.3]{MR1151906}, the category $\mathcal{Z}(F, F)$ is a rigid monoidal category.
In the case where $F \ne G$, the category $\mathcal{Z}(F, G)$ does not seem to have a reasonable structure of a monoidal category in general.
Though, for three tensor functors $F, G, H: \mathcal{C} \to \mathcal{D}$, the tensor product $\otimes: \mathcal{Z}(G, H) \times \mathcal{Z}(F, G) \to \mathcal{Z}(F, H)$ is defined by
\begin{equation*}
  (V, \sigma_V) \otimes (W, \sigma_W) = (V \otimes W, \rho)
\end{equation*}
for $(V, \sigma_V) \in \mathcal{Z}(G, H)$ and $(W, \sigma_W) \in \mathcal{Z}(F, G)$, where $\rho$ is defined by
\begin{equation*}
  \rho(X) = (\sigma_V(X) \otimes \id_W) \circ (\id_V \otimes \sigma_W(X))
  : V \otimes W \otimes F(X) \to H(X) \otimes V \otimes W
\end{equation*}
for $X \in \mathcal{C}$. The class of tensor functors from $\mathcal{C}$ to $\mathcal{D}$ form a bicategory, which we denote by  $\TF(\mathcal{C}, \mathcal{D})$, with respect to this tensor product.

As we have mentioned in the above, $\mathcal{Z}(F, F)$ is a rigid monoidal category.
This result is generalized to the bicategory $\TF(\mathcal{C}, \mathcal{D})$ as follows: Given an object $\mathbf{V} = (V, \sigma_V)$ of $\mathcal{Z}(F, G)$, we define $\mathbf{V}^* \in \mathcal{Z}(G, F)$ by $\mathbf{V}^* = (V^*, \sigma_{V^*})$, where
\begin{equation*}
  \sigma_{V^*}(X) =
  \begin{gathered}[t]
    (\eval_{V} \otimes \id_{F(X)} \otimes \id_{V^*})
    \circ (\id_{V^*} \otimes \sigma_{V}(X)^{-1} \otimes \id_{V^*}) \\
    \circ (\id_{V^*} \otimes \id_{G(X)} \otimes \coev_{V})
  \end{gathered}
\end{equation*}
for $X \in \mathcal{C}$.
Let $\xi_X : F(X)^* \to F(X^*)$ and $\zeta_X : G(X)^* \to G(X^*)$ be the duality transformations for $F$ and $G$, respectively.
We also define ${}^*\mathbf{V} \in \mathcal{Z}(G, F)$ by
${}^*\mathbf{V} = ({}^* V, \sigma_{{}^*V})$, where $\sigma_{{}^*V}$ is the natural transformation given by
\begin{equation*}
  \sigma_{{}^* V}(X)
  = ({}^*\xi_X \otimes \id_{{}^*V}) \circ {}^* \sigma_{V}(X^*) \circ (\id_{{}^*V} \otimes {}^*\zeta_X^{-1})
\end{equation*}
for $X \in \mathcal{C}$.
Then $(\mathbf{V}^*, \eval_V, \coev_V)$ and $({}^*\mathbf{V}, \eval_V', \coev_V')$ are a left and a right dual object of $\mathbf{V}$, respectively, in the bicategory $\TF(\mathcal{C}, \mathcal{D})$.

A 1-cell in a bicategory is {\em invertible} if it is dualizable and the evaluation and the coevaluation morphisms are isomorphisms. A 1-cell $\mathbf{V} = (V, \sigma_V)$ of $\TF(\mathcal{C}, \mathcal{D})$ is invertible if and only if its underlying object $V$ is an invertible object of $\mathcal{D}$.

\subsection{The center as the category of modules}
\label{subsec:center-as-modules}

We go back to discuss the center of general finite bimodule categories.
Let $\mathcal{C}$ be a finite tensor category, and let $\mathcal{M}$ be a finite $\mathcal{C}$-bimodule category.
We consider the coend
\begin{equation*}
  A = \int^{X \in \mathcal{C}} X \boxtimes {}^*\!X
\end{equation*}
in the `enveloping' tensor category $\mathcal{C}^{\env} := \mathcal{C} \boxtimes \mathcal{C}^{\rev}$ (see \cite{MR3632104} for the existence of this coend). We denote by $\din_X : X \boxtimes {}^*\!X \to A$ the universal dinatural transformation for the coend $A$.

\begin{remark}
  \label{rem:coend-dual-shift}
  Let $n$ be an integer. Since the left duality functor $S := (-)^*$ of $\mathcal{C}$ is an anti-equivalence, the dinatural transformation
  \begin{equation*}
    \din_{S^n(X)}: S^{n}(X) \boxtimes S^{n-1}(X) \to A \quad (X \in \mathcal{C})
  \end{equation*}
  is universal. Hence we may identify $A = \int^{X \in \mathcal{C}} S^{n}(X) \boxtimes S^{n-1}(X)$.
\end{remark}

By the exactness of the tensor product of $\mathcal{C}^{\env}$, we have
\begin{equation*}
  A \otimes A
  = \int^{(X, Y) \in \mathcal{C} \times \mathcal{C}} (X \boxtimes {}^*\!X) \otimes (Y \boxtimes {}^*Y)
\end{equation*}
with universal dinatural transformation $\din_X \otimes \din_Y$. By the universal property, there is a unique morphism $m: A \otimes A \to A$ such that the equation
\begin{equation}
  \label{eq:cano-alg-mult}
  m \circ (\din_X \otimes \din_Y) = \din_{X \otimes Y}
\end{equation}
holds for all objects $X, Y \in \mathcal{C}$. The coend $A$ is an algebra in $\mathcal{C}^{\env}$ with multiplication $m$ and unit $\mathtt{u} = \din_{\unitobj}$ ({\it cf}. \cite[Lemma 4.5]{MR3632104}).

\begin{definition}
  We call $A$ the {\em canonical algebra} of $\mathcal{C}$.
\end{definition}

This definition looks different from \cite[Definition 7.9.12]{MR3242743}, however, is equivalent to that; see \cite[Subsection 4.3]{MR3632104} and Appendix \ref{append:subsec:canonical-algebra}.

Now let $\mathcal{M}$ be a finite $\mathcal{C}$-bimodule category. We regard it as a left $\mathcal{C}^{\env}$-module category by defining the action by $(X \boxtimes Y^{\rev}) \catactl M = X \catactl M \catactr Y$. Hence the category of left $A$-modules in $\mathcal{M}$ is defined.

\begin{lemma}
  \label{lem:A-mod-and-center}
  For $\mathcal{M}$ as above, there is an isomorphism ${}_A\mathcal{M} \cong \mathcal{Z}(\mathcal{M})$ of $\bfk$-linear categories commuting with the forgetful functors to $\mathcal{M}$.
\end{lemma}
\begin{proof}
  The Drinfeld center $\mathcal{Z}(\mathcal{C})$ is isomorphic to the category of modules over the monad $Z = A \catactl \id_{\mathcal{C}}$ on $\mathcal{C}$ \cite{MR2869176}.
  This lemma can be proved in a similar way as this fact:
  For $M \in \mathcal{M}$, there are isomorphisms
  \begin{align*}
    \Hom_{\mathcal{M}}(A \catactl M, M)
    & \cong \textstyle \Hom_{\mathcal{M}}(\int^{X \in \mathcal{C}} (X^* \boxtimes X) \catactl M, M) \\
    & \cong \textstyle \int_{X \in \mathcal{C}} \Hom_{\mathcal{M}}(X^* \catactl M \catactr X, M) \\
    & \cong \textstyle \int_{X \in \mathcal{C}} \Hom_{\mathcal{M}}(M \catactr X, X \catactl M) \\
    & \cong \Nat(M \catactr \id_{\mathcal{C}}, \id_{\mathcal{C}} \catactl M).
  \end{align*}
  One can check that a morphism $A \catactl M \to M$ in $\mathcal{M}$ makes $M$ an $A$-module if and only if the corresponding natural transformation $M \catactr \id_{\mathcal{C}} \to \id_{\mathcal{C}} \catactl M$ makes $M$ an object of $\mathcal{Z}(\mathcal{M})$. This establishes an isomorphism ${}_A \mathcal{M} \cong \mathcal{Z}(\mathcal{M})$ that preserves underlying objects. The proof is done.
\end{proof}

It has been known that the Drinfeld center $\mathcal{Z}(\mathcal{C}) = \mathcal{Z}(\id_{\mathcal{C}}, \id_{\mathcal{C}})$ and the twisted Drinfeld center $\mathcal{Z}(\id_{\mathcal{C}}, S^2)$ are finite abelian categories. It is worth noting that the finiteness of the center of a finite $\mathcal{C}$-bimodule category follows from the above lemma and a finiteness criterion mentioned in Subsection~\ref{subsec:FTCs}.

Now let $\mathcal{C}$ and $\mathcal{D}$ be a finite tensor category, and let $F$ and $G$ be tensor functors from $\mathcal{C}$ to $\mathcal{D}$.
Since $G \boxtimes F^{\rev} : \mathcal{C}^{\env} \to \mathcal{D}^{\env}$ is a tensor functor, the object
\begin{equation}
  \label{eq:algebra A-F-G}
  A_{F,G} := (G \boxtimes F^{\rev})(A) = \int^{X \in \mathcal{C}} G(X) \boxtimes F({}^*\!X)
\end{equation}
is an algebra in $\mathcal{D}^{\env}$. For a finite $\mathcal{D}$-bimodule category $\mathcal{M}$, an $A_{F,G}$-module in $\mathcal{M}$ is nothing but an $A$-module in ${}_{\langle G \rangle}\mathcal{M}_{\langle F \rangle}$. Thus, by Lemma~\ref{lem:A-mod-and-center}, we have:

\begin{lemma}
  \label{lem:Z-F-G-monadic}
  For a finite $\mathcal{D}$-bimodule category $\mathcal{M}$, the category of $A_{F,G}$-modules in $\mathcal{M}$ is isomorphic to the center of the $\mathcal{C}$-bimodule category ${}_{\langle G \rangle}\mathcal{M}_{\langle F \rangle}$.
  In particular, $\mathcal{Z}(F, G)$ is identified with the category of $A_{F,G}$-modules in $\mathcal{D}$.
\end{lemma}

\section{Relative modular object}
\label{sec:relative-modulus}

\subsection{Radford isomorphism}

We first recall the categorical analogue of the Radford $S^4$-formula given by Etingof, Nikshych and Ostrik \cite{MR2097289}. Let $\mathcal{C}$ be a finite tensor category. Then $\mathcal{C}^{\env}$ acts on $\mathcal{C}$ from the left by the action
\begin{equation*}
  (X \boxtimes Y^{\rev}) \catactl V = X \otimes V \otimes Y
  \quad (V, X, Y \in \mathcal{C}).
\end{equation*}
We denote by $\iHom : \mathcal{C}^{\op} \times \mathcal{C} \to \mathcal{C}^{\env}$ the internal Hom functor for the left $\mathcal{C}^{\env}$-module category $\mathcal{C}$. There is an equivalence
\begin{equation}
  \label{eq:fund-thm-Hopf-bimod-1}
  \Kappa_{\mathcal{C}}: \mathcal{C} \to (\mathcal{C}^{\env})_{\iHom(\unitobj, \unitobj)}
  \quad V \mapsto \iHom(\unitobj, V)
\end{equation}
of left $\mathcal{C}^{\env}$-module categories \cite[Proposition 2.3]{MR2097289}, which can be thought of as a categorical analogue of the fundamental theorem for Hopf bimodules.

As remarked in \cite[Subsection 4.3]{MR3632104}, the algebra $A := \iHom(\unitobj, \unitobj)$ is identified with the canonical algebra $\int^{X \in \mathcal{C}} X \boxtimes {}^*\!X$ in $\mathcal{C}^{\env}$. Since $\Kappa_{\mathcal{C}}$ is an equivalence of left $\mathcal{C}^{\env}$-module categories, it induces an equivalence
\begin{equation}
  \label{eq:ENO-Hopf-bimod-2}
  \Kappa_{\mathcal{C}}: {}_{A^{**}}\mathcal{C} \to {}_{A^{**}}(\mathcal{C}^{\env})_A,
  \quad V \mapsto \iHom(\unitobj, V)
\end{equation}
between the categories of left $A^{**}$-modules.
We note that $A^{**}$ is identified with the algebra $A_{\id_{\mathcal{C}}^{}, S^4}$, where $S = (-)^*$ is the left duality functor on $\mathcal{C}$ (see Subsection~\ref{subsec:center-as-modules} for notation). Indeed, by Remark~\ref{rem:coend-dual-shift}, we have isomorphisms
\begin{equation*}
  A^{**}
  \cong \int^{X \in \mathcal{C}} (X^{**} \boxtimes X^{*})^{**}
  \cong \int^{X \in \mathcal{C}} S^4(X) \boxtimes {}^*\!X
  = A_{\id_{\mathcal{C}}^{}, S^4}
\end{equation*}
of algebras in $\mathcal{C}^{\env}$.

The object $A^* \in \mathcal{C}^{\env}$ is an $A^{**}$-$A$-bimodule by the actions
\begin{gather}
  \label{eq:cano-alg-m-dagger}
  m^{\ddag} := (\eval_{A^{*}} \otimes \id_{A^*}) \circ (\id_{A^{**}} \otimes m^*) : A^{**} \otimes A^* \to A^*, \\
  \label{eq:cano-alg-m-flat}
  m^{\dagger} := (\id_{A^{*}} \otimes \eval_A) \circ (m^* \otimes \id_{A}) : A^* \otimes A \to A^*.
\end{gather}
The following graphical expressions may be helpful:
\begin{gather*}
  m^* = 
  \begin{tikzpicture}[x = 1pc, y = 1pc, baseline=-.5em]
    \draw let \p1 = (1,0), \p2 = ($(\p1)+(1,0)$),
    \p3 = ($(\p2)+(1,0)$)
    in (\p1) coordinate (T1)
    to [out=-90, in=-90, looseness=2] coordinate[pos = .5](M) (\p2)
    to [out=+90, in=+90, looseness=2] (\p3) coordinate (T3)
    to (\x3, -2) node [below] {$A^{*}$};
    \fill (M) circle (1pt);
    \draw let \p1 = (M), \p2 = ($(T1)-(1,0)$)
    in (\p1) to [out=-90, in=-90, looseness=2] (\x2, \y1)
    to ($(\x2,2)$) node [above] {$A^{*}$};
    \draw let \p1 = (T1), \p2 = (T3)
    in (\p1) to [out=+90, in=+90, looseness=2] ($(\p2)+(1,0)$)
    to ($(\x2,-2)+(1,0)$) node [below] {$A^{*}$};
  \end{tikzpicture}
  \quad
  m^{\ddag} =
  \begin{tikzpicture}[x = 1pc, y = 1pc, baseline=-.5em]
    \draw let \p1 = (1,0), \p2 = ($(\p1)+(1,0)$),
    \p3 = ($(\p2)+(1,0)$), \p4 = ($(\p1)-(2.5,0)$)
    in (\p1) coordinate (T1)
    to [out=-90, in=-90, looseness=2] coordinate[pos = .5](M) (\p2)
    to [out=+90, in=+90, looseness=2] (\p3) coordinate (T3)
    to [out=-90, in=-90, looseness=2] (\p4) coordinate (T4)
    to (\x4, 2) node [above] {$A^{**}$};
    \fill (M) circle (1pt);
    \draw let \p1 = (M), \p2 = ($(T4)+(1.75,0)$)
    in (\p1) to [out=-90, in=-90, looseness=2] (\x2, \y1)
    to ($(\x2,2)$) node [above] {$A^{*}$};
    \draw let \p1 = (T1), \p2 = (T3)
    in (\p1) to [out=+90, in=+90, looseness=2] ($(\p2)+(1,0)$)
    to ($(\x2,-2)+(1,0)$) node [below] {$A^{*}$};
  \end{tikzpicture}
  \quad
  m^{\dagger} =
  \begin{tikzpicture}[x = 1pc, y = 1pc, baseline=-.5em]
    \draw let \p1 = (1,0), \p2 = ($(\p1)+(1,0)$),
    \p3 = ($(\p2)+(1,0)$)
    in (\p1) coordinate (T1)
    to [out=-90, in=-90, looseness=2] coordinate[pos = .5](M) (\p2)
    to [out=+90, in=+90, looseness=2] (\p3) coordinate (T3)
    to (\x3, -2) node [below] {$A^{*}$};
    \fill (M) circle (1pt);
    \draw let \p1 = (M), \p2 = ($(T1)-(1.5,0)$)
    in (\p1) to [out=-90, in=-90, looseness=2] (\x2, \y1)
    to ($(\x2,2)$) node [above] {$A^{*}$};
    \draw let \p1 = (T1) in (\p1) to (\x1, 2) node [above] {$A$};
  \end{tikzpicture}
\end{gather*}
Here,
$\begin{tikzpicture}[x = .5pc, y = .5pc, baseline=0]
  \draw (0, 1.25) to [out=-90, in=-90, looseness=2] coordinate[pos = .5](M) (2, 1.25);
  \fill (M) circle (1pt);
  \draw let \p1 = (M) in (\p1) -- (\x1, -.75);
\end{tikzpicture} : A \otimes A \to A$
is the multiplication of $A$. The evaluation and the coevaluation are expressed by a cup $\cup$ and a cap $\cap$, respectively.

\begin{definition}
  \label{def:rad-obj}
  The {\em Radford object} $\modobjb_{\mathcal{C}} \in \mathcal{Z}(\id_{\mathcal{C}}, S^{4})$ is defined to be the object corresponding to the $A^{**}$-$A$-bimodule $A^{*}$ via the equivalence
  \begin{equation*}
    \renewcommand{\xarrlen}[1]{6em}
    \mathcal{Z}(\id_{\mathcal{C}}, S^{4})
    \xarr{\text{Lemma~\ref{lem:Z-F-G-monadic}}}
    {}_{A^{**}}\mathcal{C}
    \xarr{\text{\eqref{eq:ENO-Hopf-bimod-2}}}
    {}_{A^{**}}(\mathcal{C}^{\env})_A.
  \end{equation*}
  The {\em distinguished invertible object} \cite{MR2097289} of $\mathcal{C}$ is defined to be the object $\modobj_{\mathcal{C}} \in \mathcal{C}$ such that $\iHom(\unitobj, \modobj_{\mathcal{C}}) \cong A^*$ as right $A$-modules.
  Thus there is a natural isomorphism $\delta_X: \modobj_{\mathcal{C}} \otimes X \to X^{****} \otimes \modobj_{\mathcal{C}}$ ($X \in \mathcal{C}$) such that $\modobjb_{\mathcal{C}} = (\modobj_{\mathcal{C}}, \delta)$. We refer to $\delta$ as the {\em Radford isomorphism} of $\mathcal{C}$.
\end{definition}

The proof of the following lemma is postponed to Appendix \ref{sec:def-rad-obj}.

\begin{lemma}
  \label{lem:Radford-iso-ENO}
  The Radford isomorphism $\delta$ defined in the above coincides with that induced by the isomorphism $X^{**} \cong \modobj_{\mathcal{C}} \otimes {}^{**}X \otimes (\modobj_{\mathcal{C}})^*$ of tensor functors given in \cite[Theorem 3.3]{MR2097289}.
\end{lemma}

\subsection{Relative modular object}

A tensor functor is said to be {\em perfect} \cite[Subsection 2.1]{MR3161401} if it has an exact left adjoint or, equivalently, it has an exact right adjoint.
Let $F: \mathcal{B} \to \mathcal{C}$ be a perfect tensor functor between finite tensor categories.
Since $F$ is a $\mathcal{B}$-bimodule functor from $\mathcal{B}$ to ${}_{\langle F \rangle} \mathcal{C}_{\langle F \rangle}$ in an obvious way, its double right adjoint $F^{\rradj} := (F^{\radj})^{\radj}$ is a $\mathcal{B}$-bimodule functor from $\mathcal{B}$ to ${}_{\langle F \rangle} \mathcal{C}_{\langle F \rangle}$.

\begin{definition}
  \label{def:rel-mod}
  Let $F$ be as above. The {\em relative modular object} of $F$ is the object $\relmodb_F \in \mathcal{Z}(F,F)$ corresponding to $F^{\rradj}$ via the category equivalence
  \begin{equation*}
    \Fun_{\mathcal{B} | \mathcal{B}}(\mathcal{B}, {}_{\langle F \rangle} \mathcal{C}_{\langle F \rangle})
    \xrightarrow[\approx]{\quad \text{Lemma~\ref{lem:center-as-bimod-func}} \quad}
    \mathcal{Z}(F, F).
  \end{equation*}
\end{definition}

We write $\relmodb_F = (\relmod_F, \gamma_F)$. As explained in \cite{MR3569179}, the object $\relmod_F$ is a categorical counterpart of the {\em relative modular function} of \cite{MR1401518}.
The main result of \cite{MR3569179} claims that there is an isomorphism $\relmod_F \cong \modobj_{\mathcal{C}}^* \otimes F(\modobj_{\mathcal{B}})$ in $\mathcal{C}$ (remark that $\modobj_{\mathcal{C}}$ and $\relmod_F$ in this paper are $\alpha_{\mathcal{C}}^*$ and $\chi_F^*$ of \cite{MR3569179}, respectively). For the purpose of this paper, we also require the following description of the isomorphism $\gamma_F$.

\begin{theorem}
  \label{thm:relative-modulus}
  There is an isomorphism $u : F(\modobj_{\mathcal{B}}) \to \modobj_{\mathcal{C}} \otimes \relmod_F$ in $\mathcal{B}$ such that the diagram
  \begin{equation*}
    \begin{tikzcd}[column sep = 3em]
      F(\modobj_{\mathcal{B}} \otimes X)
      \arrow[dd, "F(\delta_X)"']
      & F(\modobj_{\mathcal{B}}) \otimes F(X)
      \arrow[l, "F_2"']
      \arrow[r, "u \otimes \id"]
      & \modobj_{\mathcal{C}} \otimes \relmod_F \otimes F(X)
      \arrow[d, "\id \otimes \gamma_F(X)"] \\
      & & \modobj_{\mathcal{C}} \otimes F(X) \otimes \relmod_F
      \arrow[d, "\delta_{F(X)} \otimes \id"] \\
      F(X^{****} \otimes \modobj_{\mathcal{B}})
      \arrow[r, "(F_2)^{-1}"]
      & F(X^{****}) \otimes F(\modobj_{\mathcal{B}})
      \arrow[r, "\xi \otimes u"]
      & F(X)^{****} \otimes \modobj_{\mathcal{C}} \otimes \relmod_F
    \end{tikzcd}
  \end{equation*}
  commutes for all objects $X \in \mathcal{B}$, where $F_2$ is the monoidal structure of $F$ and $\xi$ is the isomorphism obtained by the iterative use of the duality transformation for $F$.
\end{theorem}

In other words, there is an isomorphism $F(\modobjb_{\mathcal{B}}) \cong \modobjb_{\mathcal{C}} \otimes \relmodb_F$ in $\mathcal{Z}(F, F S^4)$ if we view $\modobjb_{\mathcal{C}}$ as an object of $\mathcal{Z}(F, S^4 F)$ in a natural way and identify $F S^4$ with $S^4 F$ through the duality transformation for $F$.

It has been observed in \cite[Remark 4.17]{MR4042867} that one can prove that there is an isomorphism $F(\modobj_{\mathcal{B}}) \cong \modobj_{\mathcal{C}} \otimes \relmod_F$ by a basic property of the Nakayama functor for finite abelian categories. The above theorem will be proved by a slight modification of this idea.

\subsection{Nakayama functor}

For a finite abelian category $\mathcal{M}$, the (left exact) Nakayama functor \cite{MR4042867} is the $\bfk$-linear endofunctor $\Nak^{\ell}_{\mathcal{M}}$ on $\mathcal{M}$ defined by
\begin{equation*}
  \Nak^{\ell}_{\mathcal{M}}(M) = \int_{X \in \mathcal{M}} \Hom_{\mathcal{M}}(X, M) \otimes_{\bfk} X
  \quad (M \in \mathcal{M}).
\end{equation*}
A feature of the Nakayama functor that is important in this paper is the canonical isomorphism \eqref{eq:Nakayama-double-adj} below.
Let $\mathcal{M}$ and $\mathcal{N}$ be finite abelian categories, and let $F : \mathcal{M} \to \mathcal{N}$ be a $\bfk$-linear functor satisfying the following condition:
\begin{equation}
  \label{eq:Nakayama-double-adj-requirement}
  \text{its double left adjoint exists and is left exact}.
\end{equation}
Then we have isomorphisms
\begin{align*}
  \Nak^{\ell}_{\mathcal{N}} F(M)
  & = \textstyle \int_{Y \in \mathcal{N}} \Hom_{\mathcal{N}}(Y, F(M)) \otimes_{\bfk} Y \\
  & \cong \textstyle \int_{Y \in \mathcal{N}} \Hom_{\mathcal{N}}(F^{\ladj}(Y), M) \otimes_{\bfk} Y \\
  & \cong \textstyle \int_{X \in \mathcal{M}} \Hom_{\mathcal{N}}(X, M) \otimes_{\bfk} F^{\lladj}(X) \\
  & \cong \textstyle F^{\lladj}(\int_{X \in \mathcal{M}} \Hom_{\mathcal{N}}(X, M) \otimes_{\bfk} X)
    = F^{\lladj} \Nak^{\ell}_{\mathcal{M}}(M)
\end{align*}
for $M \in \mathcal{M}$, where the first one is the adjunction isomorphism,
the second one is given by Lemma~\ref{lem:coend-adj},
and the third one follows from that $F^{\lladj}$ is $\bfk$-linear and preserves limits. Summarizing, we have an isomorphism
\begin{equation}
  \label{eq:Nakayama-double-adj}
  \varphi_F^{\ell} : \Nak^{\ell}_{\mathcal{N}} \circ F \to F^{\lladj} \circ \Nak^{\ell}_{\mathcal{M}}
\end{equation}
of $\bfk$-linear functors \cite[Theorem 3.18]{MR4042867}. As noted in {\it loc. cit.}, this isomorphism is natural in the variable $F$ and fulfills a certain  `coherent' property.

\begin{remark}
  \label{rem:Nakayama-double-adj-naturality}
  For practical applications, choices of adjoints often matter.
  Suppose that $\bfk$-linear functors $F_i : \mathcal{M} \to \mathcal{N}$ ($i = 1, 2$) satisfy \eqref{eq:Nakayama-double-adj-requirement}.
  For each $i$, we fix a double left adjoint $F^{\lladj}_i$ of $F_i$. The naturality of \eqref{eq:Nakayama-double-adj} means that the diagram
  \begin{equation*}
    \begin{tikzcd}[column sep = 5em]
      \Nak^{\ell}_{\mathcal{N}} \circ F_1
      \arrow[r, "\varphi^{\ell}_{F_1}"]
      \arrow[d, "N^{\ell}_{\mathcal{N}} \circ \xi"']
      & F_1^{\lladj} \circ \Nak^{\ell}_{\mathcal{M}}
      \arrow[d, "\xi^{\lladj} \circ \Nak^{\ell}_{\mathcal{M}}"]
      \\ \Nak^{\ell}_{\mathcal{N}} \circ F_2
      \arrow[r, "\varphi^{\ell}_{F_2}"]
      & F_2^{\lladj} \circ \Nak^{\ell}_{\mathcal{M}}
    \end{tikzcd}
  \end{equation*}
  commutes for all natural transformation $\xi : F_1 \to F_2$. Now we consider the case where $F := F_1 = F_2$ and $\xi = \id$. Although $F_1^{\lladj}$ and $F_2^{\lladj}$ may be different, they are canonically isomorphic. The above commutative diagram says that the canonical isomorphism $\nu : F_1^{\lladj} \to F_2^{\lladj}$ makes the following diagram commute:
  \begin{equation*}
    \begin{tikzcd}[column sep = 5em]
      \Nak^{\ell}_{\mathcal{N}} \circ F_1
      \arrow[r, "\varphi^{\ell}_{F_1}"]
      \arrow[d, equal]
      & F_1^{\lladj} \circ \Nak^{\ell}_{\mathcal{M}}
      \arrow[d, "\nu \circ \Nak^{\ell}_{\mathcal{M}}"]
      \\ \Nak^{\ell}_{\mathcal{N}} \circ F_2
      \arrow[r, "\varphi^{\ell}_{F_2}"]
      & F_2^{\lladj} \circ \Nak^{\ell}_{\mathcal{M}}
    \end{tikzcd}
  \end{equation*}
\end{remark}

\begin{remark}
  Let $F : \mathcal{M} \to \mathcal{N}$ and $G : \mathcal{L} \to \mathcal{M}$ be $\bfk$-linear functors between finite abelian categories $\mathcal{L}$, $\mathcal{M}$ and $\mathcal{N}$. Suppose that $F$ and $G$ satisfy \eqref{eq:Nakayama-double-adj-requirement}. The coherence property of \eqref{eq:Nakayama-double-adj} means that the following diagram commutes:
  \begin{equation*}
    \begin{tikzcd}[column sep = 5em, row sep = 2em]
      \Nak^{\ell}_{\mathcal{N}} \circ F \circ G
      \arrow[d, equal]
      \arrow[rr, "\varphi^{\ell}_{F \circ G}"]
      & & (F \circ G)^{\lladj} \circ \Nak^{\ell}_{\mathcal{L}}
      \arrow[d, "\cong"] \\
      \Nak^{\ell}_{\mathcal{N}} \circ F \circ G
      \arrow[r, "\varphi^{\ell}_{F} \circ \id"]
      & F^{\lladj} \circ \Nak^{\ell}_{\mathcal{M}} \circ G
      \arrow[r, "\id \circ \varphi^{\ell}_{F}"]
      & F^{\lladj} \circ G^{\lladj} \circ \Nak^{\ell}_{\mathcal{L}}
    \end{tikzcd}
  \end{equation*}
\end{remark}

Now let $\mathcal{C}$ be a finite tensor category, and let $\mathcal{M}$ be a finite $\mathcal{C}$-bimodule category. For $X \in \mathcal{C}$, we define two endofunctors $T^{(\ell)}_{X}$ and $T^{(r)}_{X}$ on $\mathcal{M}$ by $T^{(\ell)}_X = X \catactl \id_{\mathcal{M}}$ and $T^{(r)}_X = \id_{\mathcal{M}} \catactr X$, respectively. There are isomorphisms
\begin{equation*}
  T^{(\ell)}_{X^{**}} \circ \Nak^{\ell}_{\mathcal{M}}
  = (T^{(\ell)}_{X})^{\lladj} \circ \Nak^{\ell}_{\mathcal{M}}
  \mathop{\cong}^{\eqref{eq:Nakayama-double-adj}}
  \Nak^{\ell}_{\mathcal{M}} \circ T^{(\ell)}_X
  \quad \text{and} \quad
  T^{(r)}_{{}^{**}\!X} \circ \Nak^{\ell}_{\mathcal{M}}
  \mathop{\cong}^{\eqref{eq:Nakayama-double-adj}}
  \Nak^{\ell}_{\mathcal{M}} \circ T^{(r)}_{X}
\end{equation*}
of endofunctors on $\mathcal{M}$. In other words, there are isomorphisms
\begin{equation*}
  X^{**} \catactl \Nak^{\ell}_{\mathcal{M}}(M) \cong \Nak^{\ell}_{\mathcal{M}}(X \catactl M)
  \quad \text{and} \quad
  \Nak^{\ell}_{\mathcal{M}}(M) \catactr X^{**} \cong \Nak^{\ell}_{\mathcal{M}}(M \catactr X)
\end{equation*}
for $X \in \mathcal{C}$ and $M \in \mathcal{M}$.
By the naturality and the coherence property of \eqref{eq:Nakayama-double-adj}, we see that these isomorphisms make the functor $\Nak^{\ell}_{\mathcal{M}}$ a left $\mathcal{C}$-module functor from $\mathcal{M}$ to ${}_{\langle S \rangle}\mathcal{M}_{\langle S^{-2}\rangle}$, where $S = (-)^*$ and $S^{-1} = {}^*(-)$ \cite[Corollary 4.9]{MR4042867}.

If $\mathcal{M}$ and $\mathcal{N}$ are finite $\mathcal{C}$-bimodule categories and $F : \mathcal{M} \to \mathcal{N}$ is a $\bfk$-linear $\mathcal{C}$-bimodule functor satisfying~\eqref{eq:Nakayama-double-adj-requirement}, then the both sides of \eqref{eq:Nakayama-double-adj} are $\mathcal{C}$-bimodule functors from $\mathcal{M}$ to ${}_{\langle S \rangle}\mathcal{N}_{\langle S^{-2}\rangle}$. The following observation is one of key ingredients of the proof of Theorem~\ref{thm:relative-modulus}.

\begin{lemma}
  \label{lem:Nakayama-double-adj-bimodule}
  For $F: \mathcal{M} \to \mathcal{N}$ as above, the isomorphism \eqref{eq:Nakayama-double-adj} is in fact an isomorphism of $\mathcal{C}$-bimodule functors from $\mathcal{M}$ to ${}_{\langle S^2 \rangle}\mathcal{N}_{\langle S^{-2}\rangle}$.
\end{lemma}
\begin{proof}
  Let $\xi_{X,M} : X \catactl F(M) \to F(X \catactl M)$ ($X \in \mathcal{C}$, $M \in \mathcal{M}$) be the left $\mathcal{C}$-module structure of $F$. We fix an object $X \in \mathcal{C}$ and write $T = T^{(\ell)}_X$ for simplicity. By the naturality and the coherence property of the isomorphism \eqref{eq:Nakayama-double-adj}, we see that the diagram given as Figure~\ref{fig:proof-lemma-Nakayama-adj-bimodule} is commutative.
  On the one hand, the first row of the diagram is the left $\mathcal{C}$-module structure of $\Nak^{\ell}_{\mathcal{N}} \circ F$. On the other hand, by Lemma~\ref{lem:mod-func-double-adj}, the second row is the left $\mathcal{C}$-module structure of $F^{\lladj} \circ \Nak^{\ell}_{\mathcal{M}}$. Thus the commutativity of the diagram implies that $\varphi^{\ell}_{F}$ is a morphism of left $\mathcal{C}$-module functors. It can be shown that $\varphi^{\ell}_{F}$ is also a morphism of right $\mathcal{C}$-module functors in a similar manner. The proof is done.
\end{proof}

\begin{figure}
  \centering
    \begin{equation*}
    \begin{tikzcd}[column sep = 8em, row sep = 2em]
      T^{\lladj} \circ \Nak^{\ell}_{\mathcal{N}} \circ F
      \arrow[r, "T^{\lladj} \circ \varphi^{\ell}_F"]
      \arrow[d, "(\varphi^{\ell}_{T})^{-1}"']
      \arrow[rd, phantom, "\scriptstyle\text{(coherence)}"]
      & T^{\lladj} \circ F^{\lladj} \circ \Nak^{\ell}_{\mathcal{M}}
      \arrow[d, "\cong"] \\
      \Nak^{\ell}_{\mathcal{N}} \circ T \circ F
      \arrow[r, "\varphi^{\ell}_{T \circ F}"]
      \arrow[d, "\Nak^{\ell}_{\mathcal{N}} \circ (\xi_{X,-})"']
      \arrow[rd, phantom, "\scriptstyle\text{(naturality)}"]
      & (T \circ F)^{\lladj} \circ \Nak^{\ell}_{\mathcal{M}}
      \arrow[d, "(\xi_{X,-})^{\lladj} \circ \Nak^{\ell}_{\mathcal{M}}"] \\
      \Nak^{\ell}_{\mathcal{N}} \circ F \circ T
      \arrow[dd, equal]
      \arrow[r, "\varphi^{\ell}_{F \circ T}"]
      \arrow[rdd, phantom, "\scriptstyle\text{(coherence)}"]
      & (F \circ T)^{\lladj} \circ \Nak^{\ell}_{\mathcal{M}}
      \arrow[d, "\cong"] \\
      & F^{\lladj} \circ T^{\lladj} \circ \Nak^{\ell}_{\mathcal{M}}
      \arrow[d, "F^{\lladj} \circ \varphi^{\ell}_{T}"] \\
      \Nak^{\ell}_{\mathcal{N}} \circ F \circ T
      \arrow[r, "\varphi^{\ell}_F \circ T"]
      & F^{\lladj} \circ \Nak^{\ell}_{\mathcal{M}} \circ T
    \end{tikzcd}
  \end{equation*}
  \caption{Proof of Lemma~\ref{lem:Nakayama-double-adj-bimodule}}
  \label{fig:proof-lemma-Nakayama-adj-bimodule}
\end{figure}

\subsection{Nakayama functor and Hopf bimodules}

Let $\mathcal{C}$ be a finite tensor category. We discuss the relation between the Radford isomorphism and the Nakayama functor of $\mathcal{C}$. We first introduce the $\bfk$-linear left exact functor
\begin{equation*}
  \Phi : \mathcal{C}^{\env} \to \Lex(\mathcal{C}) := \Lex(\mathcal{C}, \mathcal{C}),
  \quad V \boxtimes W^{\rev} \mapsto \Hom_{\mathcal{C}}(W^*, -) \otimes_{\bfk} V.
\end{equation*}
As noted in \cite[Remark 2.2]{MR2097289}, this functor is an equivalence (see \cite{MR3632104} and \cite{MR4042867} for the detailed proof and some variants of this equivalence). We make $\Lex(\mathcal{C})$ a $\mathcal{C}^{\env}$-bimodule category in such a way that $\Phi$ is an equivalence of bimodule categories. The resulting actions of $\mathcal{C}^{\env}$ are given by
\begin{gather}
  \label{eq:env-action-Lex-l}
  (X \boxtimes Y^{\rev}) \catactl F = X \otimes F(Y \otimes -), \\
  \label{eq:env-action-Lex-r}
  F \catactr (X \boxtimes Y^{\rev}) = F(- \otimes Y^{**}) \otimes X
\end{gather}
for $X, Y \in \mathcal{C}$ and $F \in \Lex(\mathcal{C})$. Now let $A \in \mathcal{C}^{\env}$ be the canonical algebra.

\begin{lemma}
  \label{lem:right-A-mod-in-Lex}
  The category of right $A$-modules in $\Lex(\mathcal{C})$ is isomorphic to the category of $\bfk$-linear right $\mathcal{C}$-module endofunctors on $\mathcal{C}$.
\end{lemma}
\begin{proof}
  By same way as Lemma~\ref{lem:center-as-bimod-func}, one can prove that the category $\Lex(\mathcal{C})_A$ is isomorphic to the category $\Lex_{\mathcal{C}}(\mathcal{C})$ of $\bfk$-linear left exact right $\mathcal{C}$-module functors endofunctors on $\mathcal{C}$.
  Since every $\bfk$-linear right $\mathcal{C}$-module endofunctors on $\mathcal{C}$ is of the form $V \otimes \id_{\mathcal{C}}$ for some $V \in \mathcal{C}$, the category $\Lex_{\mathcal{C}}(\mathcal{C})$ is actually equal to the category of $\bfk$-linear right $\mathcal{C}$-module endofunctors on $\mathcal{C}$. The proof is done.
\end{proof}

By the above lemma, the functor
\begin{equation}
  \label{eq:equiv-regular-right-module}
  \Lex(\mathcal{C})_A \to \mathcal{C},
  \quad F \mapsto F(\unitobj)
\end{equation}
is an equivalence of categories. This functor preserves the action~\eqref{eq:env-action-Lex-l}, and hence it is in fact an equivalence of left $\mathcal{C}^{\env}$-module categories.

\begin{lemma}
  A quasi-inverse of $\Kappa_{\mathcal{C}}$ is given by the composition
  \begin{equation}
    \label{eq:ENO-Hopf-bimod-3}
    \overline{\Kappa}_{\mathcal{C}}
    := \Big((\mathcal{C}^{\env})_A
      \xrightarrow{\quad \Phi \quad}
      \Lex(\mathcal{C})_A
      \xrightarrow{\quad \eqref{eq:equiv-regular-right-module} \quad}
      \mathcal{C}\Big).
  \end{equation}
\end{lemma}
\begin{proof}
  Since we have already known that $\Kappa_{\mathcal{C}}$ is an equivalence, it is enough to show that there is an isomorphism $\overline{\Kappa}_{\mathcal{C}} \Kappa_{\mathcal{C}} \cong \id_{\mathcal{C}}$ of left $\mathcal{C}^{\env}$-module functors.
  According to \cite[Subsection 4.3]{MR3632104}, a quasi-inverse of $\Phi$ is given by
  \begin{equation*}
    \Phi^{-1}(F) = \int^{X \in \mathcal{C}} F(X) \boxtimes {}^*\!X
    \quad (F \in \Lex(\mathcal{C})).
  \end{equation*}
  In particular, $A = \Phi^{-1}(\id_{\mathcal{C}})$. Since $\Phi$ is a left $\mathcal{C}^{\env}$-module functor, we have
  \begin{equation*}
    \overline{\Kappa}_{\mathcal{C}} \Kappa_{\mathcal{C}}(V)
    = \overline{\Kappa}_{\mathcal{C}}((V \boxtimes \unitobj) \otimes A)
    \cong (V \boxtimes \unitobj) \catactl \overline{\Kappa}_{\mathcal{C}}(A) \cong V
  \end{equation*}
  for $V \in \mathcal{C}$. Thus $\overline{\Kappa}_{\mathcal{C}} \Kappa_{\mathcal{C}} \cong \id_{\mathcal{C}}$. One can check that this isomorphism is in fact a morphism of left $\mathcal{C}^{\env}$-module functors. The proof is done.
\end{proof}

The functors in equation~\eqref{eq:ENO-Hopf-bimod-3} induce equivalences
\begin{equation}
  \label{eq:ENO-Hopf-bimod-4}
  {}_{A^{**}}(\mathcal{C}^{\env})_A
  \xrightarrow{\quad \Phi \quad}
  {}_{A^{**}}\Lex(\mathcal{C})_A
  \xrightarrow{\quad \eqref{eq:equiv-regular-right-module} \quad}
  {}_{A^{**}}\mathcal{C}
\end{equation}
between left the categories of $A^{**}$-modules. By the above lemma, \eqref{eq:ENO-Hopf-bimod-4} is a quasi-inverse of the equivalence \eqref{eq:ENO-Hopf-bimod-2} and thus sends the $A^{**}$-$A$-bimodule $A^*$ to $\modobjb_{\mathcal{C}}$ under the identification ${}_{A^{**}}\mathcal{C} \cong \mathcal{Z}(\id_{\mathcal{C}}, S^4)$ by Lemma~\ref{lem:Z-F-G-monadic}.

It is natural to ask what an object of the category ${}_{A^{**}}\Lex(\mathcal{C})_A$ corresponding to the bimodule $A^*$ is.
By Lemmas~\ref{lem:Z-F-G-monadic} and \ref{lem:right-A-mod-in-Lex}, we have an isomorphism
\begin{equation*}
  {}_{A^{**}}\Lex(\mathcal{C})_A
  \cong \Fun_{\mathcal{C}|\mathcal{C}}(\mathcal{C}, {}_{\langle S^4 \rangle}\mathcal{C})
\end{equation*}
of categories. Now we define $\check{\Nak}: \mathcal{C} \to \mathcal{C}$ by $\check{\Nak}(X) = \Nak^{\ell}_{\mathcal{C}}(X^{**})$ for $X \in \mathcal{C}$. Since the Nakayama functor $\Nak^{\ell}_{\mathcal{C}}$ is a $\mathcal{C}$-bimodule functor from $\mathcal{C}$ to ${}_{\langle S^2 \rangle}\mathcal{C}_{\langle S^{-2} \rangle}$, the functor $\check{\Nak}$ is naturally a $\mathcal{C}$-bimodule functor from $\mathcal{C}$ to ${}_{\langle S^4 \rangle}\mathcal{C}$. The above question is answered as follows:

\begin{lemma}
  $\Phi(A^*) \cong \check{\Nak}$ in $\Fun_{\mathcal{C}|\mathcal{C}}(\mathcal{C}, {}_{\langle S^4 \rangle}\mathcal{C})$.
\end{lemma}
\begin{proof}
  Since the duality functor $(-)^* : \mathcal{C}^{\env} \to \mathcal{C}^{\env}$ is an anti-equivalence, it turns a coend into an end, and thus we have isomorphisms
  \begin{equation}
    \label{eq:canonical-alg-dual-as-end}
    A^*
    \cong \left(\int^{\smash{X \in \mathcal{C}}} {}^*\!X \boxtimes {}^{**}\!X \right)^{\!\!*}
    \cong \int_{X \in \mathcal{C}} ({}^*\!X \boxtimes {}^{**}\!X)^*
    \cong \int_{X \in \mathcal{C}} X \boxtimes {}^{***}\!X
  \end{equation}
  in $\mathcal{C}^{\env}$. Specifically, the object $A^*$ is an end of the above form with the universal dinatural transformation given by
  \begin{equation}
    \pi_X := (\din_{{}^{*}\!X})^*: A^* \to X \boxtimes {}^{***}\!X \quad (X \in \mathcal{C}).
  \end{equation}
  Now we fix an object $V \in \mathcal{C}$ and define $\mathrm{E}_{V} : \Lex(\mathcal{C}) \to \mathcal{C}$ by $F \mapsto F(V)$.
  Since $\Phi$ is an equivalence, and since $\mathrm{E}_{V}$ is left exact (see Subsection~\ref{subsec:fin-ab-cat}), we have
  \begin{align*}
    \Phi(A^*)(V)
    & \cong \textstyle \int_{X \in \mathcal{C}} \mathrm{E}_{V}(\Phi(X \boxtimes {}^{***}\!X)) \\
    & \cong \textstyle \int_{X \in \mathcal{C}} \Hom_{\mathcal{C}}({}^{**}\!X, V) \otimes_{\bfk} X \\
    & \cong \textstyle \int_{X \in \mathcal{C}} \Hom_{\mathcal{C}}(X, V^{**}) \otimes_{\bfk} X = \check{\Nak}(V)
  \end{align*}
  as $\bfk$-linear functors.

  As a next step, we examine the $\mathcal{C}$-bimodule structure of $N := \Phi(A^*)$.
  We fix an object $V \in \mathcal{C}$.
  For simplicity, we write $V^{***} = V^{*3}$, $V^{****} = V^{*4}$, etc.
  We define the morphism $\tau_V^{(\ell)} : (V^{*4}\boxtimes\unitobj) \otimes A^* \to (\unitobj \boxtimes V) \otimes A^*$ by
  \begin{equation}
    \tau^{(\ell)}_V = (\id_{\unitobj \boxtimes V} \otimes m^{\ddag}(\din_{V^{**}}^{**} \otimes \id_{A^*}))
    \circ ((\id_{V^{*4}} \boxtimes \coev_{{}^* V}) \otimes \id_{A^*}),
  \end{equation}
  where $m^{\ddag}$ is the left action of $A^{**}$ on $A^*$ given by \eqref{eq:cano-alg-m-dagger}.
  When we view $\mathcal{M} = \mathcal{C}^{\env}$ as a $\mathcal{C}$-bimodule category by the left action of $\mathcal{C}^{\env}$ on itself, $\tau^{(\ell)}$ is the inverse of the structure morphism of $A^*$ as an object of $\mathcal{Z}({}_{\langle S^4 \rangle}\mathcal{M})$. Thus the ($S^4$-twisted) left $\mathcal{C}$-module structure of $N$ is given by
  \begin{gather*}
    V^{*4} \otimes N(W)
    \mathop{=}^{\eqref{eq:env-action-Lex-l}} \Big((V^{*4} \boxtimes \unitobj) \catactl \Phi(A^*)\Big)(W)
    \cong \Phi\Big( (V^{*4} \boxtimes \unitobj) \otimes A^* \Big)(W) \\
    \xrightarrow{\ \Phi(\tau^{(\ell)}_{V})_W^{} \ }
    \Phi\Big( (\unitobj \boxtimes V) \otimes A^* \Big)(W)
    \cong \Big( (\unitobj \boxtimes V) \catactl \Phi(A^*) \Big)(W)
    \mathop{=}^{\eqref{eq:env-action-Lex-l}} N(V \otimes W)
  \end{gather*}
  for $V, W \in \mathcal{C}$, where $\cong$'s are the left $\mathcal{C}^{\env}$-module structure of $\Phi$. In a similar manner, we see that the right $\mathcal{C}$-module structure of $N$ is given by
  \begin{gather*}
    N(V) \otimes W
    \mathop{=}^{\eqref{eq:env-action-Lex-r}} \Big( \Phi(A^*) \catactr (W \boxtimes \unitobj) \Big)(V)
    \cong \Phi \Big(A^* \otimes (W \boxtimes \unitobj) \Big)(V) \\
    \xrightarrow{\ \Phi(\tau^{(r)}_{W})_V^{} \ }
    \Phi\Big( A^* \otimes (\unitobj \boxtimes {}^{**} W) \Big)(V)
    \cong \Big( \Phi(A^*) \catactr (\unitobj \boxtimes {}^{**} W) \Big)(V)
    \mathop{=}^{\eqref{eq:env-action-Lex-r}} N(V \otimes W)
  \end{gather*}
  for $V, W \in \mathcal{C}$. Here, $\cong$'s are the right $\mathcal{C}^{\env}$-module structure of $\Phi$ and
  \begin{equation}
    \tau^{(r)}_W = (m^{\dagger}(\id_{A^*} \otimes \din_{W}) \otimes \id_{\unitobj \boxtimes {}^{**} W})
    \circ (\id_{A^*} \otimes (\id_W \boxtimes \coev_{{}^{**} W})),
  \end{equation}
  where $m^{\dagger}$ is the right action of $A$ on $A^*$ given by~\eqref{eq:cano-alg-m-flat}.

  By comparing the above description of the $\mathcal{C}$-bimodule structure of $N$ and that of the functor $\check{\Nak}$, we see that, to complete the proof, it suffices to show that the morphisms $\tau^{(\ell)}_V$ and $\tau^{(r)}_W$ are equal to the isomorphisms obtained by Lemma \ref{lem:mod-func-adj} applied to
  $V^{*4} \otimes \id_{\mathcal{C}} \dashv V^{*3} \otimes \id_{\mathcal{C}}$
  and
  $\id_{\mathcal{C}} \otimes W \dashv \id_{\mathcal{C}} \otimes W^*$,
  respectively.

  By the definition of the multiplication $m$, we have
  \begin{align*}
    \pi_{X} \circ m^{\ddag} \circ (\din_{V^{**}}^{**} \otimes \id_{A^*})
    & = (\eval_{A^*} \otimes \pi_{X}) \circ (\pi_{V^{*3}}^* \otimes m^*) \\
    & = (\eval_{V^{*3} \boxtimes V} \otimes \id_{X \boxtimes {}^{***}\!X})
      (\id_{V^{*4} \boxtimes {}^{*}V} \otimes (\pi_{V^{*3}} \otimes \pi_X)m^*) \\
    & = (\eval_{V^{*3} \boxtimes V} \otimes \id_{X \boxtimes {}^{***}\!X})
      (\id_{V^{*4} \boxtimes {}^{*}V} \otimes \pi_{V^{*3} \otimes X})
  \end{align*}
  for $V, X \in \mathcal{C}$. Thus we compute
  \begin{align*}
    (\id_{V \boxtimes \unitobj} \otimes \pi_X) \circ \tau^{(\ell)}_V
    & = \begin{gathered}[t]
      (\id_{\unitobj \boxtimes V} \otimes (\eval_{V^{*3} \boxtimes V} \otimes \id_{X \boxtimes {}^{***}\!X})
      (\id_{V^{*4} \boxtimes {}^{*}V} \otimes \pi_{V^{*3} \otimes X})) \\
      {} \circ (\id_{\unitobj \boxtimes V} \otimes (\id_{V^{****}} \boxtimes \coev_{{}^* V}) \otimes \id_{A^*})
    \end{gathered} \\
    & = \begin{gathered}[t]
      ((\eval_{V^{*3}} \otimes \id_X) \boxtimes
      (\id_{{}^{***}\!X} \otimes (\id_{{}^* \! V} \otimes \eval_{{}^*V})(\coev_{{}^* \! V} \otimes \id_{{}^* \! V}))) \\
      {} \circ (\id_{V^{*4} \boxtimes {}^* \! V} \otimes \pi_{V^{*3} \otimes X})
    \end{gathered} \\
    & = ((\eval_{V^{*3}} \otimes \id_X) \boxtimes \id_{{}^{***}\!X \otimes V})
      \circ (\id_{V^{*4} \boxtimes {}^* \! V} \otimes \pi_{V^{*3} \otimes X}),
  \end{align*}
  where the equation $\eval_{M \boxtimes N} = \eval_M \boxtimes \eval_{{}^* \! N}$ in $\mathcal{C}^{\env}$ is used at the second equality. This shows that $\tau^{(\ell)}_V$ is the isomorphism obtained by applying  Lemma \ref{lem:mod-func-adj} to the adjunction $V^{*4} \otimes \id_{\mathcal{C}} \dashv V^{*3} \otimes \id_{\mathcal{C}}$. Noting the equation
  \begin{equation*}
    \pi_X \circ m^{\dagger} \circ (\id_{A^*} \otimes \din_W)
    = (\id_{X \boxtimes {}^{*3} \! X} \otimes \eval_{W \boxtimes {}^* \! W}) \circ (\pi_{W^* \otimes X} \otimes \id_{W \boxtimes {}^* \! W}),
  \end{equation*}
  one can check that $\tau^{(r)}_W$ is the isomorphism obtained by applying Lemma~\ref{lem:mod-func-adj} to the adjunction $\id_{\mathcal{C}} \otimes W \dashv \id_{\mathcal{C}} \otimes W^*$ in a similar way as above. The proof is done.
\end{proof}

The above lemma gives the following description of the Radford object: Since the bimodule $A^*$ corresponds to $\modobjb_{\mathcal{C}} = (\modobj_{\mathcal{C}}, \delta)$ via the equivalence \eqref{eq:ENO-Hopf-bimod-4}, we may assume that the object $\modobj_{\mathcal{C}}$ is given by
\begin{equation}
  \label{eq:Radford-by-Nakayama-1}
  \modobj_{\mathcal{C}}
  = \check{\Nak}(\unitobj)
  = \Nak_{\mathcal{C}}^{\ell}(\unitobj) = \int_{X \in \mathcal{C}} \Hom_{\mathcal{C}}(X, \unitobj) \otimes_{\bfk} X,
\end{equation}
and the Radford isomorphism is given by the composition
\begin{equation}
  \label{eq:Radford-by-Nakayama-2}
  \delta_{V} = \Big( \modobj_{\mathcal{C}} \otimes V
  \xrightarrow{\quad \xi^{(r)}_{\unitobj, V^{**}} \quad} \Nak^{\ell}_{\mathcal{C}}(V^{**})
  \xrightarrow{\quad (\xi^{(\ell)}_{V^{**}, \unitobj})^{-1} \quad}
  V^{****} \otimes \modobj_{\mathcal{C}} \Big)
\end{equation}
for $V \in \mathcal{C}$, where $\xi^{(\ell)}$ and $\xi^{(r)}$ are the left and the right $\mathcal{C}$-module structure of the Nakayama functor $\Nak^{\ell}_{\mathcal{C}} : \mathcal{C} \to {}_{\langle S^2 \rangle}\mathcal{C}_{\langle S^{-2}\rangle}$, respectively.

\begin{remark}
  By the above discussion, we conclude that the isomorphism $X^{**} \cong \modobj_{\mathcal{C}} \otimes {}^{**}X \otimes (\modobj_{\mathcal{C}})^*$ of tensor functors given in \cite[Corollary 4.12]{MR4042867} coincides with that given in \cite{MR2097289}.
\end{remark}

\subsection{Proof of Theorem~\ref{thm:relative-modulus}}

We give a proof of Theorem~\ref{thm:relative-modulus}.
The setting is as follows: $\mathcal{B}$ and $\mathcal{C}$ are finite tensor categories, $F : \mathcal{B} \to \mathcal{C}$ is a perfect tensor functor, and $\relmodb_F = (\relmod_F, \gamma_F)$ is the relative modular object of $F$.

Let $\kappa_X : F(X)^{**} \to F(X^{**})$ ($X \in \mathcal{B}$) the isomorphism obtained by using the duality transformation. We first remark the following technical lemma:

\begin{lemma}
  \label{lem:relative-modulus-pf-1}
  There is an isomorphism $p : \mu_F^{**} \to \mu_F^{\phantom{*}}$ in $\mathcal{B}$ such that
  \begin{equation}
    \label{eq:relative-modulus-pf-3}
    (\kappa_X \otimes p) \circ \gamma_F(X)^{**} = \gamma_F(X^{**}) \circ (p \otimes \kappa_X)
    \quad (X \in \mathcal{B}).
  \end{equation}
\end{lemma}
\begin{proof}
  Let, in general, $G$ and $H$ be tensor functors from $\mathcal{B}$ to $\mathcal{C}$. If $G \cong H$ as tensor functors, then $\mathcal{Z}(G, G) \cong \mathcal{Z}(H, H)$ as categories. One can check that $\relmodb_{G}$ corresponds to $\relmodb_{H}$ under this isomorphism.

  We consider the case where $G = S^2 F$ and $H = F S^2$. The natural isomorphism $\kappa: G \to H$ is actually an isomorphism of tensor functors. Noting that the double dual functor is an equivalence, one can check that $\relmodb_G$ is given by
  \begin{equation*}
    \relmodb_G = \Big(\mu_F^{**},
      \{ \mu_F^{**} \otimes F(X)^{**} \xrightarrow{\quad \gamma_F(X)^{**} \quad} F(X)^{**} \otimes \mu_F^{**} \}_{X \in \mathcal{B}} \Big),
  \end{equation*}
  while $\relmodb_H$ is given by
  \begin{equation*}
    \relmodb_H = \Big(\mu_F,
      \{ \mu_F \otimes F(X^{**}) \xrightarrow{\quad \gamma_F(X^{**}) \quad} F(X^{**}) \otimes \mu_F \}_{X \in \mathcal{B}} \Big).
  \end{equation*}
  By the above argument, there is an isomorphism $p: \relmodb_G \to \relmodb_H$ in $\mathcal{Z}(H,H)$ if we view $\relmodb_{G}$ as an object of $\mathcal{Z}(H,H)$ through the isomorphism $\kappa$. It follows from the definition that $p$ satisfies \eqref{eq:relative-modulus-pf-3}. The proof is done.
\end{proof}

By the perfectness of $F$ and Lemma~\ref{lem:Nakayama-double-adj-bimodule}, there is an isomorphism
\begin{equation}
  \label{eq:pf-rel-mod-1}
  F \circ \Nak_{\mathcal{B}}^{\ell} \cong \Nak_{\mathcal{C}}^{\ell} \circ F^{\rradj}
\end{equation}
of $\mathcal{B}$-bimodule functors from $\mathcal{B}$ to ${}_{\langle F S^2 \rangle}\mathcal{C}_{\langle F S^{-2} \rangle}$. We may view these functors as $\mathcal{B}$-bimodule functors from $\mathcal{B}$ to ${}_{\langle F S^4 \rangle}\mathcal{C}_{\langle F \rangle}$. The theorem will be proved by computing the objects of $\mathcal{Z}(F, F S^4)$ corresponding to the both sides of \eqref{eq:pf-rel-mod-1}.

By \eqref{eq:Radford-by-Nakayama-1} and \eqref{eq:Radford-by-Nakayama-2}, we easily see that the left-hand side of~\eqref{eq:pf-rel-mod-1} corresponds to the object $F(\modobjb_{\mathcal{B}}) \in \mathcal{Z}(F, F S^4)$. Now we consider the right-hand side. Let $\xi^{(\ell)}$ and $\xi^{(r)}$ denote the left and the right $\mathcal{C}$-module structure of $\Nak := \Nak^{\ell}_{\mathcal{C}}$, respectively. In \eqref{eq:pf-rel-mod-1}, $\Nak$ is, rather, dealt as a $\mathcal{B}$-bimodule functor. We denote by
\begin{equation*}
  \Xi^{(\ell)}_{X,V} : X^{**} \catactl_F \Nak(V) \to \Nak(X \catactl_F V), \quad
  \Xi^{(r)}_{V,X} : \Nak(V) \catactr_F {}^{**}\!X \to \Nak(V \catactr_F X)
\end{equation*}
the $\mathcal{B}$-bimodule structure of $\Nak$.

\begin{lemma}
  \label{lem:relative-modulus-pf-2}
  For $X \in \mathcal{B}$ and $V \in \mathcal{C}$, we have
  \begin{equation*}
    \xi^{(\ell)}_{F(X),V} = \Xi^{(\ell)}_{X,V} \circ (\kappa_X \otimes \id_{\Nak(V)})
    \quad \text{and} \quad
    \xi^{(r)}_{V,F(X)} \circ (\id_{\Nak(V)} \otimes \,{}^{**}\!\kappa_{X^{**}}) = \Xi^{(r)}_{V,X}.
  \end{equation*}
\end{lemma}
\begin{proof}
  We fix an object $X \in \mathcal{B}$ and define three $\bfk$-linear endofunctors $T$, $T_1$ and $T_2$ on $\mathcal{C}$ by the following formulas:
  \begin{equation*}
    T(V) = F(X) \otimes V,
    \quad T_1(V) = F(X)^{**} \otimes V,
    \quad T_2(V) = F(X^{**}) \otimes V.
  \end{equation*}
  Both $T_1$ and $T_2$ are double left adjoints of $T$, and hence there is a canonical isomorphism $\nu : T_1 \to T_2$. By the construction, we have $\nu_V = \kappa_X \otimes \id_V$ for all objects $V \in \mathcal{C}$. Now the first equation of this lemma is proved by applying the argument of Remark~\ref{rem:Nakayama-double-adj-naturality} to $F_i^{\lladj} = T_i$. The latter equation can be proved in a similar way.
\end{proof}

\begin{proof}[Proof of Theorem~\ref{thm:relative-modulus}]
  We consider the isomorphism
  \begin{equation*}
    q := \Big(
    \modobj \otimes \mu
    \xrightarrow{\quad \id \otimes {}^{**}p \quad}
    \Nak(\unitobj) \otimes {}^{**}\!\mu
    \xrightarrow{\quad \xi^{(r)}_{\unitobj,\mu} \quad}
    \Nak(\mu) \Big),
  \end{equation*}
  where $\Nak = \Nak^{\ell}_{\mathcal{C}}$, $\modobj = \modobj_{\mathcal{C}}$ ($=N(\unitobj)$), $\mu = \mu_F$ and $p : \mu \to \mu^{**}$ is the isomorphism given by Lemma~\ref{lem:relative-modulus-pf-1}. For $X \in \mathcal{B}$, we set $\widetilde{\kappa}_X = \kappa_{X^{**}} \circ \kappa_X^{**}$. Then we have the commutative diagram given as Figure~\ref{fig:pf-relative-modulus-1}.
  Let $\tau_X$ and $\sigma_X$ be the morphisms obtained by composing the left and the right column, respectively.
  Thus we have the following commutative diagram:
  \begin{equation*}
    \begin{tikzcd}[column sep = 10em]
      \Nak(\mu) \otimes F(X)
      \arrow[d, "\sigma_X"']
      \arrow[r, "q \otimes \id_{F(X)}"]
      & \modobj \otimes \mu \otimes F(X)
      \arrow[d, "\tau_X"] \\
      F(X^{****}) \otimes \Nak(\mu)
      \arrow[r, "\id_{F(X^{****})} \otimes q"]
      & F(X^{****}) \otimes \modobj \otimes \mu
    \end{tikzcd}
  \end{equation*}
  By Lemma~\ref{lem:relative-modulus-pf-2}, the $\mathcal{B}$-bimodule functor $\Nak \circ F^{\rradj} : \mathcal{B} \to {}_{\langle F S^4 \rangle} \mathcal{C}_{\langle F \rangle}$ corresponds to $(\Nak(\mu), \sigma) \in \mathcal{Z}(F, F S^4)$.
  If we identify $S^4 F$ with $F S^4$ via the isomorphism $\widetilde{\kappa}$, then $\modobjb_{\mathcal{C}} \otimes \relmodb_F = (\modobj \otimes \mu, \tau)$. The above commutative diagram means that $(\Nak(\mu), \sigma)$ and $\modobjb_{\mathcal{C}} \otimes \relmodb_F$ are isomorphic as objects of $\mathcal{Z}(F, F S^4)$. Hence, by \eqref{eq:pf-rel-mod-1}, we have
  \begin{equation*}
    F(\modobjb_{\mathcal{B}}) \cong (\Nak(\mu), \sigma) \cong \modobjb_{\mathcal{C}} \otimes \relmodb_F.
    \qedhere
  \end{equation*}
\end{proof}

\begin{figure}
  \centering
  \begin{tikzcd}[column sep = 5em, row sep = 2.5em]
    \Nak(\mu) \otimes F(X)
    \arrow[d, "\xi^{(r)}_{\mu,F(X)^{**}}"']
    & \modobj \otimes {}^{**}\!\mu \otimes F(X)
    \arrow[l, "\xi^{(r)}_{\unitobj, \mu} \otimes \id"']
    \arrow[dl, "\smash{\xi^{(r)}_{\unitobj, \mu \otimes F(X)^{**}}}\ "']
    & \modobj \otimes \mu \otimes F(X)
    \arrow[l, "\id \otimes {}^{**}p \otimes \id"']
    \arrow[dl, "\smash{\xi^{(r)}_{\unitobj, \mu^{**} \otimes F(X)^{**}}}\ "']
    \arrow[dd, "\id \otimes \gamma_F(X)"]
    \\[+1em] 
    \Nak(\mu \otimes F(X)^{**})
    \arrow[d, "\Nak(\id \otimes \kappa_X)"']
    & \Nak(\mu^{**} \otimes F(X)^{**})
    \arrow[l, "\Nak(p \otimes \id)"]
    \arrow[dd, pos=.3, "\Nak(\gamma_F(X)^{**})"]
    \arrow[ddl, phantom, "\scriptstyle \text{(Lemma~\ref{lem:relative-modulus-pf-1})}"]
    \\ 
    \Nak(\mu \otimes F(X^{**}))
    \arrow[d, "\Nak(\gamma_F(X^{**}))"']
    & & \modobj \otimes F(X) \otimes \mu
    \arrow[dl, "\smash{\xi^{(r)}_{\unitobj,F(X)^{**}\otimes \mu^{**}}}\ "']
    \arrow[d, "\xi^{(r)}_{\unitobj,F(X)^{**}}"]
    \\[+1em] 
    \Nak(F(X^{**}) \otimes \mu)
    \arrow[d, "(\xi^{(\ell)}_{F(X^{**}),\mu})^{-1}"']
    & \Nak(F(X)^{**} \otimes \mu^{**})
    \arrow[l, "\Nak(\kappa_X \otimes p)"']
    \arrow[d, "(\xi^{(\ell)}_{F(X^{**}),\mu^{**}})^{-1}"']
    & \Nak(F(X)^{**}) \otimes \mu
    \arrow[l, "\xi^{(r)}_{F(X)^{**},\mu^{**}}"]
    \arrow[d, "(\xi^{(\ell)}_{F(X)^{**},\unitobj})^{-1}"]
    \\[+0em] 
    F(X^{**})^{**} \otimes \Nak(\mu)
    \arrow[d, "\kappa_{X^{**}}"']
    & F(X)^{****} \otimes \Nak(\mu^{**})
    \arrow[l, "\kappa_X^{**} \otimes \Nak(p)"]
    \arrow[d, "\widetilde{\kappa}_X"]
    & F(X)^{****} \otimes \modobj \otimes \mu
    \arrow[l, "\id \otimes \xi^{(r)}_{\unitobj,\mu^{**}}"]
    \arrow[d, "\widetilde{\kappa}_X \otimes \id \otimes \id"]
    \\ 
    F(X^{****}) \otimes \Nak(\mu)
    & F(X^{****}) \otimes \Nak(\mu^{**})
    \arrow[l, "\id \otimes \Nak(p)"]
    & F(X^{****}) \otimes \modobj \otimes \mu
    \arrow[l, "\id \otimes \xi^{(r)}_{\unitobj,\mu^{**}}"]
  \end{tikzcd}
  \label{fig:pf-relative-modulus-1}
  \caption{Proof of Theorem~\ref{thm:relative-modulus}}
\end{figure}

\section{Ribbon structures of the Drinfeld center}
\label{sec:main-result}

\subsection{Ribbon pivotal structure}

In this section, we classify the ribbon structures of the Drinfeld center of a finite tensor category. Let $\mathcal{B}$ be a braided rigid monoidal category with braiding $\sigma$. We recall that a {\em twist} (or a {\em ribbon structure}) of $\mathcal{B}$ is a natural isomorphism $\theta: \id_{\mathcal{B}} \to \id_{\mathcal{B}}$ such that the equations
\begin{gather}
  \label{eq:twist-def-1}
  \theta_{X \otimes Y} = (\theta_X \otimes \theta_Y) \circ \sigma_{Y,X} \circ \sigma_{X,Y} \\
  \label{eq:twist-def-2}
  (\theta_X)^* = \theta_{X^*}
\end{gather}
hold for all objects $X, Y \in \mathcal{B}$. We denote by $S_{\mathcal{B}}$ the left duality functor of $\mathcal{B}$. The {\em Drinfeld isomorphism} is the natural isomorphism $u: \id_{\mathcal{B}} \to S_{\mathcal{B}}^2$ defined by
\begin{equation}
  \label{eq:Dri-iso-def}
  \renewcommand{\xarrlen}{3.5em}
  u_X = \Big( X \xarr{\id \otimes \coev}
  X \otimes X^* \otimes X^{**}
  \xarr{\sigma \otimes \id}
  X^* \otimes X \otimes X^{**}
  \xarr{\eval \otimes \id} X^{**} \Big)
\end{equation}
for $X \in \mathcal{B}$. It is well-known that $u$ satisfies
\begin{equation}
  \label{eq:Dri-iso-tensor}
  u_{X \otimes Y} = (u_X \otimes u_Y)  \sigma_{X,Y}^{-1} \sigma_{Y,X}^{-1}
\end{equation}
for all objects $X, Y \in \mathcal{B}$ \cite[Proposition 8.9.3]{MR3242743}.

\begin{definition}
  \label{def:ribbon-pivot}
  Let $\mathcal{B}$ and $u$ be as above. A {\em ribbon pivotal structure} of $\mathcal{B}$ is a pivotal structure $j$ of $\mathcal{B}$ such that the equation
  \begin{equation}
    \label{eq:w-iso}
    j_X^{**} \circ j_X^{\phantom{*}} = u_{X}^{**} \circ (u_{\, {}^* \! X}^{*})^{-1}
  \end{equation}
  holds for all $X \in \mathcal{B}$.
\end{definition}

This terminology is justified by the following lemma:

\begin{lemma}
  \label{lem:ribbon-pivot}
  Let $\mathcal{B}$ and $u$ be as above. A natural transformation $\theta: \id_{\mathcal{B}} \to \id_{\mathcal{B}}$ is a twist if and only if $j := u \theta$ is a ribbon pivotal structure.
\end{lemma}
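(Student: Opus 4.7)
The plan is to establish the equivalence by identifying which axiom of the twist corresponds to which part of the ribbon pivotal condition. Since $u$ and $\theta$ are natural isomorphisms, so is $j := u\theta$, and the assignment $\theta \mapsto u\theta$ gives a bijection on natural self-isomorphisms of $\id_{\mathcal{B}}$. It therefore suffices to check two equivalences separately: that \eqref{eq:twist-def-1} for $\theta$ is equivalent to $j$ being a monoidal (hence pivotal) natural isomorphism, and that \eqref{eq:twist-def-2} for $\theta$ is equivalent to \eqref{eq:w-iso} for $j$.

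For the first, I would compute
\begin{align*}
j_{X \otimes Y}
&= u_{X \otimes Y} \circ \theta_{X \otimes Y}
= (u_X \otimes u_Y) \circ \sigma_{X,Y}^{-1} \sigma_{Y,X}^{-1} \circ (\theta_X \otimes \theta_Y) \circ \sigma_{Y,X} \sigma_{X,Y}
\end{align*}
using \eqref{eq:Dri-iso-tensor} and \eqref{eq:twist-def-1}. Two applications of the naturality of $\sigma$ — first $\sigma_{Y,X}^{-1}(\theta_X \otimes \theta_Y)\sigma_{Y,X} = \theta_Y \otimes \theta_X$, then conjugation by $\sigma_{X,Y}$ — collapse the braidings and leave $j_{X \otimes Y} = (u_X\theta_X) \otimes (u_Y\theta_Y) = j_X \otimes j_Y$. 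Running the same computation backwards shows that monoidality of $j$ forces \eqref{eq:twist-def-1} on $\theta = u^{-1} j$.

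For the second, naturality of $u$ applied to the morphism $\theta_X \colon X \to X$ gives $\theta_X^{**} \circ u_X = u_X \circ \theta_X$. Substituting this into the left-hand side of \eqref{eq:w-iso} yields
\[
j_X^{**} \circ j_X
= u_X^{**} \circ \theta_X^{**} \circ u_X \circ \theta_X
= u_X^{**} \circ u_X \circ \theta_X^2,
\]
so \eqref{eq:w-iso} is equivalent, after cancelling the isomorphism $u_X^{**}$, to the square identity
\begin{equation*}
\theta_X^2 = u_X^{-1} \circ (u_{{}^*X}^*)^{-1}.
\end{equation*}

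The main obstacle is to show, in the presence of \eqref{eq:twist-def-1}, that this square identity is equivalent to the duality condition \eqref{eq:twist-def-2} $\theta_{X^*} = (\theta_X)^*$. This is the classical ``balancing equals ribbon'' fact for braided rigid categories, a categorical analogue of Radford's $S^4$ computation; see, for instance, Kassel's \emph{Quantum Groups}, Theorem XIV.3.3. The strategy is to expand $u_{{}^*X}$ using the definition of the Drinfeld isomorphism in terms of $\coev_X$, $\sigma_{{}^*X, X}$, and $\eval_{{}^*X}$, then dualise to rewrite $(u_{{}^*X}^*)^{-1}$ as a composite involving only $u_X$, $\theta_X$, $\theta_{X^*}^*$, and the braiding, and finally apply \eqref{eq:twist-def-1} to the tensor product $X \otimes {}^*X$ together with the snake identities for the rigid duality. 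This reduces the square identity to the single equality $\theta_{X^*} = \theta_X^*$; reversing the graphical manipulation establishes the converse. Combining the two equivalences proves the lemma.
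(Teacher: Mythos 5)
Your first equivalence (monoidality of $j$ versus \eqref{eq:twist-def-1} for $\theta$) is correctly established, and your simplification of \eqref{eq:w-iso} to the ``square identity'' $\theta_X^2 = u_X^{-1}\circ(u_{{}^*\!X}^{*})^{-1}$, using naturality of $u$ and functoriality of $(-)^{**}$, is a valid and clean reduction.

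The gap is in the final step. You assert that, given \eqref{eq:twist-def-1}, the square identity is equivalent to \eqref{eq:twist-def-2}, but you do not prove it: you cite a ``classical fact'' from Kassel and sketch a graphical strategy (expand $u_{{}^*\!X}$, dualise, apply snake identities) without carrying it out or confirming that the cited theorem is stated in a form that applies here. This is precisely the step that needs work, and the paper closes it with a different tool you never invoke: the identity $j_X^* = (j_{X^*})^{-1}$, valid for \emph{any} pivotal structure $j$ (equation \eqref{eq:pivot-inverse}, from Schauenburg). Once you have that, the equivalence is immediate from your own reduction: writing $\theta_X^* = j_X^*(u_X^*)^{-1} = j_{X^*}^{-1}(u_X^*)^{-1} = \theta_{X^*}^{-1}\,u_{X^*}^{-1}(u_X^*)^{-1}$ and substituting the square identity at $X^*$ (noting ${}^*(X^*) = X$) yields $\theta_X^* = \theta_{X^*}^{-1}\theta_{X^*}^2 = \theta_{X^*}$, and the computation reverses. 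Without \eqref{eq:pivot-inverse} or an actual execution of the graphical argument, the proof is incomplete.
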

\begin{proof}
  To prove this lemma, we recall from \cite[Appendix A]{MR2095575} that every pivotal structure $j$ of a rigid monoidal category $\mathcal{C}$ satisfies the following equation:
  \begin{equation}
    \label{eq:pivot-inverse}
    j_{X}^* = (j_{X^*}^{\phantom{*}})^{-1} \quad (X \in \mathcal{C}).
  \end{equation}
  Let $\theta: \id_{\mathcal{B}} \to \id_{\mathcal{B}}$ be a natural transformation, and set $j = u \theta$. Suppose that $\theta$ is a twist. Then, by~\eqref{eq:twist-def-1} and~\eqref{eq:Dri-iso-tensor}, we see that $j$ is a pivotal structure of $\mathcal{C}$.
  By~\eqref{eq:twist-def-2} and~\eqref{eq:pivot-inverse}, we compute
  \begin{equation*}
    u^{-1}_X \circ j_X = \theta_X = (\theta_{{}^* \! X})^*
    = (j_{\, {}^* \! X})^* \circ (u_{\, {}^* \! X}^{-1})^*
    = j_X^{-1} \circ (u_{\, {}^* \! X}^*)^{-1}
  \end{equation*}
  for all $X \in \mathcal{B}$. We also note $j_{X^{**}} = (j_X)^{**}$ ($X \in \mathcal{B}$). By the naturality of $j$ and the above computation, we verify \eqref{eq:w-iso} as follows:
  \begin{align*}
    j_{X}^{**} \circ j_X
    & = j_{X}^{**} \circ u_X \circ j_X^{-1} \circ (u_{\, {}^* \! X}^*)^{-1} \\
    & = j_{X}^{**} \circ j_{X^{**}}^{-1} \circ u_X^{**} \circ (u_{\, {}^* \! X}^*)^{-1}
      = u_X^{**} \circ (u_{\, {}^* \! X}^*)^{-1}.
  \end{align*}
  Namely, $j$ is a ribbon pivotal structure. Suppose, conversely, that $j$ is a ribbon pivotal structure of $\mathcal{B}$. Then it is easy to see that $\theta: \id_{\mathcal{B}} \to \id_{\mathcal{B}}$ is a natural isomorphism satisfying \eqref{eq:twist-def-1}. By~\eqref{eq:w-iso} and~\eqref{eq:pivot-inverse}, we have
  \begin{align*}
    \theta_X^*
    = j_{X}^* \circ (u_X^{-1})^*
    & = j_{X}^* \circ (u_{X^*}^{**})^{-1} \circ j_{X^*}^{**} \circ j_{X^*} \\
    & = j_{X}^* \circ (u_{X^*}^{-1})^{**} \circ j_{X^{***}} \circ j_{X^*} \\
    & = j_{X}^* \circ j_{X^*} \circ u_{X^*}^{-1} \circ j_{X^*} =\theta_{X^*}
  \end{align*}
  for all $X \in \mathcal{B}$. Hence $\theta$ is a twist. The proof is done.
\end{proof}

\subsection{Ribbon pivotal structures of the Drinfeld center}

Let $\mathcal{C}$ be a finite tensor category. In view of Lemma~\ref{lem:ribbon-pivot}, we classify the ribbon pivotal structures of $\mathcal{Z}(\mathcal{C})$ instead of the ribbon structures of $\mathcal{Z}(\mathcal{C})$.  To state our classification result, we first recall a result from \cite{2016arXiv160805905S} used to classify the pivotal structures of $\mathcal{Z}(\mathcal{C})$.

Given a tensor autoequivalence $F$ of $\mathcal{C}$, we denote by $\widetilde{F}: \mathcal{Z}(\mathcal{C}) \to \mathcal{Z}(\mathcal{C})$ the braided tensor autoequivalence of $\mathcal{Z}(\mathcal{C})$ induced by $F$. Namely, it is defined by
\begin{equation*}
  \widetilde{F}(\mathbf{V}) = (F(V), \sigma_{F(V)})
\end{equation*}
for $\mathbf{V} = (V, \sigma_V) \in \mathcal{Z}(\mathcal{C})$, where $\sigma_{F(V)}: F(V) \otimes (-) \to (-) \otimes F(V)$ is the half-braiding uniquely determined by the property that the equation
\begin{equation*}
  F_2(X, V) \circ \sigma_{F(V)}(F(X)) = F(\sigma_V(X)) \circ F_2(V, X)
\end{equation*}
holds for all $X \in \mathcal{C}$. By reformulating \cite[Theorem 4.1]{2016arXiv160805905S} in our notation, we obtain the following theorem:

\begin{theorem}
  \label{thm:nat-tensor-bij}
  Let $F$ and $G$ be tensor autoequivalences of $\mathcal{C}$. Given an invertible object $\boldsymbol{\beta} = (\beta, \tau)$ of $\mathcal{Z}(F, G)$, we define $\Theta(\boldsymbol{\beta}): \widetilde{F} \to \widetilde{G}$ by
  \begin{equation*}
    \renewcommand{\xarrlen}{5em}
    \begin{aligned}
      \Theta(\boldsymbol{\beta})_{\mathbf{V}}
      = \Big( F(V)
      & \xarr{\id \otimes \coev_{\beta}}
      F(V) \otimes \beta \otimes \beta^*
        \xarr{\sigma_{F(V)}(\beta) \otimes \id}
      \beta \otimes F(V) \otimes \beta^* \\
      & \xarr{\tau_V \otimes \id} G(V) \otimes \beta \otimes \beta^*
      \xarr{\id \otimes \coev_{\beta}^{-1}} G(V) \Big)
    \end{aligned}
  \end{equation*}
  for $\mathbf{V} = (V, \sigma_V) \in \mathcal{Z}(\mathcal{C})$ with $\widetilde{F}(\mathbf{V}) = (F(V), \sigma_{F(V)})$. Then the map
  \begin{equation*}
    \Theta: \Inv(\mathcal{Z}(F, G)) \to \Nat_{\otimes}(\widetilde{F}, \widetilde{G}),
    \quad [\boldsymbol{\beta}] \mapsto \Theta(\boldsymbol{\beta})
  \end{equation*}
  is a well-defined bijection.
\end{theorem}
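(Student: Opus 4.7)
The plan is to establish Theorem~\ref{thm:nat-tensor-bij} in two movements: first verifying that the explicit formula for $\Theta(\boldsymbol{\beta})$ indeed lands in $\Nat_{\otimes}(\widetilde{F}, \widetilde{G})$ and descends to isomorphism classes, then obtaining bijectivity by translating the assertion into the language of Theorem 4.1 of \cite{2016arXiv160805905S}, which provides the analogous classification phrased in terms of monoidal natural isomorphisms between tensor endofunctors together with compatibility data with a half-braiding. The bulk of the argument is a bookkeeping calculation; the conceptual content is contained in the cited theorem.

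For well-definedness, naturality of $\Theta(\boldsymbol{\beta})$ in $\mathbf{V} \in \mathcal{Z}(\mathcal{C})$ follows by a short diagram chase: every arrow in the defining composite is natural in $V$, using naturality of $\sigma_{F(V)}$ in the half-braiding entry (which is encoded in $\widetilde{F}$ being a functor to $\mathcal{Z}(\mathcal{C})$) and naturality of $\tau$. To verify monoidality, I would expand $\widetilde{F}_2(\mathbf{V}, \mathbf{W})$ and $\widetilde{G}_2(\mathbf{V}, \mathbf{W})$ using the formula relating $\sigma_{F(V) \otimes F(W)}$ to $\sigma_{F(V)}$ and $\sigma_{F(W)}$ via $F_2$, insert $\coev_\beta \circ \coev_\beta^{-1}$ at an appropriate tensor factor, and then apply the tensoriality axiom for $(\beta, \tau) \in \mathcal{Z}(F, G)$, namely the second displayed equation in Subsection~\ref{subsec:Dri-cen-var}, which splits $\tau_{V \otimes W}$ in terms of $\tau_V$ and $\tau_W$. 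Compatibility with the units is analogously extracted from the axiom $\sigma_V(\unitobj) \circ (\id \otimes F_0) = G_0 \otimes \id$. That $\Theta(\boldsymbol{\beta})$ depends only on $[\boldsymbol{\beta}]$ is immediate: an isomorphism $f: \boldsymbol{\beta} \to \boldsymbol{\beta}'$ intertwines the half-braidings $\tau$ and $\tau'$, and the factors $f, f^{-1}$ introduced at the two coevaluations cancel.

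For bijectivity, I would invoke the main theorem of \cite{2016arXiv160805905S}, which classifies, in equivalent combinatorial data, monoidal natural transformations $\widetilde{F} \to \widetilde{G}$ between the induced autoequivalences of $\mathcal{Z}(\mathcal{C})$. The task is then to match the data of an invertible object $\boldsymbol{\beta} = (\beta, \tau) \in \mathcal{Z}(F, G)$ with the data used in that paper, and to show that under this dictionary the map $\Theta$ defined here coincides with the bijection constructed there. Concretely, the inverse of $\Theta$ sends a monoidal $\eta: \widetilde{F} \to \widetilde{G}$ to the invertible object whose underlying object $\beta \in \mathcal{D}$ and half-braiding $\tau$ are recovered by evaluating $\eta$ on suitable objects of $\mathcal{Z}(\mathcal{C})$ (in particular, those built from the free half-braiding construction associated to the coend $A$ of Subsection~\ref{subsec:monadic-Z-F-G}); the invertibility of the recovered object follows because every monoidal natural transformation between tensor equivalences is automatically an isomorphism.

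The main obstacle is keeping the two sides of the dictionary aligned: the formula defining $\Theta(\boldsymbol{\beta})_{\mathbf{V}}$ uses $\sigma_{F(V)}$ rather than the half-braiding $\sigma_V$ of $\mathbf{V}$, so a routine but notation-heavy diagram chase using the defining property $F_2(X, V) \circ \sigma_{F(V)}(F(X)) = F(\sigma_V(X)) \circ F_2(V, X)$ is required to verify that the composite agrees, on the nose, with the map constructed in \cite[Thm. 4.1]{2016arXiv160805905S}. Once this identification is made, well-definedness and bijectivity transfer directly from the cited result, completing the proof.
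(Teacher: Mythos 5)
Your proposal is essentially the paper's own argument: both treat \cite[Theorem 4.1]{2016arXiv160805905S} as the key ingredient and reduce the claim to matching the data of an invertible object $(\beta,\tau) \in \mathcal{Z}(F,G)$ with the combinatorial data classified there. The only real difference is presentational. The paper factors the bijection through an explicit intermediate set: it introduces $\mathscr{J}(F,G)$, the class of pairs $(\beta, j)$ with $\beta$ invertible and $j_X \colon \beta \otimes F(X) \otimes \beta^* \to G(X)$ a monoidal natural transformation, shows that $(\beta,\tau) \mapsto (\beta, j^\tau)$ with $j^\tau_X = (\id \otimes \coev_\beta^{-1}) \circ (\tau_X \otimes \id_{\beta^*})$ is a bijection onto $\mathscr{J}(F,G)$ descending to a bijection $\Inv(\mathcal{Z}(F,G)) \to J(F,G) := \mathscr{J}(F,G)/\!\sim$, and then composes with the bijection $J(F,G) \to \Nat_\otimes(\widetilde{F},\widetilde{G})$ of \cite{2016arXiv160805905S}. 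Because $\Theta$ is defined as this composite of bijections, well-definedness, monoidality, descent to isomorphism classes, and bijectivity all come for free; you spend a paragraph re-deriving these from the explicit formula, which is unnecessary work in this setup, though not wrong. Two small slips: $\beta$ lives in $\mathcal{C}$, not $\mathcal{D}$, since $F, G$ are autoequivalences of $\mathcal{C}$; and the appeal to the ``free half-braiding construction associated to the coend $A$'' for recovering the inverse is not how the dictionary works --- the inverse comes from the explicit inverse bijection of \cite[Theorem 4.1]{2016arXiv160805905S}, not from evaluating $\eta$ on free objects.
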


Let $\boldsymbol{\beta} = (\beta, \tau)$ be an invertible object of $\mathcal{Z}(F, G)$. Since $\beta$ is an invertible object of $\mathcal{C}$, the natural transformation $\Theta(\boldsymbol{\beta})$ can also be defined by
\begin{equation}
  \label{eq:Theta-beta-another-def}
  \renewcommand{\xarrlen}{4em}
  \Theta(\boldsymbol{\beta})_{\mathbf{V}} \otimes \id_{\beta}
  = \Big( F(V) \otimes \beta
  \xarr{\sigma_{F(V)}(\beta)}
  \beta \otimes F(V)
  \xarr{\tau_V}
  G(B) \otimes \beta
  \Big)
\end{equation}
for $\mathbf{V} = (V, \sigma_V) \in \mathcal{Z}(\mathcal{C})$ with $\widetilde{F}(\mathbf{V}) = (F(V), \sigma_{F(V)})$.

\begin{proof}
  We denote by $\mathscr{J}(F, G)$ the class of all pairs $(\beta, j)$ consisting of an invertible object $\beta \in \mathcal{C}$ and a monoidal natural transformation
  \begin{equation*}
    j_X: \beta \otimes F(X) \otimes \beta^* \to G(X) \quad (X \in \mathcal{C}).
  \end{equation*}
  Given an invertible object $\boldsymbol{\beta} = (\beta, \tau) \in \mathcal{Z}(F, G)$, we define $j^{\boldsymbol{\beta}}$ by
  \begin{equation*}
    j^{\boldsymbol{\beta}}_X
    = \Big( \beta \otimes F(X) \otimes \beta^*
    \xarq{\tau_{X} \otimes \beta^*}
    G(X) \otimes \beta \otimes \beta^*
    \xarq{\id \otimes \coev_{\beta}^{-1}} G(X) \Big)
  \end{equation*}
  for $X \in \mathcal{C}$. The map $(\beta, \tau) \mapsto (\beta, j^{\boldsymbol{\beta}})$ gives a bijection between the class of invertible objects of $\mathcal{Z}(F, G)$ and the class $\mathscr{J}(F, G)$. Moreover, this bijection induces a bijection between the set $\Inv(\mathcal{Z}(F, G))$ and the set $J(F, G) := \mathscr{J}(F, G) / \mathord{\sim}$, where $\mathord{\sim}$ is the equivalence relation on $\mathscr{J}(F, G)$ introduced in \cite{2016arXiv160805905S}. We have constructed a bijection between $J(F, G)$ and $\Nat_{\otimes}(\widetilde{F}, \widetilde{G})$ \cite[Theorem 4.1]{2016arXiv160805905S}. We obtain the bijection $\Theta$ by composing these two bijections.
\end{proof}

Let $S_{\mathcal{C}}$ and $S_{\mathcal{Z}(\mathcal{C})}$ be the left duality functors of $\mathcal{C}$ and $\mathcal{Z}(\mathcal{C})$, respectively. Note that $S_{\mathcal{C}}^2$ is an tensor autoequivalence of $\mathcal{C}$ inducing $S_{\mathcal{Z}(\mathcal{C})}^2$. Now the main result of this paper is stated as follows:

\begin{theorem}
  \label{thm:classification}
  Let $\mathcal{C}$ be a finite tensor category, and let $\modobjb_{\mathcal{C}} \in \mathcal{Z}(\id_{\mathcal{C}}, S_{\mathcal{C}}^4)$ be the Radford object of $\mathcal{C}$. Then the bijection
  \begin{equation*}
    \Theta: \Inv(\mathcal{Z}(\id_{\mathcal{C}}^{}, S_{\mathcal{C}}^2))
    \to \Nat_{\otimes}(\id_{\mathcal{Z}(\mathcal{C})}^{}, S_{\mathcal{Z}(\mathcal{C})}^2)
  \end{equation*}
  given in Theorem~\ref{thm:nat-tensor-bij} restricts to a bijection between the set
  \begin{equation*}
    \Big\{ [\boldsymbol{\beta}] \in \Inv(\mathcal{Z}(\id_{\mathcal{C}}^{}, S_{\mathcal{C}}^2))
    \,\Big|\, S_{\mathcal{C}}^2(\boldsymbol{\beta}) \otimes \boldsymbol{\beta} \cong \modobjb_{\mathcal{C}} \Big\}
  \end{equation*}
  of `square roots' of $\modobjb_{\mathcal{C}}$ and the set of ribbon pivotal structures of $\mathcal{Z}(\mathcal{C})$.
\end{theorem}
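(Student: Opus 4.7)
The strategy is to translate the defining equation~\eqref{eq:w-iso} of a ribbon pivotal structure, via the bijection $\Theta$ of Theorem~\ref{thm:nat-tensor-bij}, into a condition on the corresponding invertible object $\boldsymbol{\beta} \in \Inv(\mathcal{Z}(\id_{\mathcal{C}}, \Dual_{\mathcal{C}}^2))$, and then to identify the resulting condition as $\Dual_{\mathcal{C}}^2(\boldsymbol{\beta}) \otimes \boldsymbol{\beta} \cong \modobjb_{\mathcal{C}}$.

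First I would establish the multiplicativity of $\Theta$ with respect to the horizontal composition in $\mathfrak{TF}(\mathcal{C}, \mathcal{C})$: for invertible 1-cells $\boldsymbol{\alpha} \in \mathcal{Z}(G, H)$ and $\boldsymbol{\beta} \in \mathcal{Z}(F, G)$, one has $\Theta(\boldsymbol{\alpha} \otimes \boldsymbol{\beta}) = \Theta(\boldsymbol{\alpha}) \circ \Theta(\boldsymbol{\beta})$ as natural transformations $\widetilde{F} \to \widetilde{H}$, and the whiskered object $\Dual_{\mathcal{C}}^2(\boldsymbol{\beta}) \in \mathcal{Z}(\Dual_{\mathcal{C}}^2, \Dual_{\mathcal{C}}^4)$ satisfies $\Theta(\Dual_{\mathcal{C}}^2(\boldsymbol{\beta})) = \Theta(\boldsymbol{\beta})^{**}$. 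Both follow directly from the explicit formula~\eqref{eq:Theta-beta-another-def} by chasing through the evaluation and coevaluation maps. In particular, for $\boldsymbol{\beta} \in \mathcal{Z}(\id_{\mathcal{C}}, \Dual_{\mathcal{C}}^2)$ and $j := \Theta(\boldsymbol{\beta})$, we obtain
\[
\Theta(\Dual_{\mathcal{C}}^2(\boldsymbol{\beta}) \otimes \boldsymbol{\beta}) = j^{**} \circ j
\]
as natural transformations $\id_{\mathcal{Z}(\mathcal{C})} \to \Dual_{\mathcal{Z}(\mathcal{C})}^4$.

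Next I would identify the right-hand side of~\eqref{eq:w-iso} with the Radford isomorphism of $\mathcal{Z}(\mathcal{C})$. Since $\mathcal{Z}(\mathcal{C})$ is a braided finite tensor category, its distinguished invertible object is $\unitobj$, so $\modobjb_{\mathcal{Z}(\mathcal{C})} = (\unitobj, \gamma)$ for a canonical natural isomorphism $\gamma: \id_{\mathcal{Z}(\mathcal{C})} \to \Dual_{\mathcal{Z}(\mathcal{C})}^4$. A standard calculation in a braided rigid monoidal category, using \eqref{eq:Dri-iso-tensor} and~\eqref{eq:pivot-inverse}, shows $\gamma_{\mathbf{X}} = u_{\mathbf{X}}^{**} \circ (u_{{}^{*}\mathbf{X}}^{*})^{-1}$, i.e., the right-hand side of~\eqref{eq:w-iso}. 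On the other hand, applying Theorem~\ref{thm:rel-mod} to the forgetful tensor functor $U: \mathcal{Z}(\mathcal{C}) \to \mathcal{C}$ (which is perfect, as both of its adjoints are exact) gives $\relmodb_U \cong U(\modobjb_{\mathcal{C}})$ in $\mathcal{Z}(U, \Dual_{\mathcal{C}}^4 U)$; combined with the known formula for $\relmodb_U$ in terms of $\modobjb_{\mathcal{Z}(\mathcal{C})}$ and an unwinding of the equivalence of Lemma~\ref{lem:Z-F-G-bimod-func}, this yields $\gamma = \Theta(\modobjb_{\mathcal{C}})$, hence $\modobjb_{\mathcal{Z}(\mathcal{C})} = (\unitobj, \Theta(\modobjb_{\mathcal{C}}))$.

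Combining these, the ribbon pivotal condition~\eqref{eq:w-iso} for $j = \Theta(\boldsymbol{\beta})$ becomes
\[
\Theta(\Dual_{\mathcal{C}}^2(\boldsymbol{\beta}) \otimes \boldsymbol{\beta}) = \Theta(\modobjb_{\mathcal{C}}) \quad \text{in } \Nat_{\otimes}(\id_{\mathcal{Z}(\mathcal{C})}, \Dual_{\mathcal{Z}(\mathcal{C})}^4),
\]
and by the bijectivity of $\Theta$ applied with $(F, G) = (\id_{\mathcal{C}}, \Dual_{\mathcal{C}}^4)$ in Theorem~\ref{thm:nat-tensor-bij}, this is equivalent to $\Dual_{\mathcal{C}}^2(\boldsymbol{\beta}) \otimes \boldsymbol{\beta} \cong \modobjb_{\mathcal{C}}$ in $\mathcal{Z}(\id_{\mathcal{C}}, \Dual_{\mathcal{C}}^4)$. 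The main obstacle will be the first step, the multiplicativity of $\Theta$ on 1-cells of $\mathfrak{TF}(\mathcal{C}, \mathcal{C})$: the formula~\eqref{eq:Theta-beta-another-def} must be carefully reconciled with the tensor product of objects of $\mathcal{Z}(?, ?)$, and with the whiskering $\boldsymbol{\beta} \mapsto \Dual_{\mathcal{C}}^2(\boldsymbol{\beta})$, through a diagram chase involving half-braidings, duals, and the Drinfeld isomorphism. Once that naturality is in hand, Step 2 follows from the coend description of $\modobjb_{\mathcal{Z}(\mathcal{C})}$ and Step 3 is immediate from the injectivity of $\Theta$.
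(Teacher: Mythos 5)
Your proposal follows essentially the same route the paper takes: establish compatibility of $\Theta$ with horizontal composition and whiskering (the paper's \eqref{eq:Theta-functorial}), identify $\modobjb_{\mathcal{Z}(\mathcal{C})} = (\unitobj_{\mathcal{Z}(\mathcal{C})}, \Theta(\modobjb_{\mathcal{C}}))$ (the paper's Theorem~\ref{thm:Rad-obj-Dri-cen}), and conclude by bijectivity of $\Theta$. You are right to flag explicitly the whiskering compatibility $\Theta(\Dual_{\mathcal{C}}^2(\boldsymbol{\beta})) = \Theta(\boldsymbol{\beta})^{**}$, which the paper uses but glosses over.

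However, the reasoning you sketch for Step~2 is not sound as written. First, being a braided finite tensor category does \emph{not} imply unimodularity; for a general braided $\mathcal{B}$ the distinguished invertible object is nontrivial, and the unimodularity of $\mathcal{Z}(\mathcal{C})$ is a separate theorem (\cite[Proposition~8.10.10]{MR3242743}). Second, Theorem~\ref{thm:rel-mod} with $F = U$ reads $U(\modobjb_{\mathcal{Z}(\mathcal{C})}) \cong \modobjb_{\mathcal{C}}|_U \otimes \relmodb_U$, not ``$\relmodb_U \cong U(\modobjb_{\mathcal{C}})$''; and there is no ``known formula for $\relmodb_U$ in terms of $\modobjb_{\mathcal{Z}(\mathcal{C})}$'' available---to extract $\modobjb_{\mathcal{Z}(\mathcal{C})}$ from Theorem~\ref{thm:rel-mod} one needs a second, \emph{independent} computation of $\relmodb_U$, which the paper supplies via the braided-bimodule-functor machinery (Lemma~\ref{lem:adj-braided-bimod}, Theorem~\ref{thm:rel-mod-obj-central}, Corollary~\ref{cor:rel-mod-obj-Dri}). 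Without that extra input the argument you describe is circular. You also understate the braided Radford formula $\widetilde{\delta}_{\mathbf{X}} = u_{\mathbf{X}}^{**}\circ(u^*_{{}^*\mathbf{X}})^{-1}$: this is \cite[Theorem~8.10.7]{MR3242743} (reproved as Theorem~\ref{append:thm:Rad-iso-br}), not a quick consequence of \eqref{eq:Dri-iso-tensor} and \eqref{eq:pivot-inverse}.
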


The proof of this theorem is given in Subsection~\ref{subsec:proof-classification}.

This theorem means that every ribbon pivotal structure of $\mathcal{Z}(\mathcal{C})$ is obtained from an invertible object $\boldsymbol{\beta} = (\beta, \tau)$ of $\mathcal{Z}(\id_{\mathcal{C}}^{}, S_{\mathcal{C}}^{2})$ such that there is an isomorphism $f : \beta^{**} \otimes \beta \to \modobj_{\mathcal{C}}$ in $\mathcal{C}$ making the diagram
\begin{equation*}
  \begin{tikzcd}[column sep = 5em, row sep = 2em]
    \beta^{**} \otimes \beta \otimes X
    \arrow[d, "f \otimes \id"']
    \arrow[r, "\id \otimes \tau_X"]
    & \beta^{**} \otimes X^{**} \otimes \beta
    \arrow[r, "\tau_X^{**} \otimes \id"]
    & X^{****} \otimes \beta^{**} \otimes \beta
    \arrow[d, "\id \otimes f"] \\
    \modobj_{\mathcal{C}} \otimes X
    \arrow[rr, "\delta_X"]
    & & X^{****} \otimes \modobj_{\mathcal{C}}
  \end{tikzcd}
\end{equation*}
commutes for all objects $X \in \mathcal{C}$. Given such an object $\boldsymbol{\beta} = (\beta, \tau)$, the corresponding ribbon pivotal structure $j$ of $\mathcal{Z}(\mathcal{C})$ is characterized by
\begin{equation*}
  j_{\mathbf{X}} \otimes \id_{\beta}
  = \Big( X \otimes \beta
    \xrightarrow{\quad \sigma_X(\beta) \quad}
    \beta \otimes X
    \xrightarrow{\quad \tau_X \quad}
    X^{**} \otimes \beta
  \Big)
\end{equation*}
for $\mathbf{X} = (X, \sigma_X) \in \mathcal{Z}(\mathcal{C})$. By Lemma~\ref{lem:ribbon-pivot}, the corresponding twist is obtained by composing $j$ with the inverse of the Drinfeld isomorphism.

\subsection{Relative modular object of central tensor functors}

As a first step of the proof of Theorem~\ref{thm:classification}, we give a formula of the relative modular object of a central tensor functor, that is, a tensor functor from a braided finite tensor category that factors through the Drinfeld center. The following technical lemma will be needed:

\begin{lemma}
  \label{lem:adjoint-and-braiding}
  Let $\mathcal{M}$ and $\mathcal{N}$ be bimodule categories over a monoidal category $\mathcal{B}$, and let $R : \mathcal{M} \to \mathcal{N}$ be a lax $\mathcal{B}$-bimodule functor with structure morphisms
  \begin{equation*}
    \xi^{(\ell)}_{X,M} : X \catactl R(M) \to R(X \catactl M)
    \quad \text{and} \quad
    \xi^{(r)}_{M,X} : R(M) \catactr X \to R(M \catactr X).
  \end{equation*}
  Suppose that $R$ has a left adjoint $L$. By Lemma~\ref{lem:mod-func-adj}, the functor $L$ carries a structure of an oplax $\mathcal{B}$-bimodule functor, which we denote by
  \begin{equation*}
    \zeta^{(\ell)}_{X,N} : L(X \catactl N) \to X \catactl L(N)
    \quad \text{and} \quad
    \zeta^{(r)}_{N,X} : L(N \catactr X) \to L(N) \catactr X.
  \end{equation*}
  Suppose moreover that we are given two natural transformations
  \begin{gather*}
    \tau^{\mathcal{M}}_{X, M} : X \catactl M \to M \catactr X \quad (X \in \mathcal{C}, M \in \mathcal{M}), \\
    \tau^{\mathcal{N}}_{X, N} : X \catactl N \to N \catactr X \quad (X \in \mathcal{C}, N \in \mathcal{N}).
  \end{gather*}
  Then the following two assertions are equivalent:
  \begin{enumerate}
  \item For all $X \in \mathcal{C}$ and $M \in \mathcal{M}$, the following equation holds:
    \begin{equation*}
      R(\tau^{\mathcal{M}}_{X,M}) \circ \xi^{(\ell)}_{X,M} = \xi^{(r)}_{M,X} \circ \tau^{\mathcal{N}}_{X,R(M)}.
    \end{equation*}
  \item For all $X \in \mathcal{C}$ and $N \in \mathcal{N}$, the following equation holds:
    \begin{equation*}
       \tau^{\mathcal{M}}_{X,L(N)} \circ \zeta^{(\ell)}_{X,N} =  \zeta^{(r)}_{N,X} \circ L(\tau^{\mathcal{N}}_{X,N}).
    \end{equation*}
  \end{enumerate}
\end{lemma}
\begin{proof}
  We only prove that (1) implies (2), since the other direction can be proved in a similar manner.
  Suppose that (1) holds.
  Let $\eta$ and $\varepsilon$ be the unit and the counit of the adjunction $L \dashv R$.
  We fix objects $X \in \mathcal{B}$ and $N \in \mathcal{N}$ and consider the following diagram:
  \begin{equation*}
    \begin{tikzcd}[column sep = 1em]
      L(X \catactl N)
      \arrow[r, "L(\id \catactl \eta)"]
      \arrow[d, "L(\tau^{\mathcal{N}})"']
      & [3em] L(X \catactl R L(N))
      \arrow[r, "L(\xi^{(\ell)})"]
      \arrow[d, "L(\tau^{\mathcal{N}})"']
      & [3em] L R (X \catactl L(N))
      \arrow[r, "\varepsilon"]
      \arrow[d, "L R(\tau^{\mathcal{M}})"]
      & X \catactl L(N)
      \arrow[d, "\tau^{\mathcal{M}}"] \\
      L(N \catactr X)
      \arrow[r, "L(\eta \catactr \id)"]
      & L(R L(N) \catactr X)
      \arrow[r, "L(\xi^{(r)})"]
      & L R (L(N) \catactr X))
      \arrow[r, "\varepsilon"]
      & L(N) \catactr X
    \end{tikzcd}    
  \end{equation*}
  The central square is commutative by the assumption. The left and the right squares are also commutative by the naturality of $\tau^{\mathcal{N}}$ and $\varepsilon$, respectively. Thus the above diagram commutes. This implies that (2) holds. The proof is done.
\end{proof}

Let $\mathcal{B}$ be a braided finite tensor category with braiding $\sigma$, and let $\mathcal{C}$ be a (not necessarily braided) finite tensor category. We say that a tensor functor $F: \mathcal{B} \to \mathcal{C}$ is {\em central} \cite[Definition 2.3]{MR3022755} if there is a braided tensor functor $\widetilde{F}: \mathcal{B} \to \mathcal{Z}(\mathcal{C})$, called a {\em lift}, such that $F = U \circ \widetilde{F}$, where $U: \mathcal{Z}(\mathcal{C}) \to \mathcal{C}$ is the forgetful functor. We now assume that $F$ is central and fix a lift $\widetilde{F}$. Then we can write
\begin{equation*}
  \widetilde{F}(X) = \Big( F(X), \ \Sigma_X: F(X) \otimes (-) \to (-) \otimes F(X)\Big) \in \mathcal{Z}(\mathcal{C})
\end{equation*}
for some natural isomorphism $\Sigma_X$.
The assumption that $\widetilde{F} : \mathcal{B} \to \mathcal{Z}(\mathcal{C})$ is a braided tensor functor means that the following equation holds:
\begin{equation}
  \label{eq:lift-braided}
  F_2(V,Y) \circ \Sigma_{X}(F(Y)) = F(\sigma_{X,Y}) \circ F_2(X,Y)
  \quad (X, Y \in \mathcal{B}).
\end{equation}

\begin{theorem}
  \label{thm:rel-mod-obj-central}
  Notations are as above. If the central tensor functor $F: \mathcal{B} \to \mathcal{C}$ is perfect, then the relative modular object of $F$ is given by $\relmodb_F = (\mu, \gamma)$, where
  \begin{equation}
    \label{eq:rel-mod-obj-central-1}
    \mu = \modobj_{\mathcal{C}}^* \otimes F(\modobj_{\mathcal{B}}^{})
  \end{equation}
  and the natural transformation $\gamma$ is given by
  \begin{equation}
    \label{eq:rel-mod-obj-central-2}
    \gamma_X: \mu \otimes F(X) \xrightarrow{\quad \Sigma_{X}(\mu)^{-1} \quad} F(X) \otimes \mu
    \quad (X \in \mathcal{B}).
  \end{equation}
\end{theorem}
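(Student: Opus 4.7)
The plan is to exploit the centrality of $F$ to endow ${}_{\langle F \rangle}\mathcal{C}_{\langle F \rangle}$ with a braided $\mathcal{B}$-bimodule structure, and then to transport braidedness along the double adjunction defining $\relmodb_F$. First, I would set $\sigma^{\mathcal{C}}_{X, V} := \Sigma_X(V)$ for $X \in \mathcal{B}$ and $V \in \mathcal{C}$; unpacking the coherence axiom for a braided $\mathcal{B}$-bimodule category reduces to showing that $\Sigma_{X \otimes Y}(V)$ factors through $\Sigma_X(V)$ and $\Sigma_Y(V)$ as in a half-braiding, which is guaranteed by the monoidality of the lift $\widetilde F: \mathcal{B} \to \mathcal{Z}(\mathcal{C})$. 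I would likewise verify that $F: \mathcal{B} \to {}_{\langle F \rangle}\mathcal{C}_{\langle F \rangle}$ (with $\mathcal{B}$ viewed as a braided bimodule over itself via its own braiding) is a braided $\mathcal{B}$-bimodule functor: the relevant compatibility reduces to $F(\sigma_{X,Y}) \circ F_2(X,Y) = F_2(Y,X) \circ \Sigma_X(F(Y))$, which is exactly the statement that $\widetilde F$ preserves braidings.

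Next, since $F$ is perfect, one may choose a right adjoint $R$ of $F$ and a right adjoint $G$ of $R$, both exact. By Lemma~\ref{lem:mod-func-adj}, $R$ and $G$ carry canonical $\mathcal{B}$-bimodule structures. Applying Lemma~\ref{lem:adj-braided-bimod} first to $F \dashv R$ and then to $R \dashv G$ propagates braidedness from $F$ to $R$ and then to $G$, so that $G: \mathcal{B} \to {}_{\langle F \rangle}\mathcal{C}_{\langle F \rangle}$ is itself a braided $\mathcal{B}$-bimodule functor.

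Finally, I would evaluate the braidedness square of $G$ at $M = \unitobj_{\mathcal{B}}$. Because $\sigma^{\mathcal{B}}_{X, \unitobj_{\mathcal{B}}} = \id$, the square collapses to the identity
\begin{equation*}
  \xi^{(\ell)}_{X, \unitobj_{\mathcal{B}}} \;=\; \xi^{(r)}_{\unitobj_{\mathcal{B}}, X} \circ \Sigma_X(\mu),
\end{equation*}
where $\mu := G(\unitobj_{\mathcal{B}})$ and $\xi^{(\ell)}, \xi^{(r)}$ denote the $\mathcal{B}$-bimodule structure morphisms of $G$. Combining with \eqref{eq:rel-mod-obj-gamma-def} yields $\gamma_X = (\xi^{(\ell)}_{X, \unitobj_{\mathcal{B}}})^{-1} \circ \xi^{(r)}_{\unitobj_{\mathcal{B}}, X} = \Sigma_X(\mu)^{-1}$, which is \eqref{eq:rel-mod-obj-central-2}. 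The identification \eqref{eq:rel-mod-obj-central-1} of the underlying object is then the content of Theorem~\ref{thm:rel-mod} applied to $F: \mathcal{B} \to \mathcal{C}$.

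The main obstacle is the bookkeeping in the first paragraph: one must patiently match each coherence condition in the definitions of braided bimodule category and braided bimodule functor against the corresponding axiom of $\widetilde F$ being a braided monoidal functor, which involves a nontrivial amount of diagram-chasing even though no individual step is deep. Once these verifications are in place, the rest of the argument is essentially a one-line deduction from braidedness of $G$ evaluated at the monoidal unit.
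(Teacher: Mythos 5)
Your proof is correct and follows essentially the same route as the paper: you set up $\mathcal{C}$ as a braided $\mathcal{B}$-bimodule category via $\Sigma$, observe that $F$ is a braided bimodule functor because $\widetilde F$ is braided monoidal, propagate braidedness to the double right adjoint $G$ using Lemma~\ref{lem:adj-braided-bimod}, evaluate the braiding square of $G$ at the unit using $\sigma_{X,\unitobj} = \id$, and defer the underlying-object identification to Theorem~\ref{thm:rel-mod}. Your explicit note that Lemma~\ref{lem:adj-braided-bimod} must be applied twice (to $F \dashv R$, then $R \dashv G$) is a small clarification of a step the paper leaves implicit, but the argument is otherwise identical.
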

\begin{proof}
  By \eqref{eq:lift-braided}, we can apply (2) $\Rightarrow$ (1) of Lemma~\ref{lem:adjoint-and-braiding} to $\mathcal{M} = {}_{\langle F \rangle}\mathcal{C}{}_{\langle F \rangle}$, $\mathcal{N} = \mathcal{B}$, $R = F^{\radj}$, $L = F$, $\tau^{\mathcal{M}} = \Sigma$ and $\tau^{\mathcal{N}} = \sigma$. As a consequence, we have
  \begin{equation*}
    F^{\radj}(\Sigma_{X,V}) \circ \xi^{(\ell)}_{X,V} = \xi^{(r)}_{V,X} \circ \sigma_{X, F^{\radj}(V)}
  \end{equation*}
  for all objects $X \in \mathcal{B}$ and $V \in \mathcal{C}$, where $\xi^{(\ell)}$ and $\xi^{(r)}$ are the structure morphism of $F^{\radj}$ as a lax $\mathcal{B}$-bimodule functor. By inverting its structure morphisms, we regard $F^{\radj}$ as an oplax $\mathcal{B}$-bimodule functor. Then, again by (2) $\Rightarrow$ (1) of Lemma~\ref{lem:adjoint-and-braiding}, we see that the equation
  \begin{equation*}
    \omega^{(r)}_{Y,X} \circ \Sigma_{X}(F^{\rradj}(Y)) = F^{\rradj}(\sigma_{X,Y}) \circ \omega^{(\ell)}_{X,Y}
  \end{equation*}
  holds for all objects $X, Y \in \mathcal{B}$, where $\omega^{(\ell)}$ and $\omega^{(r)}$ are the structure morphisms of the lax $\mathcal{B}$-bimodule functor $F^{\rradj}$. Thus, by the definition $\relmodb_F = (\mu, \gamma)$, we have
  \begin{gather*}
    \gamma_X
    = (\omega_{X, \unitobj}^{(\ell)})^{-1} \circ \omega_{\unitobj, X}^{(r)}
    = (\omega_{X, \unitobj}^{(\ell)})^{-1} \circ F^{\rradj}(\sigma_{X,\unitobj})^{-1} \circ \omega_{\unitobj,X}^{(r)}
    = \Sigma_X(\mu)^{-1}
  \end{gather*}
  for all $X \in \mathcal{C}$. By Theorem~\ref{thm:relative-modulus}, we may assume that $\mu$ is given by~\eqref{eq:rel-mod-obj-central-1}. Then the above formula of $\gamma_X$ coincides with \eqref{eq:rel-mod-obj-central-2}. The proof is done.
\end{proof}

The forgetful functor $U: \mathcal{Z}(\mathcal{C}) \to \mathcal{C}$ is a perfect tensor functor \cite[Corollary 4.9]{MR3632104}. We note that $\mathcal{Z}(\mathcal{C})$ is {\em unimodular}, that is, its distinguished invertible object $\modobj_{\mathcal{Z}(\mathcal{C})}$ is isomorphic to the unit object \cite[Proposition 8.10.10]{MR3242743}. Applying the above theorem to $U$, we obtain:

\begin{corollary}
  \label{cor:rel-mod-obj-Dri}
  The relative modular object of $U$ is given by $\relmodb_U = (\modobj^*, \gamma)$, where $\modobj$ is the distinguished invertible object of $\mathcal{C}$ and $\gamma$ is given by
  \begin{equation*}
    \gamma_{\mathbf{X}} = \Big( \modobj^* \otimes U(\mathbf{X})
    = \modobj^* \otimes X
    \xrightarrow{\quad \sigma_X(\modobj^*)^{-1} \quad} 
    X \otimes \modobj^*
    = U(\mathbf{X}) \otimes \modobj^* \Big)
  \end{equation*}
  for $\mathbf{X} = (X, \sigma_X) \in \mathcal{Z}(\mathcal{C})$.
\end{corollary}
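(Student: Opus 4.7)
The plan is to apply Theorem~\ref{thm:rel-mod-obj-central} directly to the central tensor functor $U: \mathcal{Z}(\mathcal{C}) \to \mathcal{C}$. The identity functor $\widetilde{U} = \id_{\mathcal{Z}(\mathcal{C})}$ is a braided tensor functor lifting $U$ along itself, so $U$ is central. By definition of the Drinfeld center, for $\mathbf{X} = (X, \sigma_X) \in \mathcal{Z}(\mathcal{C})$ we have $\widetilde{U}(\mathbf{X}) = \mathbf{X}$, so the natural isomorphism $\Sigma_{\mathbf{X}}$ appearing in Theorem~\ref{thm:rel-mod-obj-central} is nothing but the half-braiding $\sigma_X$ itself.

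The perfectness hypothesis is the content of \cite[Corollary 4.9]{MR3632104}, and unimodularity of $\mathcal{Z}(\mathcal{C})$ is \cite[Proposition 8.10.10]{MR3242743}; both are already cited in the paragraph preceding the corollary. Unimodularity gives $\modobj_{\mathcal{Z}(\mathcal{C})} \cong \unitobj_{\mathcal{Z}(\mathcal{C})}$, hence $U(\modobj_{\mathcal{Z}(\mathcal{C})}) \cong \unitobj_{\mathcal{C}}$. Substituting into the formula~\eqref{eq:rel-mod-obj-central-1} of Theorem~\ref{thm:rel-mod-obj-central} yields
\begin{equation*}
  \mu = \modobj_{\mathcal{C}}^{*} \otimes U(\modobj_{\mathcal{Z}(\mathcal{C})}) \cong \modobj_{\mathcal{C}}^{*} \otimes \unitobj_{\mathcal{C}} = \modobj^{*},
\end{equation*}
which agrees with the stated underlying object of $\relmodb_U$.

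For the half-braiding, formula~\eqref{eq:rel-mod-obj-central-2} of Theorem~\ref{thm:rel-mod-obj-central} gives $\gamma_{\mathbf{X}} = \Sigma_{\mathbf{X}}(\mu)^{-1}$. Combined with the identification $\Sigma_{\mathbf{X}} = \sigma_X$ noted above and $\mu = \modobj^{*}$, this is exactly $\sigma_X(\modobj^{*})^{-1}$, which matches the stated formula for $\gamma$. No further computation is needed.

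There is essentially no obstacle: the corollary is a direct instantiation of Theorem~\ref{thm:rel-mod-obj-central} once one observes that the forgetful functor $U$ is central via the identity lift. The only mild point worth writing out is the identification of $\Sigma_{\mathbf{X}}$ with $\sigma_X$, which is immediate from the definition of the lift $\widetilde{F}$ recalled just before Theorem~\ref{thm:nat-tensor-bij} applied to $F = \id_{\mathcal{C}}$. Given Theorem~\ref{thm:rel-mod-obj-central}, unimodularity of $\mathcal{Z}(\mathcal{C})$, and perfectness of $U$, the proof is a one-line substitution.
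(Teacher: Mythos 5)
Your proposal is correct and takes the same approach as the paper: the paper likewise introduces the corollary by recalling that $U$ is perfect and $\mathcal{Z}(\mathcal{C})$ is unimodular, and then says simply ``Applying the above theorem to $U$'', leaving exactly the substitutions you spell out (identity lift gives $\Sigma_{\mathbf{X}}=\sigma_X$, unimodularity collapses $\mu$ to $\modobj^*$) to the reader.
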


\subsection{The Radford isomorphism of the Drinfeld center}

Let $\mathcal{C}$ be a finite tensor category, and let $U: \mathcal{Z}(\mathcal{C}) \to \mathcal{C}$ be the forgetful functor.
Theorem~\ref{thm:relative-modulus} and Corollary~\ref{cor:rel-mod-obj-Dri} give two different descriptions of the relative modular object of $U$. Using these results, we can determine the Radford isomorphism of $\mathcal{Z}(\mathcal{C})$ as follows:

\begin{theorem}
  \label{thm:Rad-obj-Dri-cen}
  $\modobjb_{\mathcal{Z}(\mathcal{C})} = (\unitobj_{\mathcal{Z}(\mathcal{C})}, \Theta(\modobjb_{\mathcal{C}}))$.
\end{theorem}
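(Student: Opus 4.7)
The strategy is to combine the two descriptions of the relative modular object of the forgetful functor $U\colon \mathcal{Z}(\mathcal{C}) \to \mathcal{C}$ furnished by Theorem~\ref{thm:rel-mod} and Corollary~\ref{cor:rel-mod-obj-Dri}. Since $\mathcal{Z}(\mathcal{C})$ is unimodular, the underlying invertible object of $\modobjb_{\mathcal{Z}(\mathcal{C})}$ is isomorphic to $\unitobj_{\mathcal{Z}(\mathcal{C})}$, so $\modobjb_{\mathcal{Z}(\mathcal{C})}$ is isomorphic to $(\unitobj_{\mathcal{Z}(\mathcal{C})}, \eta)$ in $\mathcal{Z}(\id_{\mathcal{Z}(\mathcal{C})}, \Dual_{\mathcal{Z}(\mathcal{C})}^{4})$ for a unique monoidal natural isomorphism $\eta\colon \id_{\mathcal{Z}(\mathcal{C})} \to \Dual_{\mathcal{Z}(\mathcal{C})}^{4}$, and the theorem reduces to the assertion $\eta = \Theta(\modobjb_{\mathcal{C}})$. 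Since $U$ is a faithful tensor functor, it suffices to verify the equality $U(\eta_{\mathbf{V}}) = U(\Theta(\modobjb_{\mathcal{C}})_{\mathbf{V}})$ in $\mathcal{C}$ for each $\mathbf{V} = (V, \sigma_V) \in \mathcal{Z}(\mathcal{C})$.

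Applying Theorem~\ref{thm:rel-mod} to $F = U$ gives an isomorphism $\modobjb_{\mathcal{C}}|_{U} \otimes \relmodb_{U} \cong U(\modobjb_{\mathcal{Z}(\mathcal{C})})$ in $\mathcal{Z}(U, \Dual_{\mathcal{C}}^{4} U)$. By Corollary~\ref{cor:rel-mod-obj-Dri}, $\relmodb_U = (\modobj^*, \gamma)$ with $\gamma_{\mathbf{V}} = \sigma_V(\modobj^*)^{-1}$, so the left-hand side has underlying object $\modobj \otimes \modobj^*$ equipped with the natural transformation
\begin{equation*}
  \rho(\mathbf{V}) = (\delta_V \otimes \id_{\modobj^*}) \circ (\id_{\modobj} \otimes \sigma_V(\modobj^*)^{-1}),
\end{equation*}
while the right-hand side has underlying object $\unitobj_{\mathcal{C}}$ and natural transformation $U\eta$. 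Any underlying isomorphism $\modobj \otimes \modobj^* \cong \unitobj_{\mathcal{C}}$ differs from $\coev_{\modobj}^{-1}$ by a nonzero scalar that cancels in the conjugation formula expressing $U\eta$ in terms of $\rho$; hence we may use $\coev_{\modobj}^{-1}$ to obtain
\begin{equation*}
  U(\eta_{\mathbf{V}}) = (\id_{\Dual_{\mathcal{C}}^{4}(V)} \otimes \coev_{\modobj}^{-1}) \circ \rho(\mathbf{V}) \circ (\coev_{\modobj} \otimes \id_V).
\end{equation*}
On the other hand, unwinding~\eqref{eq:Theta-beta-another-def} for $\boldsymbol{\beta} = \modobjb_{\mathcal{C}}$ gives the analogous formula for $U(\Theta(\modobjb_{\mathcal{C}})_{\mathbf{V}})$ in which the middle piece of $\rho(\mathbf{V})$ is replaced by $(\sigma_V(\modobj) \otimes \id_{\modobj^*}) \circ (\id_V \otimes \coev_{\modobj})$.

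It therefore remains to verify the zig-zag identity
\begin{equation*}
  (\id_{\modobj} \otimes \sigma_V(\modobj^*)^{-1}) \circ (\coev_{\modobj} \otimes \id_V)
  = (\sigma_V(\modobj) \otimes \id_{\modobj^*}) \circ (\id_V \otimes \coev_{\modobj}),
\end{equation*}
which I would derive by expanding $\sigma_V(\modobj \otimes \modobj^*) = (\id_{\modobj} \otimes \sigma_V(\modobj^*)) \circ (\sigma_V(\modobj) \otimes \id_{\modobj^*})$ via the hexagon axiom for half-braidings, and combining this with the naturality of $\sigma_V(-)$ on the morphism $\coev_{\modobj}\colon \unitobj \to \modobj \otimes \modobj^*$ together with the monoidal identity $\sigma_V(\unitobj) = \id_V$. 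I expect the only real obstacle to be organisational: ensuring the two sides of Theorem~\ref{thm:rel-mod} are unpacked with the correct conventions for $(-)|_U$ and $U(-)$ from Subsection~\ref{subsec:Z-notations}, and keeping the pre- and post-composition of the coevaluation straight. No deeper difficulty arises once $\rho(\mathbf{V})$ and $U(\Theta(\modobjb_{\mathcal{C}})_{\mathbf{V}})$ are placed side by side.
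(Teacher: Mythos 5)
Your proposal is correct and follows essentially the same route as the paper's proof: both reduce the statement to a comparison in $\mathcal{C}$ using Theorem~\ref{thm:rel-mod} for $F=U$ together with Corollary~\ref{cor:rel-mod-obj-Dri}, and then close by an identity relating $\sigma_V(\modobj^*)^{-1}$ to $\sigma_V(\modobj)$ via the (co)evaluations. The only cosmetic difference is that you formulate the final step as a zig-zag identity derived from the hexagon, naturality on $\coev_{\modobj}$, and $\sigma_V(\unitobj)=\id_V$, whereas the paper invokes the standard expansion of $\sigma_X(V^*)^{-1}$ in terms of $\sigma_X(V)$, $\eval$ and $\coev$; these are the same computation.
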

\begin{proof}
  Since $\mathcal{Z}(\mathcal{C})$ is unimodular, there is a natural isomorphism
  $\Delta_{\mathbf{X}}: \mathbf{X} \to \mathbf{X}^{****}$ ($\mathbf{X} \in \mathcal{Z}(\mathcal{C})$)
  such that $\modobjb_{\mathcal{Z}(\mathcal{C})} = (\unitobj_{\mathcal{Z}(\mathcal{C})}, \Delta)$. By~\eqref{eq:Theta-beta-another-def}, the claim of this theorem is equivalent to that the natural isomorphism $\Delta$ satisfies
  \begin{equation}
    \label{eq:Radford-iso-Dri}
    \Delta_{\mathbf{X}} \otimes \id_{\modobj}
    = \Big( X \otimes \modobj
    \xarq{\sigma_{X}(\modobj)} \modobj \otimes X
    \xarq{\delta_X} X^{****} \otimes \modobj \Big)
  \end{equation}
  for $\mathbf{X} = (X, \sigma_X) \in \mathcal{Z}(\mathcal{C})$, where $\modobjb_{\mathcal{C}} = (\modobj, \delta)$.
  By Theorem~\ref{thm:relative-modulus} and Corollary~\ref{cor:rel-mod-obj-Dri}, there is an isomorphism $p : \unitobj \to D \otimes D^*$ in $\mathcal{C}$ such that the diagram
  \begin{equation*}
    \begin{tikzcd}
      \unitobj \otimes X
      \arrow[rr, "\Delta_X"]
      \arrow[d, "p \otimes \id"']
      & [5em] & [2em]
      X^{****} \otimes \unitobj
      \arrow[d, "\id \otimes p"] \\
      \modobj \otimes \modobj^* \otimes X
      \arrow[r, "\id \otimes \sigma_{X}(D^*)^{-1}"]
      & \modobj \otimes X \otimes \modobj^*
      \arrow[r, "\delta_X \otimes \id"]
      & X^{****} \otimes \modobj \otimes \modobj^*
    \end{tikzcd}
  \end{equation*}
  commutes for $\mathbf{X} = (X, \sigma_X) \in \mathcal{Z}(\mathcal{C})$. Since $\Hom_{\mathcal{C}}(\unitobj, \modobj \otimes \modobj^*)$ is a one-dimensional space spanned by $\coev_{\modobj}$, we may assume that $p = \coev_{\modobj}$. Now \eqref{eq:Radford-iso-Dri} follows from the above commutative diagram and the general formula
  \begin{equation*}
    \sigma_{X}(V^*)^{-1} =
    \begin{gathered}[t]
      (\eval_{V} \otimes \id_{X} \otimes \id_{V^*}) \circ (\id_{V^*} \otimes \sigma_X(V) \otimes \id_{V^*}) \\
      \circ (\id_{V^*} \otimes \id_{X} \otimes \coev_V)
    \end{gathered}
  \end{equation*}
  for $\mathbf{X} = (X, \sigma_X) \in \mathcal{Z}(\mathcal{C})$ and $V \in \mathcal{C}$. The proof is done.
\end{proof}

\subsection{Proof of Theorem~\ref{thm:classification}}
\label{subsec:proof-classification}

Let $\mathcal{B}$ be a braided finite tensor category, and let $(\modobj, \delta)$ be the Radford object $\mathcal{B}$. For $X \in \mathcal{B}$, we define $w_X: X \to X^{****}$ by the right-hand side of~\eqref{eq:w-iso}. Then the equation
\begin{equation*}
  \renewcommand{\xarrlen}{4em}
  \delta_X = \Big(X \otimes \modobj
  \xarr{\sigma_{X,\modobj}} \modobj \otimes X
  \xarr{\id \otimes w_X} \modobj \otimes X^{****} \Big)
\end{equation*}
holds for all $X \in \mathcal{B}$ \cite[Theorem 8.10.7]{MR3242743}. Since the object $\modobj$ is invertible, the above formula of the Radford isomorphism implies:

\begin{lemma}
  \label{lem:ribbon-pivot-finite}
  A pivotal structure $j$ of $\mathcal{B}$ is ribbon if and only if the equation
  \begin{equation*}
    \delta_X = (\id_{\modobj} \otimes j_{X}^{**}j_X^{\phantom{*}}) \circ \sigma_{X, \modobj}^{\phantom{*}}
  \end{equation*}
  holds for all objects $X \in \mathcal{B}$. 
\end{lemma}

\begin{proof}[Proof of Theorem~\ref{thm:classification}]
  Suppose that we have three tensor autoequivalences $F$, $G$ and $H$ of $\mathcal{C}$.
  It is straightforward to show that the equation
  \begin{equation}
    \label{eq:Theta-functorial}
    \Theta(\boldsymbol{\beta}_1 \otimes \boldsymbol{\beta}_2)_{\mathbf{X}}^{}
    = \Theta(\boldsymbol{\beta}_1)_{\mathbf{X}}^{} \circ \Theta(\boldsymbol{\beta}_2)_{\mathbf{X}}^{}
  \end{equation}
  holds for all invertible objects $\boldsymbol{\beta}_1 \in \mathcal{Z}(G, H)$ and $\boldsymbol{\beta}_2 \in \mathcal{Z}(F, G)$.

  Now let $\boldsymbol{\beta} \in \mathcal{Z}(\id_{\mathcal{C}}^{}, S_{\mathcal{C}}^2)$ be an invertible object.
  We apply the above lemma to $\mathcal{B} = \mathcal{Z}(\mathcal{C})$ and $j = \Theta(\boldsymbol{\beta})$. 
  By Theorem~\ref{thm:Rad-obj-Dri-cen} and the fact that $\mathcal{Z}(\mathcal{C})$ is unimodular, the pivotal structure $\Theta(\boldsymbol{\beta})$ of $\mathcal{Z}(\mathcal{C})$ is ribbon if and only if the equation
  \begin{equation*}
    \Theta(\boldsymbol{\beta})_{\mathbf{X}}^{**} \circ \Theta(\boldsymbol{\beta})_{\mathbf{X}}^{} = \Theta(\modobjb_{\mathcal{C}})_{\mathbf{X}}^{}
  \end{equation*}
  holds for all $\mathbf{X} \in \mathcal{Z}(\mathcal{C})$. By \eqref{eq:Theta-functorial}, this is equivalent to the equation
  \begin{equation*}
    \Theta(S_{\mathcal{C}}^2(\boldsymbol{\beta}) \otimes \boldsymbol{\beta}) = \Theta(\modobjb_{\mathcal{C}})
  \end{equation*}
  in $\Nat(\id_{\mathcal{Z}(\mathcal{C})}, S_{\mathcal{Z}(\mathcal{C})}^4)$. Now the claim follows from the bijectivity of $\Theta$.
\end{proof}

\subsection{Applications to non-semisimple modular tensor categories}

We give applications of Theorem~\ref{thm:classification} to not necessarily semisimple modular tensor categories in the sense of Lyubashenko \cite{MR1862634}. We briefly recall the definition: If $\mathcal{B}$ is a braided finite tensor category, then the coend $\mathcal{L} = \int^{X \in \mathcal{B}} X \otimes X^*$ has a natural structure of a Hopf algebra in $\mathcal{B}$. The Hopf algebra $\mathcal{L}$ has a canonical Hopf paring $\omega: \mathcal{L} \otimes \mathcal{L} \to \unitobj$. We say that $\mathcal{B}$ is {\em non-degenerate} if the pairing $\omega$ is. A {\em modular tensor category} is a non-degenerate braided finite tensor category equipped with a ribbon structure.

Now let $\mathcal{C}$ be a finite tensor category. We note that the Drinfeld center $\mathcal{Z}(\mathcal{C})$ is always non-degenerate by \cite{MR3996323} and \cite[Proposition 8.6.3]{MR3242743}, but it does not have a ribbon structure in general. Our theorem implies:

\begin{theorem}
  $\mathcal{Z}(\mathcal{C})$ is a modular tensor category if and only if
  \begin{equation*}
    \Big\{ [\boldsymbol{\beta}] \in \Inv(\mathcal{Z}(\id_{\mathcal{C}}^{}, S_{\mathcal{C}}^2))
    \,\Big|\, S_{\mathcal{C}}^2(\boldsymbol{\beta}) \otimes \boldsymbol{\beta} \cong \modobjb_{\mathcal{C}} \Big\} \ne \emptyset.
  \end{equation*}
\end{theorem}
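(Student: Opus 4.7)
My plan is to observe that this theorem is essentially an immediate consequence of the preceding classification result (Theorem~\ref{thm:classification}) once one accounts for what the word ``modular'' unpacks to. Recall from the introduction that a modular tensor category in the sense of Lyubashenko is defined as a \emph{non-degenerate} braided finite tensor category equipped with a \emph{ribbon structure}. So I would separate the proof into two pieces: (i) non-degeneracy is automatic for $\mathcal{Z}(\mathcal{C})$, and (ii) the existence of a ribbon structure is equivalent to the stated set being non-empty.

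For (i), I would simply cite the two already-quoted facts: by \cite{2016arXiv160206534S} the Drinfeld center of any finite tensor category is factorizable in Lyubashenko's sense, and by \cite[Proposition 8.6.3]{MR3242743} factorizability is equivalent to non-degeneracy of the canonical Hopf pairing $\omega$ on the coend. Hence $\mathcal{Z}(\mathcal{C})$ is non-degenerate with no hypothesis on $\mathcal{C}$, and ``$\mathcal{Z}(\mathcal{C})$ is modular'' reduces to ``$\mathcal{Z}(\mathcal{C})$ admits a ribbon structure''.

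For (ii), the chain of reductions is: ribbon structures on $\mathcal{Z}(\mathcal{C})$ correspond bijectively to ribbon pivotal structures on $\mathcal{Z}(\mathcal{C})$ via $\theta \mapsto u\theta$, by Lemma~\ref{lem:ribbon-pivot} applied to $\mathcal{B} = \mathcal{Z}(\mathcal{C})$ (with $u$ its Drinfeld isomorphism). Then Theorem~\ref{thm:classification} provides a bijection
\begin{equation*}
  \Theta\colon \Big\{ [\boldsymbol{\beta}] \in \Inv(\mathcal{Z}(\id_{\mathcal{C}}, \Dual_{\mathcal{C}}^2)) \,\Big|\, \Dual_{\mathcal{C}}^2(\boldsymbol{\beta}) \otimes \boldsymbol{\beta} \cong \modobjb_{\mathcal{C}} \Big\} \xrightarrow{\ \cong\ } \{\text{ribbon pivotal structures on } \mathcal{Z}(\mathcal{C})\}.
\end{equation*}
Consequently, $\mathcal{Z}(\mathcal{C})$ admits a ribbon structure if and only if the right-hand side is non-empty, if and only if the left-hand side is non-empty. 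Combining with (i), this yields the stated equivalence.

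There is no real obstacle here: the proof is a two-line assembly of results already established in the paper. The only thing to be careful about is making sure the chain of equivalences runs in both directions, which it does since both Lemma~\ref{lem:ribbon-pivot} and Theorem~\ref{thm:classification} are bijections, not just constructions in one direction. I would therefore write the argument as a short paragraph of the form ``By definition $\mathcal{Z}(\mathcal{C})$ is modular iff it is non-degenerate and admits a ribbon structure; the former always holds, and the latter, by Lemma~\ref{lem:ribbon-pivot} and Theorem~\ref{thm:classification}, is equivalent to the non-emptiness of the displayed set.''
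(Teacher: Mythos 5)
Your proposal is correct and takes essentially the same route as the paper: the paper deduces this theorem directly from Theorem~\ref{thm:classification} after remarking that $\mathcal{Z}(\mathcal{C})$ is always non-degenerate (citing \cite{2016arXiv160206534S} and \cite[Proposition~8.6.3]{MR3242743}) and that, by Lemma~\ref{lem:ribbon-pivot}, admitting a ribbon structure is equivalent to admitting a ribbon pivotal structure.
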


A spherical pivotal structure \cite{MR1686423} is a pivotal structure such that the associated left trace and the right trace coincide. Although spherical fusion categories are an important class of tensor categories, such a kind of trace condition is often meaningless in the non-semisimple setting. From the viewpoint of topological quantum field theory, Douglas, Schommer-Pries and Snyder \cite[Definition 4.5.2]{2013arXiv1312.7188D} introduced an alternative notion of the {\em sphericity} of finite tensor categories. In our notation, a {\em spherical finite tensor category} in the sense of \cite{2013arXiv1312.7188D} is just a finite tensor category $\mathcal{C}$ equipped with a pivotal structure $j$ such that
\begin{equation*}
  S_{\mathcal{C}}^2(\unitobj, j) \otimes (\unitobj, j) \cong \modobjb_{\mathcal{C}}
\end{equation*}
in $\mathcal{Z}(\id_{\mathcal{C}}, S_{\mathcal{C}}^4)$. By the above theorem, we have:

\begin{theorem}
  If $\mathcal{C}$ is a spherical finite tensor category in the sense of \cite{2013arXiv1312.7188D}, then $\mathcal{Z}(\mathcal{C})$ is a modular tensor category.
\end{theorem}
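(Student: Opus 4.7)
The plan is to apply the previous theorem directly, exhibiting a canonical invertible object of $\mathcal{Z}(\id_{\mathcal{C}}, \Dual_{\mathcal{C}}^2)$ built from the DSS-spherical pivotal structure and verifying that it satisfies the square-root condition.

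First I would unpack the definition of an object of $\mathcal{Z}(\id_{\mathcal{C}}, \Dual_{\mathcal{C}}^2)$ whose underlying object is the unit $\unitobj \in \mathcal{C}$. Such an object is precisely a natural transformation $\sigma(X) : \unitobj \otimes X \to X^{**} \otimes \unitobj$, i.e.\ a natural transformation $\id_{\mathcal{C}} \to \Dual_{\mathcal{C}}^2$, satisfying the two compatibility equations in the definition of $\mathcal{Z}(F,G)$. Those equations say exactly that $\sigma$ is a monoidal natural transformation, and hence a pivotal structure. Consequently, given the pivotal structure $j$ coming from the DSS-spherical hypothesis, the pair $\boldsymbol{\beta}_j := (\unitobj, j)$ defines an object of $\mathcal{Z}(\id_{\mathcal{C}}, \Dual_{\mathcal{C}}^2)$. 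Since its underlying object $\unitobj \in \mathcal{C}$ is invertible, $\boldsymbol{\beta}_j$ is an invertible 1-cell of $\mathfrak{TF}(\mathcal{C},\mathcal{C})$ by the criterion recorded at the end of Subsection~3.2, so $[\boldsymbol{\beta}_j] \in \Inv(\mathcal{Z}(\id_{\mathcal{C}}, \Dual_{\mathcal{C}}^2))$.

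Next, the hypothesis that $\mathcal{C}$ is DSS-spherical reads
\begin{equation*}
  \Dual_{\mathcal{C}}^2(\unitobj, j) \otimes (\unitobj, j) \cong \modobjb_{\mathcal{C}}
  \quad \text{in } \mathcal{Z}(\id_{\mathcal{C}}, \Dual_{\mathcal{C}}^4),
\end{equation*}
which is precisely the condition $\Dual_{\mathcal{C}}^2(\boldsymbol{\beta}_j) \otimes \boldsymbol{\beta}_j \cong \modobjb_{\mathcal{C}}$ defining membership in the set appearing in the preceding theorem. Therefore that set is nonempty, so by that theorem $\mathcal{Z}(\mathcal{C})$ admits a ribbon pivotal structure (namely $\Theta(\boldsymbol{\beta}_j)$, via Lemma~\ref{lem:ribbon-pivot}), and hence a ribbon structure on $\mathcal{Z}(\mathcal{C})$ in the ordinary sense.

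Finally, to conclude modularity one combines this ribbon structure with non-degeneracy. The latter is recalled in the introduction: $\mathcal{Z}(\mathcal{C})$ is non-degenerate by \cite{2016arXiv160206534S} together with \cite[Proposition 8.6.3]{MR3242743}. Thus $\mathcal{Z}(\mathcal{C})$ is a non-degenerate ribbon finite tensor category, i.e.\ a modular tensor category in Lyubashenko's sense. The main step is just the identification of DSS-sphericity with the square-root condition for the invertible object $(\unitobj, j)$; there is no substantive obstacle, since all the work has been done in the classification theorem and in Section~\ref{subsec:Z-notations}'s set-up of $\mathcal{Z}(\id_{\mathcal{C}}, \Dual_{\mathcal{C}}^2)$.
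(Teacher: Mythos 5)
Your proof is correct and follows the same route as the paper: the paper derives this theorem immediately from the preceding biconditional (modularity of $\mathcal{Z}(\mathcal{C})$ iff the square-root set is nonempty) together with the reformulation of DSS-sphericity as the condition $\Dual_{\mathcal{C}}^2(\unitobj,j)\otimes(\unitobj,j)\cong\modobjb_{\mathcal{C}}$. You simply unpack the details (why $(\unitobj,j)$ is an object of $\mathcal{Z}(\id_{\mathcal{C}},\Dual_{\mathcal{C}}^2)$, why it is invertible, and where non-degeneracy comes from) that the paper leaves implicit.
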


This answers Problem (7) of \cite[Section 6]{MR2681261}.

\appendix
\section{Remarks on the Radford isomorphism}
\label{sec:def-rad-obj}

\newcommand{\deltaENO}{\delta^{\mathrm{ENO}}}

\subsection{Original definition}

Let $\mathcal{C}$ be a finite tensor category.
For simplicity, we write $X^{***} = X^{*3}$, $X^{****} = X^{*4}$, etc., for $X \in \mathcal{C}$. In Section~\ref{sec:relative-modulus}, we have introduced an invertible object $\modobj = \modobj_{\mathcal{C}}$ of $\mathcal{C}$ and a natural isomorphism
\begin{equation*}
  \delta_X: \modobj \otimes X \to X^{*4} \otimes \modobj \quad (X \in \mathcal{C}),
\end{equation*}
which we call the {\em Radford isomorphism}. The first aim of this appendix is to show that $\delta_X$ is identical to the isomorphism introduced in \cite{MR2097289}.

We first recall the definition given in \cite{MR2097289}. We make $\mathcal{C}$ as a finite left $\mathcal{C}^{\env}$-module category by $(X \boxtimes Y) \catactl V = X \otimes V \otimes Y$. Let $\iHom$ be the internal Hom functor of the $\mathcal{C}^{\env}$-module category $\mathcal{C}$. For the algebra $A := \iHom(\unitobj, \unitobj)$ in $\mathcal{C}^{\env}$, there is an equivalence
\begin{equation*}
  \Kappa: \mathcal{C} \to (\mathcal{C}^{\env})_A,
  \quad V \mapsto \iHom(\unitobj, V)
  \mathop{\cong}^{\eqref{eq:int-Hom-iso-1}} (V \boxtimes \unitobj) \otimes A
\end{equation*}
of left $\mathcal{C}^{\env}$-module categories.
The {\em distinguished invertible object} of $\mathcal{C}$ \cite[Definition 3.1]{MR2097289} is the object corresponding to the right $A$-module $A^*$ via the above equivalence. Let $\modobj$ be the distinguished invertible object. By definition, there is an isomorphism
$\phi: (\modobj \boxtimes \unitobj) \otimes A \to A^*$
of right $A$-modules in $\mathcal{C}^{\env}$. This induces an isomorphism
\begin{equation*}
  \psi := \Big( A^{**} \xarq{\phi^*}
  A^* \otimes (\modobj^* \boxtimes \unitobj)
  \xarq{\phi^{-1} \otimes \id}
  (\modobj \boxtimes \unitobj) \otimes A \otimes (\modobj \boxtimes \unitobj)^* \Big)
\end{equation*}
of algebras in $\mathcal{C}^{\env}$. There is a natural isomorphism
\begin{align*}
  \tau_X := \Big( (\unitobj \boxtimes X) \otimes A
  \xrightarrow{\ \eqref{eq:int-Hom-iso-1} \ }
  & \iHom(\unitobj, (\unitobj \boxtimes X) \catactl \unitobj) \\[-7pt]
  =
  & \iHom(\unitobj, (X \boxtimes \unitobj) \catactl \unitobj)
  \xrightarrow{\ \eqref{eq:int-Hom-iso-1} \ }
  (X \boxtimes \unitobj) \otimes A \Big)
\end{align*}
obtained from the structure of $\iHom(\unitobj, -)$ as a $\mathcal{C}^{\env}$-module functor. We now define the natural isomorphism
\begin{equation*}
  \tilde{\tau}_X:
  (\unitobj \boxtimes X) \otimes (\modobj \boxtimes \unitobj) \otimes A
  \to (X^{*4} \boxtimes \unitobj) \otimes (\modobj \boxtimes \unitobj) \otimes A
  \quad (X \in \mathcal{C})
\end{equation*}
to be the unique morphism such that the diagram
\begin{equation*}
  \begin{tikzcd}
    (\unitobj \boxtimes X) \otimes A^{**}
    \arrow[r, "\id \otimes \varphi"]
    \arrow[d, "(\tau_{X^{**}})^{**}"]
    & (\unitobj \boxtimes X) \otimes (\modobj \boxtimes \unitobj) \otimes A \otimes (\modobj^* \boxtimes \unitobj)
    \arrow[d, "\tilde{\tau}_X \otimes \id"]
    \\
    (X^{*4} \boxtimes \unitobj) \otimes A^{**}
    \arrow[r, "\id \otimes \varphi"]
    & 
    (X^{**} \boxtimes \unitobj) \otimes (\modobj \boxtimes \unitobj) \otimes A \otimes (\modobj^* \boxtimes \unitobj)
  \end{tikzcd}
\end{equation*}
commutes.

\begin{definition}
  We define $\deltaENO_X: \modobj \otimes X \to X^{*4} \otimes \modobj$ ($X \in \mathcal{C}$) to be the natural isomorphism characterized by the following formula:
  \begin{equation*}
    \renewcommand{\xarrlen}{2.5em}
    \begin{aligned}
      \Kappa(\deltaENO_X) = \Big(
      \Kappa(\modobj \otimes X) \xarr{\eqref{eq:int-Hom-iso-1}}
      & \, (\modobj \boxtimes \unitobj) \otimes (X \boxtimes \unitobj) \otimes A \\
      \xarr{\id \otimes \tau_X^{-1}}
      & \, (\modobj \boxtimes \unitobj) \otimes (\unitobj \boxtimes X) \otimes A \\
      = & \, (\unitobj \boxtimes X) \otimes (\modobj \boxtimes \unitobj) \otimes A \\
      \xarr{\tilde{\tau}_X}
      & \, (X^{*4} \boxtimes \unitobj) \otimes (\modobj \boxtimes \unitobj) \otimes A
      \xarr{\eqref{eq:int-Hom-iso-1}} \Kappa(X^{*4} \otimes \modobj) \Big).
    \end{aligned}
  \end{equation*}
\end{definition}

Noting that the isomorphisms $\tau_X$ and $\tilde{\tau}_X$ are the inverses of the natural isomorphisms written as $\rho_X$ and $\tilde{\rho}_X$ in \cite{MR2097289}, respectively, one can check that the isomorphism $\modobj \otimes {}^{**}\!X \otimes \modobj^* \cong X^{**}$ of tensor functors given in \cite[Theorem 3.3]{MR2097289} corresponds to $\deltaENO_{{}^{**}\!X}$ via the canonical isomorphism
\begin{equation*}
  \Hom_{\mathcal{C}}(\modobj \otimes {}^{**}\!X, X^{**} \otimes \modobj)
  \cong \Hom_{\mathcal{C}}(\modobj \otimes {}^{**}\!X \otimes \modobj^*, X^{**}).
\end{equation*}

\subsection{The algebra $A$ as a coend}
\label{append:subsec:canonical-algebra}

Instead of the algebra $A = \iHom(\unitobj, \unitobj)$, the canonical algebra $A' := \int^{X \in \mathcal{C}} X \boxtimes {}^* \! X$ is used in this paper.
According to \cite{MR3632104}, we can identify the algebra $A$ with $A'$.
We explain how to do so:
Let $\din_X: X \boxtimes {}^* \! X \to A'$ be the universal dinatural transformation of the coend $A'$. We define the morphism $\epsilon: A' \catactl \unitobj \to \unitobj$ in $\mathcal{C}$ to be the unique morphism in $\mathcal{C}$ such that the equation
\begin{equation*}
  \epsilon \circ (\din_X \catactl \id_{\unitobj}) = \eval_{{}^*\!X}
\end{equation*}
holds for all objects $X \in \mathcal{C}$. For $M \in \mathcal{C}^{\env}$ and $V \in \mathcal{C}$, we have the map
\begin{equation}
  \label{eq:coend-adj-1}
  \Hom_{\mathcal{C}^{\env}}(M, (V \boxtimes \unitobj) \otimes A')
  \to \Hom_{\mathcal{C}}(M \catactl \unitobj, V)
\end{equation}
sending a morphism $f: M \to (V \boxtimes \unitobj) \otimes A'$ in $\mathcal{C}^{\env}$ to
\begin{equation}
  \label{eq:coend-adj-2}
  \renewcommand{\xarrlen}{4em}
  M \catactl \unitobj \xarr{f \catactl \id_{\unitobj}}
  ((V \boxtimes \unitobj) \otimes A') \catactl \unitobj
  = V \otimes (A' \catactl \unitobj)
  \xarr{\id_V \otimes \epsilon} V.
\end{equation}
By the discussion of \cite[Subsection 4.3]{MR3632104}, the map~\eqref{eq:coend-adj-1} is bijective. Thus, by the definition of the internal Hom functor, we may identify
\begin{equation}
  \label{eq:int-Hom-coend}
  \iHom(\unitobj, V) = (V \boxtimes \unitobj) \otimes A'
\end{equation}
for all $V \in \mathcal{C}$. It is moreover shown in \cite{MR3632104} that $A$ is identified with $A'$ as an algebra if we endow the coend $A'$ with an algebra structure as in Subsection~\ref{subsec:center-as-modules} of this paper.

In what follows, we identify $\iHom(\unitobj, V)$ with $(V \boxtimes \unitobj) \otimes A'$ as above.
Since the isomorphism $\deltaENO$ is defined in terms of the internal Hom functor, it is convenient if morphisms related to $\iHom$ are written in terms of the universal dinatural transformation $\din_X$. We first express the unit and the counit of the adjunction \eqref{eq:coend-adj-1} as follows:

\begin{lemma}
  The unit of the adjunction \eqref{eq:coend-adj-1}, which we denote by
  \begin{equation*}
    \eta_M: M \to \iHom(\unitobj, M \catactl \unitobj)
    \quad (M \in \mathcal{C}^{\env}),    
  \end{equation*}
  is a unique natural transformation such that
  \begin{equation*}
    \renewcommand{\xarrlen}{5em}
    \begin{aligned}
      \eta_{X \boxtimes Y}
      = \Big( X \boxtimes Y
      \xarr{(\id \otimes \coev) \boxtimes \id} & \,
      (X \otimes Y \otimes Y^*) \boxtimes Y
      = ((X \otimes Y) \boxtimes \unitobj) \otimes (Y^* \boxtimes Y) \\
      \xarr{\id \otimes i_{Y^*}} & \, ((X \otimes Y) \boxtimes \unitobj) \otimes A
      = \iHom(\unitobj, (X \boxtimes Y) \catactl \unitobj) \Big)
    \end{aligned}
  \end{equation*}
  for all objects $X, Y \in \mathcal{C}$. The counit of \eqref{eq:coend-adj-1}, which we denote by
  \begin{equation*}
    \varepsilon_V: \iHom(\unitobj, V) \catactl \unitobj \to V \quad (V \in \mathcal{C}),
  \end{equation*}
  is given by
  \begin{equation*}
    \renewcommand{\xarrlen}{5em}
    \varepsilon_V = \Big(
    \iHom(\unitobj, V) \catactl \unitobj
    = V \otimes (A \catactl \unitobj)
    \xarr{\id_V \otimes \epsilon}
    V \otimes \unitobj = V \Big).
  \end{equation*}
\end{lemma}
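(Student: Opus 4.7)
The plan is to pin down both the unit and the counit by using the universal property of the adjunction \eqref{eq:coend-adj-1} directly: the counit $\varepsilon_V$ is the image of $\id_{\iHom(\unitobj,V)}$ under this bijection (with $M = \iHom(\unitobj,V)$), while the unit $\eta_M$ is uniquely characterized by the condition that its image under \eqref{eq:coend-adj-1} (with $V = M \catactl \unitobj$) is $\id_{M \catactl \unitobj}$. Hence it suffices to verify that the displayed formulas satisfy these tautological properties.

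For the counit, I would substitute $M = (V \boxtimes \unitobj) \otimes A'$ and $f = \id_M$ in the definition \eqref{eq:coend-adj-2} of the bijection. The map $f \catactl \id_{\unitobj}$ is the identity on $V \otimes (A' \catactl \unitobj)$, so the prescription reduces to the second arrow in \eqref{eq:coend-adj-2}, namely $\id_V \otimes \varepsilon^0$. This matches the claimed formula.

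For the unit, I would first evaluate the proposed $\eta_{X \boxtimes Y}$ under $(-) \catactl \unitobj$. By the definition of the action \eqref{eq:C-env-action}, the first leg becomes $\id_X \otimes \coev_Y \otimes \id_Y \colon X \otimes Y \to X \otimes Y \otimes Y^* \otimes Y$, and the second leg becomes $\id_{X \otimes Y} \otimes (i_{Y^*} \catactl \id_{\unitobj})$. Post-composing with $\id_{X \otimes Y} \otimes \varepsilon^0$ and applying the defining property of $\varepsilon^0$ to the object $Y^* \in \mathcal{C}$ (noting ${}^*(Y^*) = Y$ and $\eval'_{Y^*} = \eval_Y$), the last two arrows collapse to $\id_{X \otimes Y} \otimes \eval_Y \colon X \otimes Y \otimes Y^* \otimes Y \to X \otimes Y$. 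The snake identity $(\id_Y \otimes \eval_Y) \circ (\coev_Y \otimes \id_Y) = \id_Y$ then identifies the whole composition with $\id_{X \otimes Y}$, confirming that the proposed $\eta_{X \boxtimes Y}$ is indeed sent to $\id_{X \otimes Y}$ under \eqref{eq:coend-adj-1}.

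Uniqueness and naturality are then automatic: on pure tensors $X \boxtimes Y$ the formula is visibly natural in both variables (the morphism $\coev_Y$ is natural in $Y$, and $i_{Y^*}$ is dinatural in $Y$, so the combination is natural in $Y$), and because $\mathcal{C}^{\env} = \mathcal{C} \boxtimes \mathcal{C}^{\rev}$ is universal among $k$-bilinear right exact targets, any natural transformation is determined by its values on pure tensors. The only real obstacle is bookkeeping with the two duality functors: one must recognize that the coend is written with right duals $X \boxtimes {}^*X$, so that the dinatural leg indexed by $Y^*$ lands from $Y^* \boxtimes Y$ into $A'$, and that the ``$\eval$'' implicit in the definition of $\varepsilon^0$ is $\eval'_{Y^*} = \eval_Y$. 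Once these identifications are made, the verification is the standard triangle identity for the dual pair $(Y^*, Y)$.
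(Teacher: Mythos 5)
Your argument is correct and follows the same route as the paper: the counit is read off directly from \eqref{eq:coend-adj-2} with $f=\id$, and the unit is identified by checking that applying \eqref{eq:coend-adj-1} to the proposed $\eta_{X\boxtimes Y}$ gives $\id_{X\otimes Y}$, which reduces (via the defining property of $\varepsilon^0$ on $Y^*$, the identification $\eval'_{Y^*}=\eval_Y$, and ${}^*(Y^*)=Y$) to the triangle identity for the dual pair. The paper simply states that this last equation "is easily verified"; you have spelled out that verification, and your extra remarks on uniqueness via pure tensors are correct but standard.
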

\begin{proof}
  It is obvious from \eqref{eq:coend-adj-2} that the counit is given as stated. We note that the unit of the adjunction is the morphism corresponding to the identity via
  \begin{equation*}
    \Hom_{\mathcal{C}^{\env}}(M, \iHom(\unitobj, M \catactl \unitobj))
    \xrightarrow{\text{\quad \eqref{eq:coend-adj-1} \quad}}
    \Hom_{\mathcal{C}}(M \catactl \unitobj, M \catactl \unitobj).
  \end{equation*}
  Thus the description of the unit follows from the equation
  \begin{equation*}
    \id_{X \otimes Y} = \Big( X \otimes Y
    \xrightarrow{\quad \eta_{X \boxtimes Y} \catactl \unitobj \quad}
    X \otimes Y \otimes (A \catactl \unitobj)
    \xrightarrow{\quad \id \otimes \epsilon \quad} X \otimes Y \Big),
  \end{equation*}
  which is easily verified.
\end{proof}

By Lemma~\ref{lem:mod-func-adj}, the left $\mathcal{C}^{\env}$-module structure of $\iHom(\unitobj, -)$ is given by
\begin{equation*}
  \iHom(\unitobj, \id_M \catactl \varepsilon_V)
  \circ \eta_{M \otimes \iHom(\unitobj, V)}:
  M \otimes \iHom(\unitobj, V) \to \iHom(\unitobj, M \catactl V)
\end{equation*}
for $M \in \mathcal{C}^{\env}$ and $V \in \mathcal{C}$. The morphism $\tau_X$ for $X \in \mathcal{C}$ is the case where $V = \unitobj$ and $M = \unitobj \boxtimes X$. Thus, by a straightforward computation, we have:

\begin{lemma}
  \label{append:lem:tau}
  The morphism $\tau_X$ is characterized to be the unique morphism in $\mathcal{C}^{\env}$ such that the following equation holds:
  \begin{equation}
    \label{eq:append-tau}
    \tau_X (\id_{\unitobj \boxtimes X} \otimes \din_Y)
    = (\id_{X \boxtimes \unitobj} \otimes \din_{X^* \otimes Y})
    (\coev_X \boxtimes \id_{X}) \otimes \id_{Y \boxtimes {}^* Y}
    \quad (Y \in \mathcal{C}).
  \end{equation}
\end{lemma}

This means that $\tau_X$ is equal to the isomorphism
\begin{equation*}
  (\unitobj \boxtimes X) \otimes A
  = \int^{Y \in \mathcal{C}} Y \boxtimes {}^* (X^* \otimes Y)
  \cong
  \int^{Y \in \mathcal{C}} (X \otimes Y) \boxtimes {}^* Y
  = (X \boxtimes \unitobj) \otimes A
\end{equation*}
obtained by applying Lemma~\ref{lem:coend-adj} to the adjunction $X^* \otimes \id_{\mathcal{C}} \dashv X \otimes \id_{\mathcal{C}}$. By remarks succeeding that lemma, the inverse of $\tau_X$ is characterized by the equation
\begin{equation}
  \label{eq:append-tau-inverse}
  \tau_X^{-1} (\id_{X \boxtimes \unitobj} \otimes \din_Y)
  = (\id_{\unitobj \boxtimes X} \otimes \din_{X \otimes Y})
  (\id_{X} \boxtimes \coev_{{}^*\!X})
  \id_{Y \boxtimes {}^* Y}
  \quad (Y \in \mathcal{C}).
\end{equation}

\subsection{Proof of Lemma~\ref{lem:Radford-iso-ENO}}

We make $(\modobj \boxtimes \unitobj) \otimes A$ a left $A^{**}$-module by
\begin{equation}
  \label{append:eq:rho-tilde}
  \widetilde{\rho} = \Big( A^{**} \otimes (\modobj \boxtimes \unitobj) \otimes A
  \xrightarrow{\ \id \otimes \phi \ } A^{**} \otimes A^*
  \xrightarrow{\ m^{\ddag} \ } A^*
  \xrightarrow{\ \phi^{-1} \ } (\modobj \boxtimes \unitobj) \otimes A \Big)
\end{equation}
so that $\phi: (\modobj \boxtimes \unitobj) \otimes A \to A^*$ is an isomorphism of $A^{**}$-$A$-bimodules.
Since the functor $\Kappa: \mathcal{C} \to (\mathcal{C}^{\env})_A$ is an equivalence of left $\mathcal{C}^{\env}$-module categories, and since $\modobj$ is defined by $\Kappa(\modobj) \cong A^*$, the object $\modobj$ is an $A^{**}$-module in $\mathcal{C}$.
Let $\rho$ be the action of $A^{**}$ on $\modobj$. If we fix a quasi-inverse $\overline{\Kappa}$ of $\Kappa$, then $\rho$ is given by
\begin{equation*}
  \rho = \Big(A^{**} \catactl \modobj
  \xrightarrow{\ \cong\ } A^{**} \catactl \overline{\Kappa} \Kappa(\modobj)
  \xrightarrow{\ \cong\ } \overline{\Kappa}(A^{**} \otimes \Kappa(\modobj))
  \xrightarrow{\ \overline{\Kappa}(\widetilde{\rho}) \ } \overline{\Kappa}\Kappa(\modobj)
  \xrightarrow{\ \cong\ } \modobj \Big).
\end{equation*}
The Radford isomorphism $\delta$ is given by
\begin{equation*}
  \renewcommand{\xarrlen}{6em}
  \begin{aligned}
    \delta_X = \Big(\modobj \otimes X
    \xarr{\coev \otimes \id \otimes \id} \,
    & X^{*4} \otimes X^{*5} \otimes \modobj \otimes X \\[-3pt]
    = \, & X^{*4} \otimes ((X^{***} \boxtimes X^{**})^{**} \catactl \modobj) \\
    \xarr{\id \otimes (\din_{X^{***}}^{**} \catactl \modobj)} \,
    & X^{*4} \otimes (A^{**} \catactl \modobj)
    \xarr{\id \otimes \rho} X^{*4} \otimes \modobj \Big).
  \end{aligned}
\end{equation*}
for $X \in \mathcal{C}$. We now prove Lemma~\ref{lem:Radford-iso-ENO}, which states $\delta = \delta^{\mathrm{ENO}}$ in our notation.

\begin{proof}[Proof of Lemma~\ref{lem:Radford-iso-ENO}]
  For $X \in \mathcal{C}$, we set
  \begin{equation}
    \label{append:eq:zeta-def}
    \zeta_X = (\id_{X \boxtimes \unitobj} \otimes \din_{X^*}) \circ (\coev_{X} \boxtimes \id_X).
  \end{equation}
  Then, by \eqref{eq:append-tau}, we have
  \begin{equation}
    \label{append:eq:tau-tilde-2}
    \tau_X = m \circ (\zeta_X \otimes \id_A)
  \end{equation}
  We consider the following diagram:
  \begin{equation*}
    \begin{tikzcd}[column sep = 8em]
      (\unitobj \boxtimes X^{**}) \otimes A^{**}
      \arrow[d, "\zeta_X^{**} \otimes \id"']
      \arrow[r, "\id \otimes \phi^*"]
      & (\unitobj \boxtimes X^{**}) \otimes A^* \otimes (\modobj^* \boxtimes \unitobj)
      \arrow[d, "\zeta_X^{**} \otimes \id"] \\
      (X^{**} \boxtimes \unitobj) \otimes A^{**} \otimes A^{**}
      \arrow[d, "\id \otimes m^{**}"']
      \arrow[r, "\id \otimes \id \otimes \phi^*"]
      & (X^{**} \boxtimes \unitobj) \otimes A^{**} \otimes A^* \otimes (\modobj^* \boxtimes \unitobj)
      \arrow[d, "\id \otimes m^{\ddag} \otimes \id"] \\
      (X^{**} \boxtimes \unitobj) \otimes A^{**}
      \arrow[r, "\id \otimes \phi^*"]
      & (X^{**} \boxtimes \unitobj) \otimes A^* \otimes (\modobj^* \boxtimes \unitobj).
    \end{tikzcd}
  \end{equation*}
  Since $\phi: (\modobj \boxtimes \unitobj) \otimes A \to A^*$ is a morphism of right $A$-modules by definition, $\phi^*$ is a morphism of left $A^{**}$-modules. Hence we have
  \begin{equation}
    \label{eq:phi-A-linear}
    \phi^* \circ m^{**} = (m^{\ddag} \otimes \id_{\modobj^* \boxtimes \unitobj}) \circ (\id_{A^{**}} \otimes \phi^*),
  \end{equation}
  which implies that the above diagram commutes. By~\eqref{append:eq:tau-tilde-2}, the composition along the first row of the diagram is equal to $\tau_X^{**}$. Thus,
  \begin{align*}
    \widetilde{\tau}_X \otimes \id_{\modobj^* \boxtimes \unitobj}
    & = \begin{aligned}[t]
      & (\id_{X^{*4} \boxtimes \unitobj} \otimes \phi^{-1} \otimes \id_{\modobj^* \boxtimes \unitobj})
      \circ (\id_{X^{*4} \boxtimes \unitobj} \otimes \phi^{*}) \\
      & \qquad \circ (\tau_{X^{**}})^{**}
      \circ (\id_{\unitobj \boxtimes X} \otimes (\phi^{*})^{-1})
      \circ (\id_{\unitobj \boxtimes X} \otimes \phi \otimes \id_{\modobj^* \boxtimes \unitobj}) \\
    \end{aligned} \\
    & = (\id_{X^{*4} \boxtimes \unitobj} \otimes \phi^{-1} m^{\ddag} \otimes \id_{\modobj^* \boxtimes \unitobj})
      \circ (\zeta_{X^{**}}^{**} \otimes \phi \otimes \id_{\modobj^* \boxtimes \unitobj}).
  \end{align*}
  Since $\modobj^* \boxtimes \unitobj \in \mathcal{C}^{\env}$ is an invertible object, we have
  \begin{equation}
    \label{append:eq:tau-tilde-3}
    \begin{aligned}
    \widetilde{\tau}_X
    & = (\id_{X^{*4} \boxtimes \unitobj} \otimes \phi^{-1} m^{\ddag})
    \circ (\zeta_{X^{**}}^{**} \otimes \phi) \\
    & = (\id_{X^{*4} \boxtimes \unitobj} \otimes \widetilde{\rho})
    \circ (\zeta_{X^{**}}^{**} \otimes \id_{\modobj \boxtimes \unitobj} \otimes \id_A).      
    \end{aligned}
  \end{equation}
  Now we consider the commutative diagram
  \begin{equation}
    \label{append:eq:proof-thm-1}
    \begin{tikzcd}[column sep = 4em]
      (\unitobj \boxtimes X) \otimes \Kappa(\modobj)
      \arrow[d, "(\coev \boxtimes \id) \otimes \id"']
      \arrow[r, "\xi_{\unitobj \boxtimes X, \modobj}"]
      & \Kappa(\modobj \otimes X)
      \arrow[d, "\Kappa(\coev \otimes \id \otimes \id)"] \\
      (X^{*4} \boxtimes \unitobj) \otimes (X^{*5} \boxtimes X) \otimes \Kappa(\modobj)
      \arrow[d, "\id \otimes i_{X^{***}}^{**} \otimes \id"']
      \arrow[r, "\xi"]
      & \Kappa(X^{*4} \otimes X^{*5} \otimes \modobj \otimes X)
      \arrow[d, "\Kappa(\id \otimes (i_{X^{***}}^{**} \catactl \id)"] \\
      (X^{*4} \boxtimes \unitobj) \otimes A^{**} \otimes \Kappa(\modobj)
      \arrow[d, "\id \otimes \widetilde{\rho}"']
      \arrow[r, "\xi"]
      & \Kappa(X^{*4} \otimes (A^{**} \catactl \modobj))
      \arrow[d, "\Kappa(\id \otimes \rho)"] \\
      (X^{*4} \boxtimes \unitobj) \otimes \Kappa(\modobj)
      \arrow[r, "\xi_{(X^{*4} \boxtimes \unitobj), \modobj}"]
      & \Kappa(X^{*4} \otimes \modobj)
    \end{tikzcd}
  \end{equation}
  where $\xi_{M,V} : M \otimes \Kappa(V) \to \Kappa(M \catactl V)$ ($M \in \mathcal{C}^{\env}$, $V \in \mathcal{C}$) is the structure morphisms of $\Kappa$ as a left $\mathcal{C}^{\env}$-module functor. Note, by definition,
  \begin{equation*}
    \xi_{X \boxtimes \unitobj, Y} = \id_{\Kappa(X \otimes Y)}, \quad
    \xi_{\unitobj \boxtimes X, Y} = \id_{Y \boxtimes \unitobj} \otimes \tau_X
    \quad (X, Y \in \mathcal{C}).
  \end{equation*}
  The first row of the diagram is equal to $\widetilde{\tau}_X$ by \eqref{append:eq:zeta-def} and \eqref{append:eq:tau-tilde-3}, while the second row is $\Kappa(\delta_X)$ by definition. Hence we have
  \begin{equation*}
    \Kappa(\delta_X)
    = \xi_{X^{*4} \boxtimes \unitobj, D} \circ \widetilde{\tau}_X \circ (\xi_{\unitobj \boxtimes X, D})^{-1}
    = \Kappa(\deltaENO_X)
  \end{equation*}
  for all $X \in \mathcal{C}$. Since $\Kappa$ is an equivalence, $\delta = \deltaENO$. The proof is done.
\end{proof}

\subsection{The braided case}
\label{append:subsec:braided-case}

Let $\mathcal{C}$ be a braided finite tensor category with braiding $\sigma$. For $X \in \mathcal{C}$, we define the isomorphism $w_X: X \to X^{****}$ by
\begin{equation}
  \label{eq:append-def-w}
  w_X = u_X^{**} \circ (u_{\, {}^* \! X}^{*})^{-1}
  = (u_{X^*}^{*})^{-1} \circ u_X,
\end{equation}
where $u$ is the Drinfeld isomorphism given in \eqref{eq:Dri-iso-def}. Then the Radford isomorphism has the following expression:

\begin{theorem}
  \label{append:thm:Rad-iso-br}
  For all $X \in \mathcal{C}$, we have
  \begin{equation*}
    \renewcommand{\xarrlen}{3em}
    \delta_X = \Big( \modobj \otimes X
    \xarr{\sigma_{\modobj, X}} X \otimes \modobj
    \xarr{w_X \otimes \id} X^{*4} \otimes \modobj \Big),
  \end{equation*}
  where $\modobj = \modobj_{\mathcal{C}}$ is the distinguished invertible object of $\mathcal{C}$.
\end{theorem}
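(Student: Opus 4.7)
Set $\widetilde{\delta}_X := (w_X \otimes \id_\alpha) \circ \sigma_{\alpha, X}$. By Theorem~\ref{append:thm:deltaeno} it suffices to show $\deltaENO_X = \widetilde{\delta}_X$. I would first verify that $(\alpha, \widetilde{\delta})$ is a well-defined object of $\mathcal{Z}(\id_\mathcal{C}, \Dual^4_\mathcal{C})$: the hexagon axiom for $\widetilde{\delta}$ reduces, via the hexagon for $\sigma$ and the naturality of $\sigma$, to a matching multiplicativity identity
\[
w_{X \otimes Y} \circ (\sigma_{Y,X}\sigma_{X,Y} \otimes \id) = w_X \otimes w_Y,
\]
which itself follows from two applications of~\eqref{eq:Dri-iso-tensor} to $u_X \otimes u_Y$ and $u_{\,^*\!Y} \otimes u_{\,^*\!X}$.

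For the equality of the two morphisms, my strategy is to evaluate both through the equivalence $\Kappa: \mathcal{C} \to (\mathcal{C}^{\env})_A$. By the definition of $\deltaENO_X$,
\[
\Kappa(\deltaENO_X) = \widetilde{\tau}_X \circ (\id_{\alpha \boxtimes \unitobj} \otimes \tau_X^{-1}),
\]
while $\Kappa(\widetilde{\delta}_X) = (\widetilde{\delta}_X \boxtimes \id_{\unitobj}) \otimes \id_A$. I would unfold each factor using the braided structure of $\mathcal{C}$: the description of $\tau_X^{-1}$ in Remark~\ref{append:rem:tau-inverse} together with a coend manipulation exhibits it, in the braided case, as the morphism induced by the single braiding $\sigma_{\alpha, X}$ (after coevaluation/evaluation cancellation). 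The morphism $\widetilde{\tau}_X$, built from $\tau_{X^{**}}^{**}$ through the defining square for $\varphi$, decomposes by Remark~\ref{rem:coend-dual-shift} and the explicit formula
\[
u_V = (\eval_V \otimes \id_{V^{**}}) \circ (\sigma_{V, V^*} \otimes \id_{V^{**}}) \circ (\id_V \otimes \coev_{V^*})
\]
into two ``Drinfeld-like'' pieces whose composition equals $u_X^{**} \circ (u_{\,^*\!X}^{*})^{-1} = w_X$. Assembling these two computations yields $\Kappa(\deltaENO_X) = \Kappa(\widetilde{\delta}_X)$ and hence the claimed equality.

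\textbf{Main obstacle.} The core difficulty lies in the bookkeeping around the double-dual shifts. The morphism $\widetilde{\tau}_X$ is defined through $\tau_{X^{**}}^{**}$ (a triple duality shift) and involves $\phi$ and $\phi^*$; correctly identifying the resulting composition with $w_X$ (rather than with a conjugate or dual of it) requires juggling both factorizations $w_X = u_X^{**}\circ (u_{\,^*\!X}^{*})^{-1} = (u_{X^*}^{*})^{-1}\circ u_X^{**}$ given in the definition preceding the theorem, and repeatedly invoking the snake identities for duality together with the identification of the coend $A$ under the dual shifts of Remark~\ref{rem:coend-dual-shift}. Once this bookkeeping is organized, the remaining verification is a diagram chase using only the hexagon for $\sigma$, the naturality of $\sigma$ and the duality, and the defining formula for $u$.
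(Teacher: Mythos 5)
You take a genuinely different organizational route than the paper: you propose to unfold the ENO formula $\Kappa(\deltaENO_X) = \widetilde{\tau}_X \circ (\id \otimes \tau_X^{-1})$ directly, whereas the paper works from the Radford-object definition via the $A^{**}$-action $\widetilde{\rho}$ on $\Kappa(\modobj)$, together with two dedicated lemmas (the Drinfeld isomorphism of $\mathcal{C}^{\env}$, and the braided commutativity of $A$). Both routes pass through $\Kappa$, so your starting point is not misguided; but as written there are two genuine gaps.

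First, you never invoke the commutativity of $A$ in $\mathcal{C}^{\env}$, $m \circ \widetilde{\sigma}_{A,A} = m$. This is the structural input that lets one rewrite the $A$-module data so that exactly one crossing with $\modobj \boxtimes \unitobj$ survives and the rest cancels. Your plan hopes to extract $\sigma_{\alpha,X}$ from $\tau_X^{-1}$ ``after coevaluation/evaluation cancellation,'' but the description of $\tau_X^{-1}$ in Remark~\ref{append:rem:tau-inverse} is built entirely from (co)evaluations and contains no braiding at all; the braiding has to be introduced elsewhere, and the commutativity of $A$ is what makes that rewriting possible. Without it, or some explicit replacement, the cancellation you gesture at will not close up.

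Second, and more fundamentally: the natural unfolding does not produce $\sigma_{\modobj,X}$. The paper's calculation first lands on $\delta_X = (w_X \otimes \id_{\modobj}) \circ \sigma_{X,\modobj}^{-1}$, i.e.\ with an \emph{inverse} crossing, and only then proves separately that $\modobj$ is transparent, $\sigma_{X,\modobj} = \sigma_{\modobj,X}^{-1}$, using the preliminary formula and an argument in the style of \cite[Cor.~8.10.8]{MR3242743}. Your proposal has no counterpart to this step and tacitly asserts that the positive crossing appears directly; that is precisely what must be proved, and it is not a bookkeeping matter about double duals.

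Two minor points. Step 1 (checking that $(\alpha,\widetilde\delta)$ lies in $\mathcal{Z}(\id_{\mathcal{C}},\Dual_{\mathcal{C}}^4)$) is a sanity check but is not needed for an equality of morphisms. Also, the multiplicativity identity you write down is off: the hexagon condition for $\widetilde{\delta}$ reduces to $w_{X\otimes Y} = w_X \otimes w_Y$, with no residual double braiding, because the inverse double braidings coming from~\eqref{eq:Dri-iso-tensor} (applied to $u^{**}$ and to $(u_{{}^*(-)}^*)^{-1}$) cancel each other after commuting past the naturality square.
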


This theorem is a generalization of Radford's result on the distinguished grouplike elements of finite-dimensional quasitriangular Hopf algebras \cite{MR1182011}, and has been proved in \cite{MR2097289} in the unimodular case and in \cite[Theorem 8.10.7]{MR3242743} in the general case. In view of the importance of this formula, we give an alternative proof from the viewpoint of the equivalence ${}_{A^{**}}(\mathcal{C}^{\env})_A \approx \mathcal{Z}(\id_{\mathcal{C}}, S^4)$ used to define the Radford object.

At first, we work in a general setting. Let $\mathcal{B}$ be a braided rigid monoidal category with braiding $\sigma$, and let $A$ be an algebra in $\mathcal{B}$ with multiplication $m$. Then $A^*$ is an $A^{**}$-$A^*$-bimodule in $\mathcal{B}$. As in Section~\ref{sec:relative-modulus}, we denote by $m^{\ddag} : A^{**} \otimes A^* \to A^*$ and $m^{\dagger} : A^* \otimes A \to A^*$ the left and the right action on $A^{*}$, respectively. We suppose that $A$ is commutative, that is, the equation $m \circ \sigma_{A,A} = m$ holds. Then there is the following relation between $m^{\ddag}$ and $m^{\dagger}$.

\begin{lemma}
  \label{lem:append-braided-m-dagger-lemma}
  With above notation, we have $m^{\ddag} \circ (u_A \otimes \id_A) = m^{\dagger} \circ \sigma_{A,A^*}$.
\end{lemma}
\begin{proof}
  The commutativity of $A$ implies
  \begin{equation}
    \label{append:eq:commutative}
    \sigma_{A^*,A^*} \circ m^* = (\sigma_{A,A})^* \circ m^* = (m \circ \sigma_{A,A})^* = m^*.
  \end{equation}
  The claim is verified with the help of string diagrams as follows:
  \begin{equation*}
    \tikzset{cross/.style={preaction={-,draw=white,line width=6pt}}}
    m^{\ddag} \circ (u_A \otimes \id_{A^*})
    =
    \begin{tikzpicture}[x = 1pc, y = 1pc, baseline=0]
      \coordinate (S1) at (-2,2); \node at (S1) [above] {$A$};
      \node (B1) at (0,1.25) [draw, below] {\makebox[2em]{$\phantom{m^*}$}};
      \draw let \p1 = (B1.north) in (\p1) -- (\x1, 2)
      node [above] {$A^*$};
      \draw let \p1 = (S1), \p2 = ([shift={(-.5,0)}]B1.south)
      in (\p1) -- (\x1, \y2) coordinate (Q1)
      to [out=-90, in=90, looseness=1] ($(\x2, \y2)-(0,1.5)$) coordinate (Q2);
      \draw [cross] let \p1 = (Q1), \p2 = (Q2)
      in (Q2) to [out=-90, in=-90, looseness=1.5] (\x1, \y2)
      to [out=90, in=-90] ([shift={(-.5,0)}]B1.south);
      \draw let \p1 = ([shift={(+.5,0)}]B1.south)
      in (\p1) -- (\x1, -2) node [below] {$A^{*}$};
      \node (B1) at (0,1.25) [draw, below] {\makebox[2em]{$m^*$}};
    \end{tikzpicture}
    =
    \begin{tikzpicture}[x = 1pc, y = 1pc, baseline=0]
      \coordinate (S1) at (0,2); \node at (S1) [above] {$A$};
      \coordinate (S2) at (1.75,2); \node at (S2) [above] {$A^*$};
      \node (B1) at (0,.5) [below] {\makebox[2em]{$\phantom{m^*}$}};
      \draw let \p1 = (S1), \p2 = (S2), \p3 = (B1.north), \p4 = (B1.south)
      in (\p1) to [out=-90, in=90, looseness=1] (\x2, \y3) -- (\x2, \y4) coordinate (Q1);
      \draw [cross] (S2) to [out=-90, in=90, looseness=1] (B1.north);
      \draw ([shift={(-.5,0)}]B1.south) coordinate (Q2)
      to [out=-90, in=-90, looseness=1.75] (Q1);
      \draw [cross] let \p1 = ([shift={(+.5,0)}]B1.south), \p2 = (Q2)
      in (\p1) to [out=-90, in=90] ($(\x2, -2)+(-.25,0)$) node [below] {$A^*$};
      \node (B1) at (0,.5) [draw, below] {\makebox[2em]{$m^*$}};
    \end{tikzpicture}
    \!\!\! \mathop{=}^{\eqref{append:eq:commutative}}
    \begin{tikzpicture}[x = 1pc, y = 1pc, baseline=0]
      \coordinate (S1) at (0,2); \node at (S1) [above] {$A$};
      \coordinate (S2) at (1.75,2); \node at (S2) [above] {$A^*$};
      \node (B1) at (0,.5) [below] {\makebox[2em]{$\phantom{m^*}$}};
      \draw let \p1 = (S1), \p2 = (S2), \p3 = (B1.north), \p4 = (B1.south)
      in (\p1) to [out=-90, in=90, looseness=1] (\x2, \y3) -- (\x2, \y4) coordinate (Q1);
      \draw [cross] (S2) to [out=-90, in=90, looseness=1] (B1.north);
      \draw ([shift={(.5,0)}]B1.south) coordinate (Q2)
      to [out=-90, in=-90, looseness=2] (Q1);
      \draw let \p1 = ([shift={(-.5,0)}]B1.south)
      in (\p1) to [out=-90, in=90] (\x1, -2) node [below] {$A^*$};
      \node (B1) at (0,.5) [draw, below] {\makebox[2em]{$m^*$}};
    \end{tikzpicture}
    = m^{\dagger} \circ \sigma_{A,A^*}. \qedhere
  \end{equation*}
\end{proof}

Suppose that there is an object $X \in \mathcal{B}$ and an isomorphism $\phi : X \otimes A \to A^*$ of right $A$-modules in $\mathcal{B}$. We make the object $X \otimes A$ an $A^{**}$-$A$-bimodule in $\mathcal{B}$ in such a way that $\phi$ is an isomorphism of left $A^{**}$-modules.

\begin{lemma}
  \label{lem:append-rho-tilde}
  The left action $\lambda : A^{**} \otimes X \otimes A \to X \otimes A$ is given by
  \begin{equation*}
    \lambda = (\id_{X} \otimes m) \circ (\sigma_{A, X} \otimes \id_A) \circ (u^{-1}_A \otimes \id_{X} \otimes \id_A).    
  \end{equation*}
\end{lemma}
\begin{proof}
  By definition, we have $\lambda = \phi^{-1} \circ m^{\ddag} \circ (\id_{A^{**}} \otimes \phi)$. Thus,
  \begin{align*}
    \lambda \circ (u_A \otimes \id_{A^*})
    & = \phi^{-1} \circ m^{\dagger} \circ \sigma_{A,A^*} \circ (\id_{A} \otimes \phi) \\
    & = (\id_{X} \otimes m) \circ (\phi^{-1} \otimes \id_A) \circ \sigma_{A,A^*} \circ (\id_{A} \otimes \phi) \\
    & = (\id_{X} \otimes m) \circ \sigma_{A, X \otimes A}
      \circ (\id_{A} \otimes \phi^{-1}) \circ (\id_{A} \otimes \phi) \\
    & = (\id_{X} \otimes m) \circ (\id_{X} \otimes \sigma_{A,A})
      \circ (\sigma_{A, X} \otimes \id_A) \\
    & = (\id_{X} \otimes m) \circ (\sigma_{A, X} \otimes \id_A).
  \end{align*}
  Here,
  the first equality follows from Lemma~\ref{lem:append-braided-m-dagger-lemma},
  the second from that $\phi : A^* \to X \otimes A$ is a morphism of right $A$-modules,
  and the last from the commutativity of $A$. The proof is done.
\end{proof}

We apply the above result for algebras in a braided rigid monoidal category to the canonical algebra $A \in \mathcal{C}^{\env}$ as follows:
We first endow $\mathcal{C}^{\env}$ with the braiding $\widetilde{\sigma}$ determined by
$\widetilde{\sigma}_{X \boxtimes X', Y \boxtimes Y'} = \sigma_{X,Y} \boxtimes \sigma_{X',Y'}^{-1}$ ($X, Y, X', Y' \in \mathcal{C}$).
Then the canonical algebra $A \in \mathcal{C}^{\env}$ is commutative. Indeed, we have
\begin{gather*}
  m \widetilde{\sigma}_{A,A} (\din_X \otimes \din_Y)
  = m (\din_Y \otimes \din_X) \widetilde{\sigma}_{X \boxtimes {}^* \! X, Y \boxtimes {}^* \! Y}
  \mathop{=}^{\eqref{eq:cano-alg-mult}} \din_{Y \otimes X}  (\sigma_{X,Y} \boxtimes {}^*(\sigma_{X, Y}^{-1})) \\
  = \din_{X \otimes Y}  (\sigma_{X,Y}^{-1} \sigma_{X,Y} \boxtimes \id_{\, {}^*(X \otimes Y)})
  = \din_{X \otimes Y} \mathop{=}^{\eqref{eq:cano-alg-mult}} m (\din_X \otimes \din_Y)
\end{gather*}
for all objects $X, Y \in \mathcal{C}$.
We denote by $\widetilde{u}$ the Drinfeld isomorphism of $\mathcal{C}^{\env}$.
Then, by the above lemma, the action $\widetilde{\rho} : A^{**} \otimes \Kappa(\modobj) \to \Kappa(\modobj)$ is given by
\begin{equation*}
  \widetilde{\rho} = (\id_{\modobj \boxtimes \unitobj} \otimes m) \circ (\sigma_{A, \modobj \boxtimes \unitobj} \otimes \id_A) \circ (\widetilde{u}^{-1}_A \otimes \id_{\modobj \boxtimes \unitobj} \otimes \id_A).    
\end{equation*}
The Radford isomorphism is induced by $\widetilde{\rho}$.
Theorem~\ref{append:thm:Rad-iso-br} is now proved by computing the Radford isomorphism with the expression of $\widetilde{\rho}$ given in the above, although a bit lengthy computation will be required for checking it.

\begin{proof}[Proof of Theorem~\ref{append:thm:Rad-iso-br}]
  It is straightforward to verify
  \begin{equation}
    \label{eq:append-u-tilde}
    \widetilde{u}_{X \boxtimes Y} = u_X \boxtimes {}^{**}(u_{Y}^{-1})
    \quad (X, Y \in \mathcal{C}).
  \end{equation}
  By the (di)naturality of $\widetilde{u}$ and $\din$, we have
  \begin{gather*}
    \widetilde{u}^{-1}_A \circ \din_{V^{***}}^{**}
    = \din_{V^{***}} \circ \widetilde{u}^{-1}_{V^{***} \boxtimes V^{**}}
    \mathop{=}^{\eqref{eq:append-u-tilde}} \din_{V^{***}} \circ (u_{V^{***}}^{-1} \boxtimes {}^{**}u_{V^{**}}^{}) \\
    = \din_{V^{*}} \circ ({}^* u_{V^{**}}^{} u_{V^{***}}^{-1} \boxtimes \id_{V})
    \mathop{=}^{\eqref{eq:append-def-w}} \din_{V^{*}} \circ (w_V^* \boxtimes \id_{V})
  \end{gather*}
  for all $V \in \mathcal{C}$. Hence, by Lemma \ref{lem:append-rho-tilde}, we have
  \begin{align*}
    & \widetilde{\rho} \circ (\din_{V^{***}}^{**} \otimes \id_{\modobj \boxtimes \unitobj} \otimes \din_W) \\
    & = (\id_{\modobj \boxtimes \unitobj} \otimes m) \circ (\sigma_{A, \modobj \boxtimes \unitobj} \otimes \id_A)
      \circ (\widetilde{u}^{-1}_A \din_{V^{***}}^{**} \otimes \id_{\modobj \boxtimes \unitobj} \otimes \din_W) \\
    & = (\id_{\modobj \boxtimes \unitobj} \otimes m) \circ (\sigma_{A, \modobj \boxtimes \unitobj} \otimes \id_A)
      \circ (\din_{V^{*}} (w_V^* \boxtimes \id_V) \otimes \id_{\modobj \boxtimes \unitobj} \otimes \din_W) \\
    & = (\id_{\modobj \boxtimes \unitobj} \otimes m (\din_{V^*} \otimes \din_W))
      \circ (\sigma_{V^* \boxtimes V, \modobj \boxtimes \unitobj}((w_V^* \boxtimes \id_V) \otimes \id_{\modobj \boxtimes \unitobj})
      \otimes \id_{W \boxtimes {}^*W}) \\
    & = (\id_{\modobj \boxtimes \unitobj} \otimes \din_{V^* \otimes W})
      \circ ((\sigma_{V^*, \modobj}(w_V^* \otimes \id_{\modobj}) \boxtimes \id_V) \otimes \id_{W \boxtimes {}^*W})
  \end{align*}
  for all $V, W \in \mathcal{C}$.
  Thus the diagram given as Figure~\ref{fig:proof-Rad-iso-br} commutes. By the dinaturality of $\din$ and $\coev_{{}^*\!X} = {}^*(\eval_X)$, we have
  \begin{equation*}
    \din_{X^* \otimes X \otimes Y}
    \circ (\id_{X^* \otimes X \otimes Y} \otimes (\id_{{}^*Y} \otimes \coev_{{}^*\!X}))
    = \din_{Y}
    \circ (\eval_X \otimes \id_{Y}) \otimes \id_{{}^*Y}).
  \end{equation*}
  Thus, by the commutative diagram, we see that $\Kappa(\delta_X)$ is equal to
  \begin{equation*}
    \Kappa\Big((\id_{X^{*4}} \otimes \eval_X) (\id_{X^{*4}} \otimes \sigma_{X^*,\modobj}) (\id_{X^{*4}} \otimes w_X^* \otimes \id_{\modobj})(\coev_{X^{*4}} \otimes \id_{\modobj})\Big).
  \end{equation*}
  for all $X \in \mathcal{C}$. Since the functor $\Kappa$ is an equivalence, we have
  \begin{gather*}
    \delta_X
    = (\id_{X^{*4}} \otimes \eval_X) (\id_{X^{*4}} \otimes \sigma_{X^*,\modobj}) (\id_{X^*4} \otimes w_X^* \otimes \id_{\modobj})(\coev_{X^{*4}} \otimes \id_{\modobj}) \\
    = (w_X \otimes \eval_X) (\id_{X} \otimes \sigma_{X^*,\modobj}) (\coev_{X} \otimes \id_{\modobj})
    = (w_X \otimes \id_{\modobj}) \circ \sigma_{\modobj, X}^{-1}.
  \end{gather*}
  This formula is different from the claim of this theorem. Nevertheless, by using this formula, one can prove $\sigma_{X, \modobj} = \sigma_{\modobj, X}^{-1}$ ($X \in \mathcal{C}$) in the same way as \cite[Corollary 8.10.8]{MR3242743}. The proof is done.
\end{proof}

\begin{figure}
  \centering
  \makebox[\textwidth][c]{
    \begin{tikzcd}[ampersand replacement=\&, column sep = 0em, row sep = 2em]
      \& [-2em] 
      (\modobj \boxtimes \unitobj) \otimes (X \boxtimes \unitobj) \otimes (Y \boxtimes {}^*Y)
      \arrow[rd, phantom, "\scriptstyle \text{($\circlearrowright$ by \eqref{eq:append-tau-inverse})}"]
      \arrow[r, equal]
      \arrow[d, "\id \otimes \id \otimes \din_Y"]
      \& [1em] 
      (\modobj \otimes X \otimes Y) \boxtimes {}^*Y
      \arrow[dd, "\id_{\modobj \otimes X \otimes Y} \boxtimes (\id_{{}^*Y} \otimes \coev_{{}^*\!X})"] \\
      \Kappa(\modobj \otimes X) \arrow[r, equal]
      \arrow[rd, phantom, "\scriptstyle \text{($\circlearrowright$ by \eqref{append:eq:proof-thm-1})}"]
      \arrow[dddd, pos = .25, "\Kappa(\delta_X)"]
      \& (\modobj \boxtimes \unitobj) \otimes (X \boxtimes \unitobj) \otimes A
      \arrow[d, "\id_{\modobj \boxtimes \unitobj} \otimes \tau_X^{-1}"]
      \& \mbox{} \\[-1.5em]
      \& (\unitobj \boxtimes X) \otimes (\modobj \boxtimes \unitobj) \otimes A
      \arrow[d, "(\coev_{X^{*4}} \boxtimes \id) \otimes \id \otimes \id"]
      \& \begin{gathered}[b]
        (\modobj \otimes X \otimes Y) \boxtimes ({}^*Y \otimes {}^* \!X \otimes X) = \\
        (\modobj \boxtimes X) \otimes ((X \otimes Y) \boxtimes {}^*(X \otimes Y))
      \end{gathered}
      \arrow[d, "((\coev_{X^{*4}} \otimes \id) \boxtimes \id) \otimes \id"]
      \arrow[l, "\id_{D \boxtimes X} \otimes \din_{X \otimes Y}"'] \\
      \& (X^{*4} \boxtimes \unitobj) \otimes (X^{*5} \boxtimes X) \otimes (D \boxtimes \unitobj) \otimes A
      \arrow[d, "\id \otimes \din_{X^{***}}^{**} \otimes \id \otimes \id"]
      \& ((X^{*4} \otimes X^{*5} \otimes \modobj) \boxtimes X) \otimes ((X \otimes Y) \boxtimes {}^*(X \otimes Y))
      \arrow[l, "\id \otimes \din_{X \otimes Y}\strut"]
      \arrow[dd, "((\id \otimes \sigma_{X^*,D}(w_X^* \otimes \id_D)) \boxtimes \id) \otimes \id"] \\
      \& (X^{*4} \boxtimes \unitobj) \otimes A^{**} \otimes (D \boxtimes \unitobj) \otimes A
      \arrow[d, "\id \otimes \widetilde{\rho}"] \\[-1.5em]
      \Kappa(X^{*4} \otimes D)
      \arrow[r, equal]
      \& (X^{*4} \boxtimes \unitobj) \otimes (D \boxtimes \unitobj) \otimes A
      \& \begin{gathered}[b]
        ((X^{*4} \otimes \modobj \otimes X^*) \boxtimes X) \otimes ((X \otimes Y) \boxtimes {}^*(X \otimes Y)) = \\
        ((X^{*4} \otimes \modobj) \boxtimes \unitobj)
        \otimes ((X^* \otimes X \otimes Y) \boxtimes {}^*(X^* \otimes X \otimes Y))
      \end{gathered}
      \arrow[l, "\id \otimes \din_{X^* \otimes X \otimes Y}\strut"]
    \end{tikzcd}}
  \label{fig:proof-Rad-iso-br}
  \caption{Proof of Theorem~\ref{append:thm:Rad-iso-br}}
\end{figure}

\def\cprime{$'$}

\end{document}